\documentclass[
    a4paper,
    11pt,
]{article}

\usepackage{amsmath,amssymb,amsfonts,amsthm}
\usepackage{mathtools,bm}
\usepackage{booktabs,enumitem,placeins}
\usepackage{color,graphicx}
\usepackage[margin=30mm,top=40mm]{geometry}
\usepackage[font=small]{caption}
\usepackage{cite}
\usepackage{url}
\usepackage{indentfirst}
\usepackage{hyperref}
\hypersetup{
  colorlinks=true,
  linkcolor=blue,
  citecolor=red,
  urlcolor=blue
}



\numberwithin{equation}{section}

\newtheorem{thm}{Theorem}[section]

\newtheorem{lem}[thm]{Lemma}
\newtheorem*{lem*}{Lemma}
\newtheorem{rem}[thm]{Remark}

\newcommand{\R}{\mathbb R}
\newcommand{\dd}{\,{\rm d}}
\newcommand{\id}{{\rm id}}
\newcommand{\nabs}{\nabla_{\!s}}
\newcommand{\Ds}{\Delta_s}
\newcommand{\divs}{\nabla_{\!s}\!\cdot}
\newcommand{\norm}[1]{\bigl\|#1\bigr\|}

\newcommand{\ipd}[1]{{\bigl(#1\bigr)}}

\newcommand{\E}{\mathcal E}
\newcommand{\Rays}{\mathcal R}
\newcommand{\Lag}{\mathcal L}

\newcommand{\B}{\mathcal B}
\newcommand{\Id}{{\rm Id}}
\newcommand{\D}{\mathcal D}
\newcommand{\vol}{\operatorname{vol}}
\newcommand{\bkap}{\overline{\varkappa}}

\newcommand{\ttau}{\Delta t}
\newcommand{\Gt}{{\Gamma(t)}}
\newcommand{\Gm}{{\Gamma^m}}
\newcommand{\rd}{{\rm d}}
\newcommand{\bR}{{\mathbb R}}
\newcommand{\dH}{\rd\mathcal{H}}
\newcommand{\bV}{{\mathbb V}}

\newcommand{\Example}[2]{%
  \par\vspace{0.5em}
  \noindent\textbf{Example #1: #2.}
  \par\vspace{0.5em}
}

\begin{document}

\title{A constrained Onsager variational framework for parametric approximations of Willmore and Helfrich flows}

\author{Quan Zhao\footnotemark[1]}

\renewcommand{\thefootnote}{\fnsymbol{footnote}}
\footnotetext[1]{School of Mathematical Sciences, University of Science and
Technology of China, 230026 Hefei, China\\
\texttt{\href{mailto:quanzhao@ustc.edu.cn}{quanzhao@ustc.edu.cn}}}

\date{}
\maketitle

\begin{abstract}
\noindent
We develop a constrained Onsager variational framework for parametric finite element approximations of Willmore and Helfrich flows.  The weak curvature relation is used as a PDE constraint to express the first variation of the bending energy in curvature-vector form.  Onsager's principle then combines this variation with dissipation based on the normal velocity, while an independent relaxed minimal-deformation-rate (relaxed-MDR) constraint selects the tangential velocity. The resulting mixed formulation admits a semidiscretization by continuous piecewise linear elements that satisfies an exact energy-dissipation law and exactly preserves area and volume when the corresponding constraints are imposed. The framework accommodates spatially varying spontaneous curvature and applies to both closed surfaces and open surfaces with fixed boundaries under Navier or clamped conditions.  We also propose a linearly implicit fully discrete scheme, prove its unique solvability, and present numerical experiments demonstrating convergence, energy decay, constraint preservation, and effective mesh redistribution. 
\end{abstract}

\noindent\textbf{Key words.} Willmore flow, Helfrich flow, parametric finite element method, constrained Onsager principle, normal--tangential separation, energy stability, spontaneous curvature, open surfaces.

\noindent\textbf{AMS subject classifications.} 65M60, 65M12, 35R01, 53C44

\setlength\parskip{1ex}
\renewcommand{\thefootnote}{\arabic{footnote}}
\setcounter{equation}{0}
\setlength\parindent{24pt}

\section{Introduction}\label{sec:introduction}

The Willmore and Helfrich energies are fundamental curvature functionals in
differential geometry and in models of elastic interfaces and biological
membranes.  The Willmore energy measures bending through the squared mean
curvature \cite{Willmore93}, whereas the Helfrich model introduces spontaneous
curvature and is commonly supplemented with area and volume constraints to
describe lipid bilayers, vesicles, and red blood cells
\cite{Canham1970minimum,Helfrich73elastic,Seifert97}.  The associated
$L^2$-gradient dynamics give rise to Willmore flow and, when both area and
volume are constrained, Helfrich flow.  These fourth-order, highly nonlinear
geometric flows pose substantial analytical and numerical challenges.  For
analytical results, see
\cite{Kuwert01willmore,DziukKS02,KS02,Simonett05willmore,Blatt09singular,PalmurellaR22,Rupp23volume,Schlierf25spont}. Numerical approximations include level-set
methods \cite{Droske04level}, phase-field methods
\cite{Du04phase,FrankenRW13,Bretin15phase,RumpfSS25preprint}, and
front-tracking or parametric finite element methods
\cite{Mayer02numerical,ClarenzDDRR04,Rusu05,Dziuk08,BGN08willmore,BONITO2010,pwfade,pwfopen,OlischlagerR09,Deckelnick09error,BalzaniR12,KovacsLL21,Duan2021high,BaoL25,GNZ26}. 
Preserving the underlying gradient-flow structure, in particular the dissipation of bending energy, is a central objective in the design of such methods.

In pioneering work, Dziuk \cite{Dziuk08} proposed a variational parametric finite element formulation of Willmore flow whose continuous-in-time semidiscretization satisfies a discrete energy-dissipation law. Building on Dziuk's work, Barrett, Garcke, and N\"urnberg introduced the BGN tangential motion within a PDE-constrained variational formulation, establishing semidiscrete energy stability for Helfrich-type models with spontaneous curvature and for open-surface boundary-value problems \cite{pwfade,pwfopen}. At the fully discrete level, energy stability has subsequently been pursued through different approaches. Within the framework of Dziuk's formulation, the temporal collocation method of \cite{Duan2021high} yields high-order energy-diminishing schemes, but leads to highly nonlinear algebraic systems. Minimizing-movement schemes \cite{OlischlagerR09,BalzaniR12}, on the other hand, inherit energy decay naturally at the time-discrete level, at the expense of solving a highly nonlinear minimization problem at each time step. More recently, energy-stable parametric schemes have been developed by treating the curvature as an additional evolving unknown \cite{BaoL25,GNZ25willmore,GNZ26}. Such curvature-evolving formulations may require additional regularity and careful treatment of nonsmooth initial geometries and long-time computations, which can pose challenges in simulations of complex evolving surfaces.

For a smooth evolving surface, only the normal component of the velocity changes the geometry, whereas the tangential component changes the surface parametrization.  This tangential freedom becomes crucial after spatial discretization.  For example, a purely normal approximation, such as Dziuk's method \cite{Dziuk08}, may lead to severe mesh distortion.  In contrast, the BGN
tangential motion induced by the curvature equation yields asymptotically equidistributed curves and conformal polyhedral surfaces in three dimensions \cite{BGN07,BGN08parametric,Barrett20}.  The minimal-deformation-rate (MDR) principle has subsequently motivated formulations that reduce mesh deformation and mitigate the small-time-step mesh instabilities of BGN discretizations
\cite{Hu22evolving,Gao26,Gao26dualMDR}.  Other choices include DeTurck-type
reparametrizations and uniformization strategies \cite{DeTurck17,Duan24new,PAN26}.  More fundamentally, the distinct roles of the two velocity components motivate a normal--tangential velocity-splitting strategy, in which the normal velocity realizes the geometric gradient flow,
whereas the tangential velocity is selected independently for mesh
redistribution.  Such a separation has recently been used in an energy-stable
curvature-evolution formulation for Willmore flow \cite{GNZ25willmore,GNZ26} and in a
minimizing-movement framework for geometric flows \cite{LZ26}.

To realize this separation within an energy-stable weak formulation, we turn
to Onsager's variational principle.  Rooted in the reciprocal relations of
linear irreversible thermodynamics \cite{Onsager31a,Onsager31b}, the principle
characterizes dissipative evolution by minimizing, over admissible rates, a
Rayleighian that combines a dissipation potential with the variation of the free energy.  It has found successful applications in fluid
dynamics and soft-matter physics \cite{QianWS06,XuDD16,Doi11,Doi15}, as well as
in solid-state dewetting \cite{JiangZQSB19,ZhaoJWSQB24} governed by surface diffusion flow.  More recently, it has
been used to construct variational discretizations of mean curvature flow
\cite{LX25}.  For the fourth-order Willmore and Helfrich flows considered here,
however, a key issue remains: how to incorporate an independently selected
tangential motion into the curvature-vector PDE system while retaining the
energy-dissipation structure.

To address this challenge, we develop a two-stage constrained Onsager
formulation that incorporates relaxed-MDR tangential motion.  The relaxed-MDR
condition contains a tunable parameter that interpolates between purely normal
motion and MDR motion, providing flexibility in the strength of tangential
mesh redistribution, see \eqref{eq:rmdr-strong}.   In the first stage, the weak curvature relation is
imposed as a PDE constraint to derive a curvature-vector representation of the
bending-energy variation.  Unlike the BGN construction \cite{pwfade}, this stage neither
closes the evolution system nor determines the tangential velocity.  In the
second stage, Onsager's principle determines the surface velocity by combining
this energy variation with dissipation associated with the normal velocity,
while the relaxed-MDR condition is imposed independently as a rate constraint.
The resulting mixed formulation admits a continuous piecewise linear
semidiscretization that satisfies an exact energy-dissipation law and exactly
preserves area and volume when the corresponding constraints are imposed.  The
framework also accommodates open surfaces and spatially varying spontaneous
curvature.

The rest of the paper is organized as follows.  In Section~\ref{sec:mathf}, we introduce the geometric setting and the gradient flow.  Next in Section~\ref{sec:variational}, we derive the mixed weak formulation based on the constrained Onsager principle.  In Section~\ref{sec:semidiscrete}, we develop the semidiscrete finite element approximation and establish its energy law.  In Section~\ref{sec:fully-discrete}, we introduce the fully discrete
scheme and prove its unique solvability.  In Section~\ref{sec:numerics}, we
report numerical experiments on convergence, energy decay, constraints preservation, and mesh quality.  We conclude in Section~\ref{sec:con}.

\section{Geometric setting and bending flows}\label{sec:mathf}

\subsection{Geometric notation and identities}\label{subsec:geometric-setting}

\begin{figure}[!htbp]
\centering
\includegraphics[width=0.5\textwidth,trim=0 150 0 0,clip]{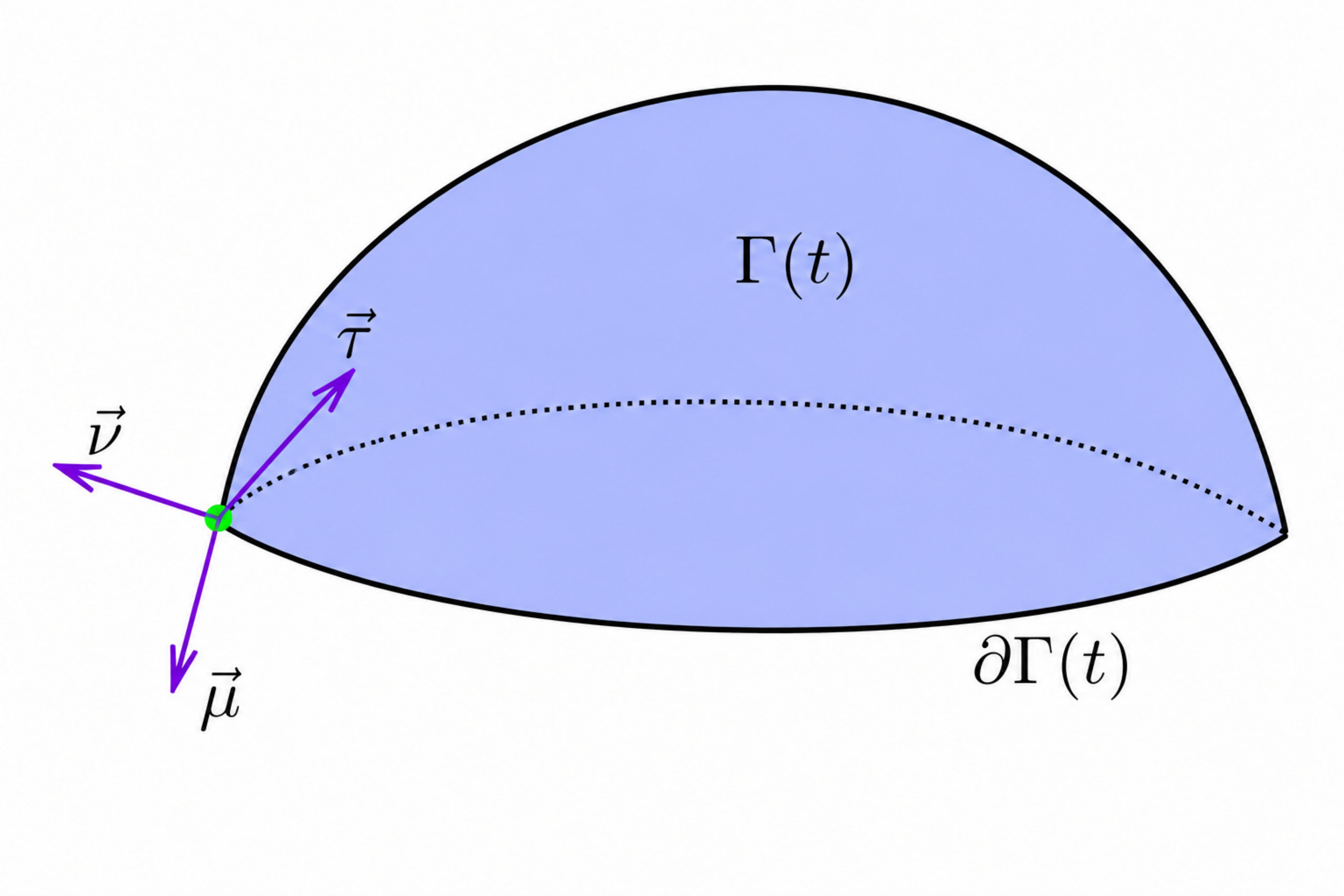}
\caption{Sketch of $\Gt$ with boundary $\partial\Gt$ and the unit vectors
$\vec\tau$, $\vec\mu$, and $\vec\nu$ along the boundary.}
\label{fig:open}
\end{figure}

Let $\{\Gamma(t)\}_{t\in[0,T]}$ be a family of smooth hypersurfaces in
$\bR^d$, $d\in\{2,3\}$, with or without boundary, parametrized by
\begin{equation}\label{eq:para}
  \vec u(\cdot,t):\Upsilon\to\bR^d,
  \qquad
  \Gamma(t)=\vec u(\Upsilon,t),
\end{equation}
where $\Upsilon$ is a fixed oriented reference manifold, possibly with
boundary.  The full velocity induced by this parametrization is
\begin{equation}\label{eq:parametric-velocity}
  \vec w(\vec u(\vec\rho,t),t)=
  \partial_t\vec u(\vec\rho,t),
  \qquad (\vec\rho,t)\in\Upsilon\times[0,T].
\end{equation}
Let $\vec\nu$ denote the unit normal to $\Gamma(t)$. The velocity can be decomposed
into its normal and tangential parts as
\[
\vec w
=
(\vec w\cdot\vec \nu)\vec \nu
+
(\Id-\vec\nu\otimes\vec\nu)\vec w
=
\mathcal V\,\vec\nu + P_\Gamma\vec w
\qquad\mbox{on}\quad\Gt,
\]
where $\mathcal V = \vec w\cdot\vec\nu$ is the normal velocity and
$P_\Gamma$ is the orthogonal projection onto the tangent space:
\begin{equation}
P_\Gamma= \Id - \vec\nu\otimes\vec\nu\qquad\mbox{on}\quad\Gt,
\end{equation}
with $\Id\in\R^{d\times d}$ being the identity matrix. 

We have the curvature identity
\begin{equation}\label{eq:mean-curvature-vector}
  \Ds\id|_{\Gamma(t)}
  =
  \vec\varkappa
  =
  \varkappa\,\vec\nu
  \qquad\hbox{on}\quad \Gamma(t),
\end{equation}
where $\id|_{\Gamma(t)}$ is the identity map on $\Gamma(t)$, $\Ds=\divs\nabs$ is
the Laplace--Beltrami operator, $\varkappa$ is the mean curvature, and
$\vec\varkappa$ is the mean-curvature vector.  Our sign convention is such
that $\varkappa=-\frac{(d-1)}{r}$ for a sphere of radius $r$ with the outer unit normal.

We define the evolving hypersurface by
\begin{equation}
\mathcal G_T=\bigcup\nolimits_{t\in[0,T]}(\Gamma(t)\times\{t\}).
\end{equation}
Given $f\in C^1(\mathcal G_T)$, its material derivative with respect to the
parametrization \eqref{eq:para} is defined by
\begin{equation}\label{eq:mattimeD}
	\partial_t^\circ f
  = \frac{\rd}{\rd t}f(\vec u(\vec\rho,t),t)
  \qquad\forall\,(\vec\rho,t)\in\Upsilon\times[0,T].
\end{equation}
The normal time derivative of $f$ is
\begin{equation}\label{eq:nortimeD}
\partial_t^\square f = \partial_t^\circ f - \vec w\cdot\nabs f\qquad\mbox{on}\quad\Gt,
\end{equation}
which measures the change of $f$ in the normal direction.  The following
lemma provides the geometric identities used later; see
\cite[Lemmas 37, 38, 39]{Barrett20}.

\begin{lem}
The following identities hold:
\begin{subequations}
\renewcommand{\theHequation}{\theparentequation.\alph{equation}}
\begin{alignat}{2}
\label{eq:dtsnu}
\partial_t^\square\vec\nu &= -\nabs\mathcal V\qquad &&\mbox{on}\quad\Gt, \\
\label{eq:vidennu}
\partial_t^\circ\vec\nu &= -[\nabs\vec w]^T\vec\nu\qquad &&\mbox{on}\quad\Gt,\\
\label{eq:dtscur}
\partial_t^\square\varkappa &= \Ds\mathcal V + \mathcal{V}\,|\nabs\vec\nu|^2\qquad &&\mbox{on}\quad\Gt,
\end{alignat}
where $\nabs\vec\nu$ is the Weingarten map and
$|A|^2={\rm tr}(A\,A^T)$ denotes the Frobenius norm.  Moreover, for a smooth
vector field $\vec\varphi:\mathcal G_T\to\bR^d$, the commutator rule reads
\begin{equation}
\partial_t^\circ(\nabs\cdot\vec\varphi)-\nabs\cdot(\partial_t^\circ\vec\varphi)
=
[\nabs\vec w - P_\Gamma\mathcal D(\vec w)P_\Gamma]:\nabs\vec\varphi
\quad\mbox{on}\quad\Gt, \label{eq:videncomu}
\end{equation}
\end{subequations}
where $\mathcal D(\vec w)$ is the symmetric tensor
\begin{equation*}    
\mathcal D (\vec w) = \nabs\vec w + (\nabs\vec w)^T.
\end{equation*}
\end{lem}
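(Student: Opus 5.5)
The plan is to work in a local parametrization and exploit the fact that $\partial_t^\circ$ commutes with $\partial_{\rho_i}$, so that all identities reduce to differentiating the tangent frame $\partial_i\vec u$, the metric $g_{ij}=\partial_i\vec u\cdot\partial_j\vec u$, and the normal. I would prove the identities in the order \eqref{eq:vidennu}, \eqref{eq:dtsnu}, \eqref{eq:videncomu}, \eqref{eq:dtscur}, since each one feeds into the next.

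\textbf{Step 1: \eqref{eq:vidennu}.} Differentiating $\vec\nu\cdot\vec\nu=1$ gives $\partial_t^\circ\vec\nu\cdot\vec\nu=0$, so $\partial_t^\circ\vec\nu$ is tangential. Differentiating $\vec\nu\cdot\partial_i\vec u=0$ gives
\[
\partial_t^\circ\vec\nu\cdot\partial_i\vec u=-\vec\nu\cdot\partial_i\vec w .
\]
Since $\partial_i\vec w=(\nabs\vec w)\,\partial_i\vec u$, which is the chain rule for a tangential derivative, we get $\partial_t^\circ\vec\nu\cdot\vec t=-([\nabs\vec w]^T\vec\nu)\cdot\vec t$ for every tangent vector $\vec t$. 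Moreover $[\nabs\vec w]^T\vec\nu$ is tangential, because $\nabs\vec w$ annihilates $\vec\nu$ in the convention $\nabs\vec w=(\nabla\vec w)P_\Gamma$. Both sides are therefore tangential and have equal tangential components, which proves \eqref{eq:vidennu}.

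\textbf{Step 2: \eqref{eq:dtsnu}.} Split $\vec w=\mathcal V\vec\nu+P_\Gamma\vec w$. Then
\[
[\nabs\vec w]^T\vec\nu=\nabs\mathcal V+[\nabs(P_\Gamma\vec w)]^T\vec\nu ,
\]
using $[\nabs\vec\nu]^T\vec\nu=0$, which follows from $|\vec\nu|=1$. For a tangential field $\vec w_T$, differentiating $\vec w_T\cdot\vec\nu=0$ gives $[\nabs\vec w_T]^T\vec\nu=-[\nabs\vec\nu]^T\vec w_T=-(\nabs\vec\nu)\vec w_T$, by symmetry of the Weingarten map. Together with $\vec w\cdot\nabs\vec\nu=(\nabs\vec\nu)\vec w_T$, this gives $\partial_t^\square\vec\nu=\partial_t^\circ\vec\nu-(\nabs\vec\nu)\vec w_T=-\nabs\mathcal V$.

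\textbf{Step 3: \eqref{eq:videncomu}.} Write
\[
\nabs\cdot\vec\varphi=g^{ij}\,\partial_i\vec\varphi\cdot\partial_j\vec u .
\]
Applying $\partial_t^\circ$ produces three terms.
1. The term $g^{ij}\partial_i(\partial_t^\circ\vec\varphi)\cdot\partial_j\vec u$, which is exactly $\nabs\cdot\partial_t^\circ\vec\varphi$.
2. The term $g^{ij}\partial_i\vec\varphi\cdot\partial_j\vec w=\nabs\vec\varphi:\nabs\vec w$, after checking index placement.
3. The term $\partial_t^\circ g^{ij}=-g^{ik}g^{jl}\partial_t^\circ g_{kl}$, where $\partial_t^\circ g_{kl}=\partial_k\vec w\cdot\partial_l\vec u+\partial_k\vec u\cdot\partial_l\vec w$. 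In tensor form this contributes $-P_\Gamma\mathcal D(\vec w)P_\Gamma:\nabs\vec\varphi$.

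Adding the three terms gives \eqref{eq:videncomu}.

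\textbf{Step 4: \eqref{eq:dtscur}.} I would use $\varkappa=-\nabs\cdot\vec\nu$, which is consistent with the sign convention (a sphere with outer normal has $\nabs\cdot\vec\nu=(d-1)/r$). Apply \eqref{eq:videncomu} with $\vec\varphi=\vec\nu$ and insert Step 1 and \eqref{eq:dtsnu}, then subtract $\vec w\cdot\nabs\varkappa$. The pieces that should cancel are:
1. the tangential-motion terms, namely $\nabs\cdot((\nabs\vec\nu)\vec w_T)-\vec w_T\cdot\nabs(\nabs\cdot\vec\nu)$ against the contributions of $\vec w_T$ in the commutator;
2. the normal-velocity terms, since $\nabs(\mathcal V\vec\nu)=\vec\nu\otimes\nabs\mathcal V+\mathcal V\nabs\vec\nu$ yields $[\nabs\vec w - P_\Gamma\mathcal D P_\Gamma]:\nabs\vec\nu\ni\mathcal V|\nabs\vec\nu|^2-2\mathcal V|\nabs\vec\nu|^2$. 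Combined with $-\nabs\cdot(-\nabs\mathcal V)$ from the divergence part, this should yield $\Ds\mathcal V+\mathcal V|\nabs\vec\nu|^2$.

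\textbf{Main obstacle.} The main obstacle is the tangential cancellation in Step 4. It requires the Codazzi-type symmetry that $\nabs\cdot(\nabs\vec\nu)-\nabs(\nabs\cdot\vec\nu)$ is normal, i.e.\ $\Ds\vec\nu$ has tangential part $\nabs(\nabs\cdot\vec\nu)$. I would establish that via the Simons identity and then verify the cancellation.

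The signs of the normal terms also need careful tracking. A safer fallback is to verify \eqref{eq:dtscur} first for purely normal $\vec w$, where $\partial_t^\square=\partial_t^\circ$. For purely tangential $\vec w$, $\varkappa$ is merely transported and $\partial_t^\square\varkappa=0$. Linearity in $\vec w$ of both sides then gives the general case.
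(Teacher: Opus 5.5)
Your derivation is correct. The paper does not prove this lemma; it only cites \cite[Lemmas~37, 38, 39]{Barrett20}. So what you give is a self-contained argument rather than a reconstruction of one in the text.

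Steps 1--3 are fine as written. In Step 3 the middle term equals $\nabs\vec\varphi:\nabs\vec w$ because $g^{ij}g^{lm}g_{jm}=g^{il}$. The two pieces of the $\partial_t^\circ g^{ij}$ term are exactly $-P_\Gamma\nabs\vec w\,P_\Gamma:\nabs\vec\varphi$ and $-P_\Gamma(\nabs\vec w)^TP_\Gamma:\nabs\vec\varphi$.

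In Step 4 the cancellation you anticipate does occur, and it is cleaner than your sketch suggests. Since $\nabs\vec\nu=P_\Gamma(\nabs\vec\nu)P_\Gamma$ is symmetric, $P_\Gamma\mathcal D(\vec w)P_\Gamma:\nabs\vec\nu=2\,\nabs\vec w:\nabs\vec\nu$, so the commutator contributes just $-\nabs\vec w:\nabs\vec\nu$. Meanwhile $\nabs\cdot\bigl((\nabs\vec\nu)\vec w_T\bigr)=(\Ds\vec\nu)\cdot\vec w_T+\nabs\vec\nu:\nabs\vec w_T$. All $\nabs\vec w_T$ terms therefore cancel identically, leaving
\[
\partial_t^\square\varkappa=\Ds\mathcal V+\mathcal V|\nabs\vec\nu|^2-\vec w_T\cdot\bigl(P_\Gamma\Ds\vec\nu-\nabs(\nabs\cdot\vec\nu)\bigr).
\]
The only geometric input is thus $\Ds\vec\nu=\nabs(\nabs\cdot\vec\nu)-|\nabs\vec\nu|^2\vec\nu$. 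This follows from the Codazzi equations, or from the commutation rule for second tangential derivatives applied to the components of $\vec\nu$ and summed. Simons' identity is not needed.

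Your fallback is also rigorous once phrased properly, and it yields this identity for free. Steps 1 and 3 hold for every smooth family and express $\partial_t^\square\varkappa$ at a fixed instant as a differential operator linear in $\vec w(\cdot,t)$. So there are no cross terms between the normal and tangential parts. Evaluating the operator on $\vec w_T$ amounts to differentiating along the reparametrization family $\Phi_s\circ\vec u(\cdot,t)$ of the frozen surface $\Gamma(t)$ generated by $\vec w_T$ (working locally in the open case). Along that family $\partial_s^\circ\varkappa=\vec w_T\cdot\nabs\varkappa$ by the chain rule, so the tangential part vanishes.

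Compared with the paper's bare citation, your route is self-contained. The cost is the coordinate bookkeeping of Step 3 plus either one Codazzi-type fact or the reparametrization argument.
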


The next lemma collects the integration-by-parts formula and the transport
formula on an evolving hypersurface; see \cite[Lemma 32]{Barrett20}.
\begin{lem}
Let $\phi$ be a smooth scalar field on $\Gamma(t)$.  The integration-by-parts
formula on $\Gt$ reads
\begin{subequations}
\renewcommand{\theHequation}{\theparentequation.\alph{equation}}
\begin{equation}\label{eq:interpart}
\int_{\Gt}\nabs\phi\,\dH^{d-1}
=
-\int_{\Gt}\phi\,\varkappa\,\vec\nu\,\dH^{d-1}
+
\int_{\partial\Gt}\phi\,\vec\mu\,\dH^{d-2},
\end{equation}
where $\vec\mu$ is the outer unit co-normal to $\partial\Gt$ (see
Fig.~\ref{fig:open}), and $\dH^\ell$ denotes integration with respect to the
$\ell$-dimensional Hausdorff measure in $\bR^d$.  Moreover, given
$f\in C^1(\mathcal G_T)$, the Reynolds transport theorem reads
\begin{align}
\frac{\dd }{\dd t}\int_{\Gt}f\,\dH^{d-1}&=
\int_{\Gt}\left(\partial_t^\circ f + f\,\divs\vec w\right)\,\dH^{d-1}=
\int_{\Gt}\left(\partial_t^\circ f + f\,\nabs\vec\id:\nabs\vec w\right)\,\dH^{d-1}\nonumber\\
&=
\int_{\Gt}(\partial_t^\square f - f\,\mathcal V\,\varkappa)\,\dH^{d-1}
+
\int_{\partial\Gt}f\,\vec w\cdot\vec\mu\,\dH^{d-2},\label{eq:transport}
\end{align}
\end{subequations}
where the second equality uses the identity
\[
\divs\vec\varphi = \nabs\vec\id: \nabs\vec\varphi\qquad\mbox{on}\quad\Gt
\]
and the last equality follows from integration by parts in \eqref{eq:interpart} and \eqref{eq:nortimeD}.
\end{lem}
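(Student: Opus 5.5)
The lemma has two parts: the integration-by-parts formula \eqref{eq:interpart} and the chain of equalities in \eqref{eq:transport}.

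\textbf{Step 1: the integration-by-parts formula.} I would work componentwise. Fix a constant vector $\vec e\in\bR^d$ and apply the surface divergence theorem to the vector field $\vec\varphi=\phi\,P_\Gamma\vec e$. This field is tangential, so the theorem gives
\[
\int_{\Gt}\divs(\phi P_\Gamma\vec e)\,\dH^{d-1}=\int_{\partial\Gt}\phi\,P_\Gamma\vec e\cdot\vec\mu\,\dH^{d-2}=\int_{\partial\Gt}\phi\,\vec e\cdot\vec\mu\,\dH^{d-2}.
\]
Expanding the left-hand side yields $\nabs\phi\cdot\vec e+\phi\,\divs(P_\Gamma\vec e)$. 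Since $\vec e$ is constant,
\[
\divs(P_\Gamma\vec e)=-\divs\bigl((\vec e\cdot\vec\nu)\vec\nu\bigr)=-(\vec e\cdot\vec\nu)\,\divs\vec\nu,
\]
because $\nabs(\vec e\cdot\vec\nu)$ is tangential and hence orthogonal to $\vec\nu$. With the paper's sign convention, $\divs\vec\nu=-\varkappa$. This is consistent with $\Ds\id=\varkappa\vec\nu$ and with the sphere value $\varkappa=-(d-1)/r$. Substituting gives $\nabs\phi\cdot\vec e=-\phi\varkappa\,\vec\nu\cdot\vec e+\divs(\cdots)$, and integrating and letting $\vec e$ range over a basis proves \eqref{eq:interpart}.

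\textbf{Step 2: the Reynolds formula.} Pull the integral back to $\Upsilon$, where $\dH^{d-1}=J(\vec\rho,t)\,\rd\vec\rho$ with $J$ the area element of $\vec u(\cdot,t)$. Differentiating under the integral sign gives $\partial_t^\circ f\,J+f\,\partial_tJ$. The key fact I would prove is the Jacobi-type identity $\partial_tJ=J\,\divs\vec w$. To see it, take local coordinates in which $J=\sqrt{\det g}$ with $g_{ij}=\partial_i\vec u\cdot\partial_j\vec u$. Then $\partial_t J=\tfrac12 J\,g^{ij}\partial_tg_{ij}=J\,g^{ij}\partial_i\vec u\cdot\partial_j\vec w=J\,\divs\vec w$. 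This is the main step, though it is standard.

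The second equality is the identity $\divs\vec\varphi=\nabs\id:\nabs\vec\varphi$, which holds because $\nabs\id=P_\Gamma$ and $\nabs\vec\varphi=\nabs\vec\varphi\,P_\Gamma$.

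\textbf{Step 3: the final form.} Substitute $\partial_t^\circ f=\partial_t^\square f+\vec w\cdot\nabs f$ from \eqref{eq:nortimeD}. The combination $\vec w\cdot\nabs f+f\divs\vec w$ equals $\divs(f\vec w)$ after splitting $\vec w=\mathcal V\vec\nu+P_\Gamma\vec w$. For the normal part I would use
\[
\divs(f\mathcal V\vec\nu)=f\mathcal V\divs\vec\nu=-f\mathcal V\varkappa,
\]
since $\nabs(f\mathcal V)\cdot\vec\nu=0$. For the tangential part, the divergence theorem (equivalently \eqref{eq:interpart} applied componentwise, whose curvature term vanishes against tangential fields) gives $\int_{\Gt}\divs(fP_\Gamma\vec w)=\int_{\partial\Gt}f\,\vec w\cdot\vec\mu$, using $\vec\mu\perp\vec\nu$. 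Summing the two parts gives \eqref{eq:transport}.

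The only delicate point is keeping the sign convention for $\varkappa$ consistent with the normal orientation, and I would check it against the sphere example.
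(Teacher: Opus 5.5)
Your proposal is correct and follows the route the paper indicates. The paper simply cites \cite[Lemma~32]{Barrett20} for the integration-by-parts and Reynolds formulas, which you derive directly: via the tangential divergence theorem applied to $\phi P_\Gamma\vec e$, and via $\partial_t J=J\,\divs\vec w$. It then obtains the last equality from \eqref{eq:nortimeD} and \eqref{eq:interpart}, just as you do.

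Your normal--tangential split in Step~3 is harmless but unnecessary. Applying \eqref{eq:interpart} to $\phi=f\,w_i$ and summing over $i$ gives $\int_{\Gt}\divs(f\vec w)\,\dH^{d-1}=-\int_{\Gt}f\,\mathcal V\,\varkappa\,\dH^{d-1}+\int_{\partial\Gt}f\,\vec w\cdot\vec\mu\,\dH^{d-2}$ in one line. Likewise, the sign $\divs\vec\nu=-\varkappa$ you want to check on the sphere already follows from your Step~1 computation, since $\Ds\id=\divs P_\Gamma=-(\divs\vec\nu)\vec\nu$.
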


\subsection{The Willmore flow}
\label{subsec:formal-flow}

We consider the bending energy
\begin{equation}\label{eq:geometric-bending-energy}
  \E_{\bkap}
  =
  \frac12\int_{\Gt}|\varkappa-\bkap|^2\,\dH^{d-1}=\frac12\int_{\Gt} k^2\,\dH^{d-1}, 
\end{equation}
where $k=\varkappa-\bkap$ is the shifted scalar curvature and
$\bkap:\Gamma(t)\to\mathbb{R}$ is the spontaneous curvature.  In contrast to
the classical model with constant spontaneous curvature~\cite{BGN08willmore,pwfade},
we allow $\bkap$ to be spatially heterogeneous \cite{Liu12mesoscale, Rangamani21local,contri26}.  We regard $\bkap$ as a geometric scalar field transported by the normal evolution of the surface.  Accordingly, its normal
time derivative vanishes  \cite{contri26}:
\begin{equation}\label{eq:bkapprop}
0=\partial_t^\square \bkap = \partial_t^\circ\bkap - \vec w\cdot\nabs\bkap\qquad\mbox{on}\quad\Gt.
\end{equation}
Using \eqref{eq:dtscur} and \eqref{eq:bkapprop} yields
\begin{equation}\label{eq:dtsk}
\partial_t^\square k = \partial_t^\square\varkappa-\partial_t^\square\bkap=\Ds\mathcal V + \mathcal V|\nabs\vec\nu|^2\qquad\mbox{on}\quad\Gt.
\end{equation}
Applying the transport theorem \eqref{eq:transport} and integrating the
$\Ds\mathcal V$ term by parts gives
\begin{align}
  \frac{\dd}{\dd t}\E_{\bkap}
  ={}&\int_{\Gt} (k\,\partial_t^\square k-\frac12 k^2\,\mathcal V\,\varkappa)\,\dH^{d-1} + \frac12\int_{\partial\Gt}k^2\,\vec w\cdot\vec\mu\,\dH^{d-2} \nonumber\\
  ={}&\int_{\Gt}\left[
  \Delta_s\,k +k\,|\nabs\vec\nu|^2
  -
  \frac12 k^2\varkappa
  \right]\mathcal V\,\dH^{d-1}\nonumber\\
  &+
  \int_{\partial\Gt}
  \left[
  \frac12 k^2\,\vec w\cdot\vec\mu
  -
  (\vec\mu\cdot\nabs k)\,\mathcal V +
k\,
  (\vec\mu\cdot\nabs\mathcal V)
  \right]\dH^{d-2}.
  \label{eq:dtE}
\end{align}

Taking the $L^2$-gradient flow of \eqref{eq:geometric-bending-energy} gives the
formal Willmore flow
\begin{equation}\label{eq:willmore}
\mathcal V = -\Ds k - k\,|\nabs\vec\nu|^2+\frac12 k^2\varkappa\qquad\mbox{on}\quad\Gt.
\end{equation}
For open surfaces we consider the following fixed-boundary conditions:
\begin{subequations}\label{eqn:bd}
\renewcommand{\theHequation}{\theparentequation.\alph{equation}}
\begin{align}
\hbox{Navier:}\qquad
&\partial\Gt = \partial\Gamma(0),
\qquad
k = 0
\qquad\mbox{on}\quad\partial_1\Gt,
\label{eq:navier}
\\
\hbox{clamped:}\qquad
&\partial\Gt = \partial\Gamma(0),
\qquad
\vec\mu\cdot\nabs\mathcal V =0
\qquad\mbox{on}\quad\partial_2\Gt,
\label{eq:clamp}
\end{align}
\end{subequations}
where $\partial\Gt=\partial_1\Gt\cup\partial_2\Gt$ is a disjoint partition. The fixed boundary condition implies that
\begin{equation}\label{eq:fixbd}
 \vec w\cdot\vec\mu=0,\qquad \vec w\cdot\vec\nu=\mathcal V=0
 \quad\mbox{on}\quad\partial\Gt .
\end{equation}
On the clamped part, \eqref{eq:clamp} can be interpreted geometrically as
preserving the prescribed co-normal.  For example, when $d=3$, the fixed
boundary also gives $\partial_t^\square\vec\tau=\vec0$, and hence
 \begin{equation*}
    \partial_t^\square\vec\mu
    =
    \partial_t^\square(\vec\nu\times\vec\tau)
    =
    (\partial_t^\square\vec\nu)\times\vec\tau
    =
    -(\nabs\mathcal V)\times\vec\tau
    =
    \vec0
    \qquad\mbox{on} \quad\partial_2\Gamma(t),
  \end{equation*}
since $\nabs\mathcal{V}\in{\rm span}\{\vec\tau,\vec\mu\}$ and
$\vec\mu\cdot\nabs\mathcal V=0$; see \cite{GNZ26}. This implies that the co-normal vector does not change in time. 
With \eqref{eq:fixbd}, the first two boundary terms in \eqref{eq:dtE} vanish;
the remaining boundary term vanishes by \eqref{eq:navier} on $\partial_1\Gt$
and by \eqref{eq:clamp} on $\partial_2\Gt$. Hence, \eqref{eq:dtE} shows that
the geometric flow \eqref{eq:willmore}, together with the boundary conditions
\eqref{eqn:bd}, obeys the energy-dissipation law
\begin{equation}
	\frac{\dd}{\dd t} \E_{\bkap} = -\int_{\Gt}\mathcal{V}^2\,\dH^{d-1}\leq 0.
	\label{eq:energylaw}
\end{equation}

\section{Onsager variational formulation}\label{sec:variational}

In this section, we first recall Onsager’s variational principle and then develop a two-stage variational construction for the Willmore flow.

\subsection{A constrained Onsager principle}\label{subsec:onsager-principle}

We give a short review of Onsager's variational principle in an abstract
setting.  Let $\vec q(t)$ denote the state of an isothermal system and let
$\dot{\vec q}$ be its rate.  The free energy is denoted by $\E(\vec q)$.  For
a fixed state $\vec q$, the Rayleighian is
\begin{equation}\label{eq:abstract-Rays}
\Rays(\vec q;\dot{\vec q})
=
\Phi(\vec q;\dot{\vec q}) + D\E(\vec q)[\dot{\vec q}] .
\end{equation}
Here $\Phi$ is the dissipation potential.  In this work it is quadratic in the
rate and may depend on the current state:
\begin{equation}\label{eq:abstract-dissipation}
  \Phi(\vec q;\dot{\vec q})
  =
  \frac12\,\mathcal M_{\vec q}(\dot{\vec q},\dot{\vec q}),
\end{equation}
where $\mathcal M_{\vec q}(\cdot,\cdot)$ is a symmetric nonnegative bilinear
form.  The second term in the Rayleighian is the directional derivative of the
free energy,
\begin{equation}
  D\E(\vec q)[\vec\phi]
  =
  \frac{\dd}{\dd\varepsilon}
  \E(\vec q+\varepsilon\vec\phi)
  \bigg|_{\varepsilon=0}
  =
  \left\langle
  \frac{\delta\E(\vec q)}{\delta\vec q},\vec\phi
  \right\rangle .
\end{equation}

According to Onsager's variational principle, the physical rate minimizes the
Rayleighian over the admissible rate space $\mathcal A(\vec q)$
\cite{Onsager31a,Onsager31b}:
\begin{equation}\label{eq:abstract-minimization}
  \dot{\vec q}
  =
  \operatorname*{arg\,min}_{\vec v\in\mathcal A(\vec q)}
  \Rays(\vec q;\vec v)
  =
  \operatorname*{arg\,min}_{\vec v\in\mathcal A(\vec q)}
  \left\{
  \frac12\,\mathcal M_{\vec q}(\vec v,\vec v)
  +
  D\E(\vec q)[\vec v]
  \right\}.
\end{equation}
If the admissible rates form a linear space, the associated Euler--Lagrange
equation is
\begin{equation}\label{eq:abstract-gradient}
  \mathcal M_{\vec q}(\dot{\vec q},\vec\eta)
  =
  -D\E(\vec q)[\vec\eta]
  \qquad\forall\vec\eta\in\mathcal A(\vec q).
\end{equation}
Taking $\vec\eta=\dot{\vec q}$ in \eqref{eq:abstract-gradient} recovers the
energy-dissipation identity
\begin{equation}\label{eq:abstract-energy-dissipation}
  \frac{\dd}{\dd t}\E(\vec q(t))
  =
  D\E(\vec q)[\dot{\vec q}]
  =
  -\mathcal M_{\vec q}(\dot{\vec q},\dot{\vec q})
  =
  -2\Phi(\vec q;\dot{\vec q})
  \le 0 .
\end{equation}

\subsection{Curvature-constrained bending-energy variation}
\label{sec:bending-variation}

The first stage of the construction determines the bending-energy variation.
At a fixed
surface, we regard the full velocity $\vec w$ as an arbitrary admissible
variation.  In the closed case all boundary sets below are empty.  In the open cases
$\partial\Gamma=\partial_1\Gamma\cup\partial_2\Gamma$, where
$\partial_1\Gamma$ carries the Navier condition and $\partial_2\Gamma$ carries
the clamped co-normal condition.  In what follows, we strengthen the geometric
fixed-boundary condition \eqref{eq:fixbd} by fixing the parametrization
pointwise:
\[
  \vec w=\vec0
  \qquad\text{on}\quad\partial\Gt.
\]
This excludes any residual tangential reparametrization along the fixed
boundary. Therefore the clamped boundary condition implies that 
\begin{equation}\label{eq:clamp2}
\vec\mu(\cdot,t) = \vec\mu(\cdot,0)\qquad\mbox{on}\quad\partial_2\Gt.
\end{equation}
Accordingly, we set
\begin{equation}\label{eqn:fspace}
\begin{aligned}
  \bV_{\partial_1}(\Gt)
  &:=
  \{\chi\in H^1(\Gt):\chi=0\quad\mbox{on }\partial_1\Gt\},
  \\
  \bV_0(\Gt)
  &:=
  \{\chi\in H^1(\Gt):\chi=0\quad\mbox{on }\partial\Gt\}.
\end{aligned}
\end{equation}
The admissible velocity therefore satisfies
$\vec w\in[\bV_0(\Gt)]^d$.

On introducing the shifted curvature
$\vec k = \vec\varkappa - \bkap\vec\nu = k\vec\nu$, we rewrite the energy in
\eqref{eq:geometric-bending-energy} as
\begin{equation}\label{eq:bending-energy}
  \E_{\bkap}(\vec u)
  = \frac12\int_{\Gt}|\vec k|^2\,\dH^{d-1}.
\end{equation}
We denote by $\ipd{\cdot,\cdot}_{\Gamma}$ and
$\ipd{\cdot,\cdot}_{\partial_2\Gamma}$ the $L^2$ inner products over $\Gt$
and $\partial_2\Gt$, respectively.  Recalling
\eqref{eq:mean-curvature-vector} and the boundary conditions
\eqref{eqn:bd}, we define the shifted curvature weakly by finding
$\vec k\in[\bV_{\partial_1}(\Gamma(t))]^d$ such that
\begin{equation}\label{eq:weak-vector-curvature}
  \ipd{\vec k+\bkap\vec\nu,~\vec\eta}_{\Gamma}
  +
  \ipd{\nabs\vec\id, \nabs\vec\eta}_{\Gamma}
  =
  \ipd{\vec\mu,\vec\eta}_{\partial_2\Gamma},
  \qquad
  \forall\,\vec\eta\in[\bV_{\partial_1}(\Gamma(t))]^d.
\end{equation} 

The following lemma provides the directional derivative of the free energy in
\eqref{eq:bending-energy}; see \cite[Lemma~131]{Barrett20} for a simplified setting.

\begin{lem}[curvature-constrained bending-energy variation]\label{lem:bending-shape-derivative}
Let $\mathcal{G}_T$ be a smooth evolving hypersurface with a smooth velocity
$\vec w\in[\bV_0(\Gamma(t))]^d$.  Assume that
$\vec k\in[\bV_{\partial_1}(\Gamma(t))]^d$ satisfies
\eqref{eq:weak-vector-curvature} and that $\bkap$ satisfies
\eqref{eq:bkapprop}.  Then the first variation of
\eqref{eq:bending-energy} in the direction $\vec w$ is
\begin{equation}\label{eq:bending-shape-derivative}
\begin{aligned}
&D\E_{\bkap}(\vec u)[\vec w] = {} \mathcal B_\Gamma(\vec k,\bkap; \vec w)\\
&={}-\ipd{\nabs\vec k,~\nabs\vec w}_{\Gamma}
-\ipd{\nabs\cdot\vec k,~\nabs\cdot\vec w}_{\Gamma}+\ipd{(\nabs\vec k)^T,~\mathcal D(\vec w)(\nabs\vec\id)^T}_{\Gamma}\\
 &\quad\;-\ipd{
\frac12|\vec k|^2+\bkap\,\vec\nu\cdot\vec k,~
\nabs\vec\id:\nabs\vec w 
}_{\Gamma}+\ipd{\bkap(\nabs\vec w)^T\vec\nu,~\vec k}_{\Gamma}-
\ipd{\vec w\cdot\nabs\bkap,~\vec\nu\cdot\vec k}_{\Gamma}.
\end{aligned}
\end{equation}
\end{lem}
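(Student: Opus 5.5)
The plan is the standard Lagrangian (PDE-constrained) technique of \cite[Lemma~131]{Barrett20}: differentiate the weak curvature relation \eqref{eq:weak-vector-curvature} in time along the flow, and test the result with $\vec k$ itself. This converts $\frac{\dd}{\dd t}\E_{\bkap}$ into an expression involving only $\vec w$ and first derivatives of $\vec k$.

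\textbf{Step 1: transport of the energy.} By the transport theorem \eqref{eq:transport}, using the second form,
\[
\frac{\dd}{\dd t}\E_{\bkap}=\ipd{\partial_t^\circ\vec k,\vec k}_\Gamma+\tfrac12\ipd{|\vec k|^2,\nabs\vec\id:\nabs\vec w}_\Gamma .
\]
It therefore remains to compute $\ipd{\partial_t^\circ\vec k,\vec k}_\Gamma$.

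\textbf{Step 2: differentiate the constraint.} Take test functions $\vec\eta$ that are transported with the parametrization, so that $\partial_t^\circ\vec\eta=\vec0$; this keeps them in $[\bV_{\partial_1}]^d$ because $\vec w=\vec0$ on $\partial\Gt$. Differentiate \eqref{eq:weak-vector-curvature} in time, treating each term with \eqref{eq:transport}.

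\emph{Mass term.} Differentiating $\ipd{\vec k+\bkap\vec\nu,\vec\eta}_\Gamma$ produces $\ipd{\partial_t^\circ\vec k,\vec\eta}$, plus $\ipd{(\vec k+\bkap\vec\nu)\cdot\vec\eta,\nabs\vec\id:\nabs\vec w}$, plus $\ipd{\partial_t^\circ\bkap\,\vec\nu+\bkap\partial_t^\circ\vec\nu,\vec\eta}$. Here \eqref{eq:bkapprop} gives $\partial_t^\circ\bkap=\vec w\cdot\nabs\bkap$, and \eqref{eq:vidennu} gives $\partial_t^\circ\vec\nu=-(\nabs\vec w)^T\vec\nu$.

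\emph{Stiffness term.} The derivative of the Dirichlet term $\ipd{\nabs\vec\id,\nabs\vec\eta}$ follows from the commutator rule \eqref{eq:videncomu}, applied componentwise and in its gradient form, together with $\partial_t^\circ\vec\id=\vec w$. The standard result is
\[
\frac{\dd}{\dd t}\ipd{\nabs\vec\id,\nabs\vec\eta}_\Gamma=\ipd{\nabs\vec w,\nabs\vec\eta}_\Gamma+\ipd{\nabs\cdot\vec w,\nabs\cdot\vec\eta}_\Gamma-\ipd{(\nabs\vec\eta)^T,\mathcal D(\vec w)(\nabs\vec\id)^T}_\Gamma .
\]

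\emph{Boundary term.} The right-hand side $\ipd{\vec\mu,\vec\eta}_{\partial_2\Gamma}$ is time independent. On $\partial_2\Gt$ the boundary is fixed pointwise, $\vec\mu$ is constant in time by \eqref{eq:clamp2}, and $\partial_t^\circ\vec\eta=\vec0$.

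\textbf{Step 3: test with $\vec k$.} Since $\vec k\in[\bV_{\partial_1}]^d$, choose $\vec\eta=\vec k$ at the fixed time; only $\partial_t^\circ\vec\eta=\vec0$ matters, not the time dependence of $\vec k$ itself. This gives
\[
\ipd{\partial_t^\circ\vec k,\vec k}=-\ipd{\nabs\vec w,\nabs\vec k}-\ipd{\nabs\cdot\vec w,\nabs\cdot\vec k}+\ipd{(\nabs\vec k)^T,\mathcal D(\vec w)(\nabs\vec\id)^T}-\ipd{(|\vec k|^2+\bkap\vec\nu\cdot\vec k),\nabs\vec\id:\nabs\vec w}-\ipd{\vec w\cdot\nabs\bkap,\vec\nu\cdot\vec k}+\ipd{\bkap(\nabs\vec w)^T\vec\nu,\vec k}.
\]
Adding the $\frac12|\vec k|^2$ term from Step 1 turns the coefficient $|\vec k|^2$ into $\frac12|\vec k|^2$. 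The result is exactly \eqref{eq:bending-shape-derivative}.

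\textbf{Main obstacle.} The step requiring most care is the derivative of the Dirichlet term. One must verify from \eqref{eq:videncomu} that $\partial_t^\circ\nabs\vec\varphi=\nabs\partial_t^\circ\vec\varphi-\nabs\vec\varphi\,(\nabs\vec w-P_\Gamma\mathcal D(\vec w)P_\Gamma)$ with the correct transposition conventions. Combined with the transport factor $\nabs\vec\id:\nabs\vec w=\nabs\cdot\vec w$, the $P_\Gamma$-terms should reorganize into $\mathcal D(\vec w)(\nabs\vec\id)^T$, using $\nabs\vec\id=P_\Gamma$ and $\nabs\vec\eta\,P_\Gamma=\nabs\vec\eta$.

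I would check this reorganization first against the flat-case identity $\frac{\dd}{\dd t}\int|\nabla\eta|^2$ under domain variation, $\int(\nabla\cdot w\,|\nabla\eta|^2-2\nabla\eta\cdot(\nabla w+\nabla w^T)/2\,\nabla\eta)$, to fix the signs. A secondary subtlety is justifying the choice $\vec\eta=\vec k$ after differentiation. This is legitimate because the differentiated identity holds for every parametrically transported $\vec\eta$, and any admissible function at time $t$ extends to such a field.
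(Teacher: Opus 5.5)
Your proposal follows the paper's proof step for step. The ingredients match: transported test functions with $\partial_t^\circ\vec\eta=\vec0$, the time-independent clamped boundary term via \eqref{eq:clamp2}, differentiation of \eqref{eq:weak-vector-curvature}, the choice $\vec\eta=\vec k$ at the fixed time, and the final addition of $\frac12\ipd{|\vec k|^2,\nabs\cdot\vec w}_\Gamma$. Your Step~3 identity is exactly \eqref{eq:cmck}, and the stiffness formula you quote is exactly \eqref{eq:continuous-right-divergence}.

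The one thing to correct is how you propose to justify that stiffness formula. The gradient-form commutator you plan to verify is wrong as written:
$\partial_t^\circ\nabs\vec\varphi=\nabs\partial_t^\circ\vec\varphi-\nabs\vec\varphi\,(\nabs\vec w-P_\Gamma\mathcal D(\vec w)P_\Gamma)$.
For $\vec\varphi=\vec\id$ it gives $\partial_t^\circ P_\Gamma=\nabs\vec w+(\nabs\vec w)^TP_\Gamma$. For a non-rigid tangential motion of a flat plane this equals $\mathcal D(\vec w)\neq 0$, although $P_\Gamma$ stays constant. Testing \eqref{eq:videncomu} with $\vec\varphi=f\vec a$, $\vec a$ constant, shows that the correct version has the opposite sign and the transposed bracket:
$\partial_t^\circ\nabs\vec\varphi=\nabs\partial_t^\circ\vec\varphi+\nabs\vec\varphi\,[\nabs\vec w-P_\Gamma\mathcal D(\vec w)P_\Gamma]^T$.

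More to the point, no gradient-form rule is needed. As in the paper, use $\nabs\vec\id:\nabs\vec\eta=\nabs\cdot\vec\eta$, so that
$\frac{\dd}{\dd t}\ipd{\nabs\vec\id,\nabs\vec\eta}_\Gamma=\ipd{\partial_t^\circ(\nabs\cdot\vec\eta),1}_\Gamma+\ipd{\nabs\cdot\vec\eta,\nabs\cdot\vec w}_\Gamma$.
Then apply \eqref{eq:videncomu} directly with $\partial_t^\circ\vec\eta=\vec0$. Finally, rewrite $P_\Gamma\mathcal D(\vec w)P_\Gamma:\nabs\vec\eta=(\nabs\vec\eta)^T:\mathcal D(\vec w)(\nabs\vec\id)^T$, using $P_\Gamma(\nabs\vec\eta)^T=(\nabs\vec\eta)^T$ and cyclicity of the trace. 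This gives your stated formula in two lines.
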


\begin{proof}
For a fixed time, choose a transported test function
$\vec\eta\in[\bV_{\partial_1}(\Gamma(t))]^d$ with
$\partial_t^\circ\vec\eta=\vec0$.  Since $\vec w=\vec0$ on
$\partial\Gamma(t)$, the trace of $\vec\eta$ and the boundary measure are
time-independent.  On the clamped boundary $\partial_2\Gamma(t)$, the
prescribed co-normal is preserved by \eqref{eq:clamp2}.  Hence
\[
\frac{\dd}{\dd t}
\ipd{\vec\mu,\vec\eta}_{\partial_2\Gamma}=0,
\]
and differentiating \eqref{eq:weak-vector-curvature} gives
\begin{equation}\label{eq:cdc}
\frac{\dd}{\dd t}\ipd{\vec k,\vec\eta}_{\Gamma}
=
-
\frac{\dd}{\dd t}\ipd{\nabs\vec\id,\nabs\vec\eta}_{\Gamma}
-
\frac{\dd}{\dd t}\ipd{\bkap\vec\nu,\vec\eta}_{\Gamma}.
\end{equation}

Using the transport formula \eqref{eq:transport} and
$\partial_t^\circ\vec\eta=\vec0$, the left-hand side of \eqref{eq:cdc} can be
rewritten as
\begin{equation}\label{eq:clc}
\frac{\dd}{\dd t}\ipd{\vec k,\vec\eta}_{\Gamma}
=
\ipd{\partial_t^\circ\vec k,\vec\eta}_{\Gamma}
+
\ipd{\vec k\cdot\vec\eta,\nabs\cdot\vec w}_{\Gamma}.
\end{equation}
We recall the identity
\[
\nabs\vec\id:\nabs\vec\eta
=P_\Gamma:\nabs\vec\eta
=\nabs\cdot\vec\eta\qquad\mbox{on}\quad\Gt.
\]
For the first term on the right-hand side of \eqref{eq:cdc}, applying the
transport formula and the commutator identity \eqref{eq:videncomu}, together
with $\partial_t^\circ\vec\eta=\vec0$, gives
\begin{equation}\label{eq:continuous-right-divergence}
\begin{aligned}
\frac{\dd}{\dd t}\ipd{\nabs\vec\id,\nabs\vec\eta}_{\Gamma}
={}&
\frac{\dd}{\dd t}\ipd{\nabs\cdot\vec\eta,1}_{\Gamma}=
\ipd{\partial_t^\circ(\nabs\cdot\vec\eta),1}_{\Gamma}
+
\ipd{\nabs\cdot\vec\eta,\nabs\cdot\vec w}_{\Gamma}
\\
={}&
\ipd{
[\nabs\vec w-P_\Gamma\mathcal D(\vec w)P_\Gamma]
:\nabs\vec\eta,1
}_{\Gamma}
+
\ipd{\nabs\cdot\vec\eta,\nabs\cdot\vec w}_{\Gamma}
\\
={}&\ipd{\nabs\vec\eta,\nabs\vec w}_{\Gamma}
+
\ipd{\nabs\cdot\vec\eta,\nabs\cdot\vec w}_{\Gamma}
-
\ipd{(\nabs\vec\eta)^T,\mathcal D(\vec w)(\nabs\vec\id)^T}_{\Gamma},
\end{aligned}
\end{equation}
where the last equality uses the symmetry of $P_\Gamma$ and the fact that
$\nabs\vec\eta P_\Gamma=\nabs\vec\eta$.

For the last term in \eqref{eq:cdc}, the identities \eqref{eq:bkapprop} and
\eqref{eq:vidennu} imply
\[
\partial_t^\circ(\bkap\,\vec\nu)
=
(\vec w\cdot\nabs\bkap)\,\vec\nu
-
\bkap(\nabs\vec w)^T\,\vec\nu.
\]
Another application of the transport formula gives
\begin{equation}\label{eq:crn}
\begin{aligned}
\frac{\dd}{\dd t}\ipd{\bkap\,\vec\nu,~\vec\eta}_{\Gamma}
={}&
\ipd{\partial_t^\circ(\bkap\,\vec\nu),~\vec\eta}_{\Gamma}
+
\ipd{\bkap\,\vec\nu\cdot\vec\eta,~\nabs\cdot\vec w}_{\Gamma}
\\
={}&
\ipd{\vec w\cdot\nabs\bkap,~\vec\nu\cdot\vec\eta}_{\Gamma}
-
\ipd{\bkap\,(\nabs\vec w)^T\vec\nu,~\vec\eta}_{\Gamma}
+
\ipd{\bkap\vec\nu\cdot\vec\eta,\nabs\cdot\vec w}_{\Gamma}.
\end{aligned}
\end{equation}

Combining \eqref{eq:cdc}--\eqref{eq:crn} and choosing the transported test
function whose value at the fixed time is $\vec\eta=\vec k$, we obtain
\begin{equation}\label{eq:cmck}
\begin{aligned}
\ipd{\partial_t^\circ\vec k,\vec k}_{\Gamma}
={}&
-\ipd{|\vec k|^2,\nabs\cdot\vec w}_{\Gamma}
-\ipd{\nabs\vec k,\nabs\vec w}_{\Gamma}
-\ipd{\nabs\cdot\vec k,\nabs\cdot\vec w}_{\Gamma}
\\
&+
\ipd{(\nabs\vec k)^T,\mathcal D(\vec w)(\nabs\vec\id)^T}_{\Gamma}
-\ipd{\vec w\cdot\nabs\bkap,\vec\nu\cdot\vec k}_{\Gamma}
\\
&+
\ipd{\bkap(\nabs\vec w)^T\vec\nu,\vec k}_{\Gamma}
-\ipd{\bkap\vec\nu\cdot\vec k,\nabs\cdot\vec w}_{\Gamma}.
\end{aligned}
\end{equation}

Finally, differentiating \eqref{eq:bending-energy} and applying the transport
formula again yields
\begin{equation}\label{eq:cet}
D\E_{\bkap}(\vec u)[\vec w]
=
\frac{\dd}{\dd t}\frac12\ipd{\vec k,\vec k}_{\Gamma}
=
\ipd{\partial_t^\circ\vec k,\vec k}_{\Gamma}
+
\frac12\ipd{|\vec k|^2,\nabs\cdot\vec w}_{\Gamma}.
\end{equation}
Substituting \eqref{eq:cmck} into \eqref{eq:cet}, we obtain
\[
\begin{aligned}
D\E_{\bkap}(\vec u)[\vec w]
={}&
-\ipd{\nabs\vec k,\nabs\vec w}_{\Gamma}
-\ipd{\nabs\cdot\vec k,\nabs\cdot\vec w}_{\Gamma}
+\ipd{(\nabs\vec k)^T,
\mathcal D(\vec w)(\nabs\vec\id)^T}_{\Gamma}
\\
&-
\ipd{
\frac12|\vec k|^2+\bkap\vec\nu\cdot\vec k,
\nabs\vec\id:\nabs\vec w
}_{\Gamma}
+\ipd{\bkap(\nabs\vec w)^T\vec\nu,\vec k}_{\Gamma}
-
\ipd{\vec w\cdot\nabs\bkap,\vec\nu\cdot\vec k}_{\Gamma},
\end{aligned}
\]
which is precisely \eqref{eq:bending-shape-derivative}.  Since the fixed time
was arbitrary, the identity holds for every $t\in[0,T]$.
\end{proof}

\begin{rem}
In the special case $\bkap=0$ and $\partial\Gamma=\emptyset$,
\eqref{eq:bending-shape-derivative} reduces to the variation formula
\cite[(3.13)]{Dziuk08}, which was derived from the second variation of the area
functional.  After suppressing the area-difference elasticity contribution
in \cite{pwfade} and assuming $\nabs\bkap=\vec0$, it also agrees with
\cite[(2.17)]{pwfade}, which was obtained by a formal PDE-constrained
optimization argument.
\end{rem}

\subsection{Constrained Onsager weak formulation}
\label{sec:vf}

We now use the bending-energy variation from
Lemma~\ref{lem:bending-shape-derivative} to determine the evolution through a
constrained Onsager principle.  At each time, the surface state is represented
by the parametrization $\vec u$, and its full rate is the velocity $\vec w$.

Since the geometric gradient-flow law determines only the normal velocity, we
choose the dissipation potential
\begin{equation}\label{eq:normal-dissipation}
  \Phi(\vec w)=
  \frac12\,\mathcal M_\Gamma(\vec w,\vec w)
  =\frac12\int_{\Gt}(\vec w\cdot\vec\nu)\,(\vec w\cdot\vec\nu)\,\dH^{d-1} .
\end{equation}
For a prescribed normal velocity, the minimal-deformation-rate (MDR)
principle selects the tangential component of $\vec w$ through the constraint
\cite{Hu22evolving,Gao26,Gao26dualMDR}
\[
\lambda_{\rm MDR}\vec\nu
=
\Ds\vec w
\qquad\mbox{on}\quad\Gt,
\]
or, equivalently, $P_\Gamma(\Ds\vec w)=\vec0$.  To interpolate between
purely normal motion and the MDR limit, we impose the scaled relaxed-MDR
condition \cite{LZ26}
\begin{equation}\label{eq:rmdr-strong}
P_\Gamma\vec w + \lambda\vec\nu
  =
  \alpha\,\Ds\vec w
  \qquad\hbox{on}\quad\Gt,
\end{equation}
where $\alpha\in\R_{\geq 0}$ is a tunable parameter and
$\lambda:\Gt\to\bR$ is an auxiliary variable.  At a fixed surface, this
constraint is linear in $(\vec w,\lambda)$.  For $\alpha=0$,
\eqref{eq:rmdr-strong} gives $P_\Gamma\vec w=\vec0$, so the velocity is purely
normal.  For $\alpha>0$, dividing \eqref{eq:rmdr-strong} by $\alpha$ and
setting $\widehat\lambda=\lambda/\alpha$ gives
\[
\frac1\alpha P_\Gamma\vec w
+
\widehat\lambda\vec\nu
=
\Ds\vec w.
\]
Thus, as $\alpha\to\infty$, the tangential relaxation term vanishes and the
classical MDR constraint is formally recovered.

Based on \eqref{eq:normal-dissipation} and
\eqref{eq:bending-shape-derivative}, we now apply Onsager's variational
principle \eqref{eq:abstract-minimization} to the Willmore flow.  Given an
initial parametrization $\vec u_0$ of
$\Gamma(0)$ and an initial spontaneous curvature $\bkap_0$, for each
$t\in(0,T]$ we seek the evolving surface
$\Gamma(t)=\vec u(\Upsilon,t)$, whose velocity is related to $\vec u$ by
\eqref{eq:parametric-velocity}, together with a pair satisfying
\begin{equation}\label{eq:velocity-minimization}
(\vec w(t),\lambda(t))
\in
\operatorname*{argmin}_{\substack{
\vec v\in[\bV_0(\Gamma(t))]^d\\
\ell\in L^2(\Gamma(t))
}}
\left\{
\frac12
\ipd{\vec v\cdot\vec\nu,\vec v\cdot\vec\nu}_{\Gamma(t)}
+
\B_{\Gamma(t)}(\vec k(t),\bkap(t);\vec v)
\right\},
\end{equation}
subject to
\begin{equation}\label{eq:ctangv}
0=
\ipd{P_\Gamma\vec v,\vec\xi}_{\Gamma(t)}
+
\ipd{\ell\,\vec\nu,\vec\xi}_{\Gamma(t)}
+
\alpha
\ipd{\nabs\vec v,\nabs\vec\xi}_{\Gamma(t)}
\qquad\forall\,\vec\xi\in[\bV_0(\Gamma(t))]^d,
\end{equation}
where $\bkap$ is transported according to \eqref{eq:bkapprop} and
$\vec k(t)\in[\bV_{\partial_1}(\Gamma(t))]^d$ is characterized by
\eqref{eq:weak-vector-curvature}.

The constraint \eqref{eq:ctangv} is the weak form of the relaxed-MDR condition
and selects the tangential velocity; see \eqref{eq:rmdr-strong}.  Denoting the
minimizer by $(\vec w,\lambda)$, let
$\vec p\in[\bV_0(\Gamma(t))]^d$ be its Lagrange multiplier.  The corresponding
Lagrangian is
\begin{equation}\label{eq:velocity-lagrangian}
\begin{aligned}
\Lag_\Gamma^1(\vec w,\lambda,\vec p)
:={}&
\frac12\ipd{\vec w\cdot\vec\nu,\vec w\cdot\vec\nu}_{\Gamma}
+
\B_\Gamma(\vec k,\bkap;\vec w)
\\
&+
\ipd{P_\Gamma\vec w,\vec p}_{\Gamma}
+
\ipd{\lambda\,\vec\nu,\vec p}_{\Gamma}
+
\alpha\ipd{\nabs\vec w,\nabs\vec p}_{\Gamma}.
\end{aligned}
\end{equation}
Taking variations with respect to $(\vec w,\lambda,\vec p)$ and supplementing
the result with the weak definition of $\vec k$ and the kinematic relation
\eqref{eq:parametric-velocity} yields the Euler--Lagrange system.  The complete
variational formulation is therefore to find a sufficiently smooth
parametrization $\vec u:\Upsilon\times[0,T]\to\bR^d$ with
$\vec u(\cdot,0)=\vec u_0$ and $\vec u(\Upsilon,t)=\Gamma(t)$, a field
$\bkap\in C^1(\mathcal G_T)$ with $\bkap(\cdot,0)=\bkap_0$, and
\[
(\vec w,\vec k,\lambda,\vec p)\in
[\bV_0(\Gamma(t))]^d\times[\bV_{\partial_1}(\Gamma(t))]^d
\times L^2(\Gamma(t))\times[\bV_0(\Gamma(t))]^d
\]
such that \eqref{eq:parametric-velocity} holds and
\begin{subequations}\label{eqn:weak}
\renewcommand{\theHequation}{\theparentequation.\alph{equation}}
\begin{align}
0={}&
\ipd{\vec w\cdot\vec\nu,\vec\chi\cdot\vec\nu}_{\Gamma}
+
\B_\Gamma(\vec k,\bkap;\vec\chi)
+
\ipd{P_\Gamma\vec\chi,\vec p}_{\Gamma}\nonumber\\
&\qquad\qquad+
\alpha\ipd{\nabs\vec\chi,\nabs\vec p}_{\Gamma}
\qquad\forall\,\vec\chi\in[\bV_0(\Gamma(t))]^d,
\label{eq:weak1}\\[0.4em]
0={}&
\ipd{\vec k+\bkap\,\vec\nu,\vec\eta}_{\Gamma}
+
\ipd{\nabs\vec\id,\nabs\vec\eta}_{\Gamma}
-\ipd{\vec\mu,\vec\eta}_{\partial_2\Gamma}
\qquad \forall\,\vec\eta\in[\bV_{\partial_1}(\Gamma(t))]^d,
\label{eq:weak2}\\[0.4em]
0={}&
\ipd{\rho\,\vec\nu,\vec p}_{\Gamma}
\qquad \forall\,\rho\in L^2(\Gamma(t)),
\label{eq:weak3}\\[0.4em]
0={}&
\ipd{P_\Gamma\vec w,\vec\xi}_{\Gamma}
+
\ipd{\lambda\,\vec\nu,\vec\xi}_{\Gamma}
+
\alpha\ipd{\nabs\vec w,\nabs\vec\xi}_{\Gamma}
\qquad \forall\,\vec\xi\in[\bV_0(\Gamma(t))]^d .
\label{eq:weak4}
\end{align}
\end{subequations}
The formulation is completed by the weak transport equation
\begin{equation}\label{eq:weak-bkap-transport}
0=
\ipd{\partial_t^\circ\bkap,\varphi}_{\Gamma}
-
\ipd{\vec w\cdot\nabs\bkap,\varphi}_{\Gamma}
\qquad\forall\,\varphi\in L^2(\Gamma(t)).
\end{equation}

We next consider the Helfrich flow, namely, the area- and volume-preserving
gradient flow of \eqref{eq:geometric-bending-energy}.  We
denote the surface area of $\Gt$ by
$|\Gt|$.  In the closed case, $\vol(\Gt)$ denotes the volume enclosed by
$\Gt$.  In the open case, we choose a time-independent capping surface
$\Gamma_s$ with $\partial\Gamma_s=\partial\Gt$ and define $\vol(\Gt)$ as the
volume enclosed by $\Gt\cup\Gamma_s$.  The transport theorem gives
\begin{subequations}\label{eq:dtva}
\renewcommand{\theHequation}{\theparentequation.\alph{equation}}
\begin{align}
\frac{\dd}{\dd t}\vol(\Gt)
&=\ipd{\vec w,\vec\nu}_{\Gamma},
\label{eq:dtv}\\
\frac{\dd}{\dd t}|\Gt|
&=-\ipd{\vec\varkappa,\vec w}_{\Gamma}
+\ipd{\vec\mu,\vec w}_{\partial\Gamma}
=-\ipd{\vec k+\bkap\vec\nu,\vec w}_{\Gamma}.
\label{eq:dta}
\end{align}
\end{subequations}
Here the fixed capping surface makes no contribution to \eqref{eq:dtv}, while
the boundary term in \eqref{eq:dta} vanishes because
$\vec w=\vec0$ on $\partial\Gamma$.

Accordingly, the Helfrich flow is obtained by
requiring the minimizer in \eqref{eq:velocity-minimization} to satisfy the two
additional constraints
\begin{subequations}\label{eq:helfrich-constraints}
\renewcommand{\theHequation}{\theparentequation.\alph{equation}}
\begin{align}
  0&=\ipd{\vec w,\vec\nu}_{\Gamma},
  \label{eq:helfrich-volume-constraint}
  \\
  0&=\ipd{\vec k+\bkap\,\vec\nu,\vec w}_{\Gamma}.
  \label{eq:helfrich-area-constraint}
\end{align}
\end{subequations}
If only \eqref{eq:helfrich-volume-constraint} is imposed and
\eqref{eq:helfrich-area-constraint} is omitted, the resulting evolution is
the volume-preserving Willmore flow.
Introducing scalar Lagrange multipliers
$\lambda_A,\lambda_V\in\R$ for the area and volume constraints, respectively,
we augment \eqref{eq:velocity-lagrangian} as follows:
\begin{equation}\label{eq:helfrich-lagrangian}
\begin{aligned}
\Lag_\Gamma^2(\vec w,\lambda,\vec p,\lambda_A,\lambda_V)
:={}&
\Lag_\Gamma^1(\vec w,\lambda,\vec p)
+
\lambda_A\ipd{(\vec k + \bkap\,\vec\nu),~\vec w}_{\Gamma}
+
\lambda_V\ipd{\vec w,\vec\nu}_{\Gamma}.
\end{aligned}
\end{equation}
The corresponding Euler--Lagrange system is to find a sufficiently smooth
parametrization $\vec u:\Upsilon\times[0,T]\to\bR^d$ with
$\vec u(\cdot,0)=\vec u_0$ and $\vec u(\Upsilon,t)=\Gamma(t)$, a field
$\bkap\in C^1(\mathcal G_T)$ with $\bkap(\cdot,0)=\bkap_0$, and
\[
(\vec w,\vec k,\lambda,\vec p,\lambda_A,\lambda_V)
\in
[\bV_0(\Gamma(t))]^d\times[\bV_{\partial_1}(\Gamma(t))]^d
\times L^2(\Gamma(t))
\times[\bV_0(\Gamma(t))]^d\times\R\times\R
\]
such that \eqref{eq:parametric-velocity} holds and
\begin{subequations}\label{eqn:hweak}
\renewcommand{\theHequation}{\theparentequation.\alph{equation}}
\begin{align}
0={}&
\ipd{\vec w\cdot\vec\nu,\vec\chi\cdot\vec\nu}_{\Gamma}
+
\B_\Gamma(\vec k,\bkap;\vec\chi)
+
\ipd{P_\Gamma\vec\chi,\vec p}_{\Gamma}
+
\alpha\ipd{\nabs\vec\chi,\nabs\vec p}_{\Gamma}
\nonumber\\
&+
\lambda_A\ipd{\vec k+\bkap\,\vec\nu,\vec\chi}_{\Gamma}
+
\lambda_V\ipd{\vec\chi,\vec\nu}_{\Gamma}
\qquad \forall\,\vec\chi\in[\bV_0(\Gamma(t))]^d,
\label{eq:hweak1}\\[0.4em]
0={}&
\ipd{\vec k+\bkap\,\vec\nu,\vec\eta}_{\Gamma}
+
\ipd{\nabs\vec\id,\nabs\vec\eta}_{\Gamma}
-
\ipd{\vec\mu,\vec\eta}_{\partial_2\Gamma}
\qquad \forall\,\vec\eta\in[\bV_{\partial_1}(\Gamma(t))]^d,
\label{eq:hweak2}\\[0.4em]
0={}&
\ipd{\rho\,\vec\nu,\vec p}_{\Gamma}
\qquad \forall\,\rho\in L^2(\Gamma(t)),
\label{eq:hweak3}\\[0.4em]
0={}&
\ipd{P_\Gamma\vec w,\vec\xi}_{\Gamma}
+
\ipd{\lambda\,\vec\nu,\vec\xi}_{\Gamma}
+
\alpha\ipd{\nabs\vec w,\nabs\vec\xi}_{\Gamma}
\qquad \forall\,\vec\xi\in[\bV_0(\Gamma(t))]^d,
\label{eq:hweak4}\\[0.4em]
0={}&
\ipd{\vec w,\vec\nu}_{\Gamma},
\label{eq:hweak5}\\[0.4em]
0={}&
\ipd{\vec k+\bkap\,\vec\nu,\vec w}_{\Gamma},
\label{eq:hweak6}
\end{align}
\end{subequations}
The last two equations are precisely the volume and area constraints in
\eqref{eq:helfrich-constraints}.  The system is completed by the kinematic relation
\eqref{eq:parametric-velocity}, the transport equation
\eqref{eq:weak-bkap-transport}, and the initial conditions
$\vec u(\cdot,0)=\vec u_0$ and $\bkap(\cdot,0)=\bkap_0$.

\begin{thm}[continuous conservation and energy dissipation]
\label{thm:continuous-energy}
Let a sufficiently smooth solution of \eqref{eqn:hweak},
\eqref{eq:parametric-velocity}, and \eqref{eq:weak-bkap-transport} be given.
Then
\begin{equation}\label{eq:continuous-conservation}
  \frac{\dd}{\dd t}|\Gamma(t)|=0,
  \qquad
  \frac{\dd}{\dd t}\vol(\Gamma(t))=0,
\end{equation}
and
\begin{equation}\label{eq:continuous-mixed-energy-law}
  \frac{\dd}{\dd t}\E_{\bkap}(\Gamma(t))
  +
  \ipd{\vec w\cdot\vec\nu,\vec w\cdot\vec\nu}_{\Gamma}
  =0.
\end{equation}
For the unconstrained system \eqref{eqn:weak}, the same energy identity holds
without the conservation statements in \eqref{eq:continuous-conservation}.
\end{thm}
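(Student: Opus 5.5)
The conservation statements follow directly from the transport identities \eqref{eq:dtva}. Since $\vec w\in[\bV_0(\Gamma(t))]^d$ vanishes on $\partial\Gamma(t)$, \eqref{eq:dtv} and \eqref{eq:dta} give
$\frac{\dd}{\dd t}\vol(\Gamma(t))=\ipd{\vec w,\vec\nu}_\Gamma$ and $\frac{\dd}{\dd t}|\Gamma(t)|=-\ipd{\vec k+\bkap\vec\nu,\vec w}_\Gamma$. Both right-hand sides vanish by \eqref{eq:hweak5} and \eqref{eq:hweak6}. One small point needs checking: $\vec k$ from \eqref{eq:hweak2} agrees with the curvature appearing in \eqref{eq:dta}. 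Testing \eqref{eq:hweak2} with $\vec\eta=\vec w$, which is admissible since $\bV_0\subset\bV_{\partial_1}$, gives $\ipd{\vec k+\bkap\vec\nu,\vec w}_\Gamma=-\ipd{\nabs\vec\id,\nabs\vec w}_\Gamma=-\ipd{\nabs\cdot\vec w,1}_\Gamma=-\frac{\dd}{\dd t}|\Gamma(t)|$. This is the transport formula with $f=1$ and needs no smoothness of $\vec\varkappa$ beyond the weak identity.

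For the energy law I will use the Onsager mechanism \eqref{eq:abstract-energy-dissipation}. Since $\vec k$ satisfies \eqref{eq:weak-vector-curvature} (namely \eqref{eq:hweak2}) at every time and $\bkap$ satisfies \eqref{eq:bkapprop} (from \eqref{eq:weak-bkap-transport} with $\varphi$ arbitrary), Lemma~\ref{lem:bending-shape-derivative} applies along the actual flow. It gives $\frac{\dd}{\dd t}\E_{\bkap}(\Gamma(t))=\B_\Gamma(\vec k,\bkap;\vec w)$. Next I take $\vec\chi=\vec w$ in \eqref{eq:hweak1}:
\[
0=\ipd{\vec w\cdot\vec\nu,\vec w\cdot\vec\nu}_\Gamma+\B_\Gamma(\vec k,\bkap;\vec w)+\ipd{P_\Gamma\vec w,\vec p}_\Gamma+\alpha\ipd{\nabs\vec w,\nabs\vec p}_\Gamma+\lambda_A\ipd{\vec k+\bkap\vec\nu,\vec w}_\Gamma+\lambda_V\ipd{\vec w,\vec\nu}_\Gamma .
\]
The last two terms vanish by \eqref{eq:hweak5}--\eqref{eq:hweak6}.

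The main step is to kill the tangential-constraint terms $\ipd{P_\Gamma\vec w,\vec p}_\Gamma+\alpha\ipd{\nabs\vec w,\nabs\vec p}_\Gamma$. To do this I take $\vec\xi=\vec p\in[\bV_0]^d$ in \eqref{eq:hweak4}, which gives $\ipd{P_\Gamma\vec w,\vec p}_\Gamma+\alpha\ipd{\nabs\vec w,\nabs\vec p}_\Gamma=-\ipd{\lambda\vec\nu,\vec p}_\Gamma$. Then \eqref{eq:hweak3} with $\rho=\lambda\in L^2$ shows $\ipd{\lambda\vec\nu,\vec p}_\Gamma=0$. Hence the whole group vanishes and \eqref{eq:continuous-mixed-energy-law} follows. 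This cross-testing, which exploits the symmetry of the saddle-point structure of $\Lag^2_\Gamma$, is the only nontrivial point. The remaining potential obstacle is justifying that Lemma~\ref{lem:bending-shape-derivative} computes the true time derivative and not merely a formal variation. This holds because its proof differentiates exactly along the given smooth evolution with velocity $\vec w$ and uses $\vec w=\vec0$ on $\partial\Gamma$, which holds for our $\vec w$.

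For the unconstrained system \eqref{eqn:weak}, the same computation applies verbatim with $\lambda_A=\lambda_V=0$ and no use of \eqref{eq:hweak5}--\eqref{eq:hweak6}. This yields the energy identity but no conservation statements.
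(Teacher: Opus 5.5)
Your proof is correct and matches the paper's argument. Conservation follows from \eqref{eq:dtva} with \eqref{eq:hweak5}--\eqref{eq:hweak6}, and the energy law comes from testing \eqref{eq:hweak1} with $\vec w$, \eqref{eq:hweak4} with $\vec p$ and \eqref{eq:hweak3} with $\lambda$, then invoking Lemma~\ref{lem:bending-shape-derivative}; your extra check (testing \eqref{eq:hweak2} with $\vec w$ to identify $\frac{\dd}{\dd t}|\Gamma(t)|$ with $-\ipd{\vec k+\bkap\vec\nu,\vec w}_\Gamma$) is not in the continuous proof but is exactly the step the paper uses in Theorem~\ref{prop:semidiscrete-energy}.
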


\begin{proof}
The conservation laws follow directly from \eqref{eq:dtva},
\eqref{eq:hweak5}, and \eqref{eq:hweak6}.  To prove the energy identity, take
$\vec\chi=\vec w$ in \eqref{eq:hweak1}, $\vec\xi=\vec p$ in
\eqref{eq:hweak4}, and $\rho=\lambda$ in \eqref{eq:hweak3}.  The
relaxed-MDR multiplier terms cancel, while the area and volume multiplier
terms vanish by \eqref{eq:hweak5} and \eqref{eq:hweak6}.  Hence
\[
  \ipd{\vec w\cdot\vec\nu,\vec w\cdot\vec\nu}_{\Gamma}
  +\B_\Gamma(\vec k,\bkap;\vec w)=0.
\]
Lemma~\ref{lem:bending-shape-derivative} identifies the second term with
$\frac{\dd}{\dd t}\E_{\bkap}(\Gamma(t))$, which proves
\eqref{eq:continuous-mixed-energy-law}.  The unconstrained case follows by
the same argument after omitting the area and volume multiplier terms.
\end{proof}

\section{Semidiscrete finite element approximations}\label{sec:semidiscrete}

For each $t\in[0,T]$, let $\Gamma^h(t)$ be a polyhedral approximation of
$\Gt$ with
\[
  \Gamma^h(t)=\bigcup\nolimits_{j=1}^J\overline{\sigma_j^h(t)},
  \quad\mbox{with}\quad\mathcal T^h(t)=\{\sigma_j^h(t)\}_{j=1}^J,\quad
  \mathcal Q^h(t)=\{\vec q^h_k(t)\}_{k=1}^K,
\]
where $\mathcal T^h(t)$ is a collection of mutually disjoint
$(d-1)$-simplices in $\bR^d$, and $\mathcal Q^h(t)$ is the set of globally
labeled vertices.  We assume that the connectivity of
$\mathcal T^h(t)$ stays unchanged in time, that the triangulation is
consistently oriented, and that all simplices remain nondegenerate for
$t\in[0,T]$.  We also assume that
$\partial_i\Gamma^h(t)=\partial_i\Gamma^h(0)$ approximates the fixed boundary
$\partial_i\Gamma(0)$, $i=1,2$.

We introduce the finite element spaces
\begin{subequations}
\begin{align}
  \mathbb V^h(\Gamma^h(t))
  &:=
  \{\chi\in C(\Gamma^h(t))\;:\;\chi|_{\sigma}\ \hbox{is affine for all }
  \sigma\in\mathcal T^h(t)\}.\\
  \mathbb V_0^h(\Gamma^h(t))
  &:=
  \{\chi\in \bV^h(\Gamma^h(t))\;:\;\chi = 0\quad\mbox{on}\quad\partial\Gamma^h(t)\}.\\
  \mathbb V_{\partial_1}^h(\Gamma^h(t))
  &:=
  \{\chi\in \bV^h(\Gamma^h(t))\;:\;\chi = 0\quad\mbox{on}\quad\partial_1\Gamma^h(t)\}.
\end{align}
\end{subequations}
We also introduce the finite element trace spaces on the
two boundary parts by
\begin{equation}\label{eq:boundary-fe-space}
\mathbb V^h(\partial_i\Gamma^h(t))
:=
\left\{
\xi^h|_{\partial_i\Gamma^h(t)}:
\xi^h\in\mathbb V^h(\Gamma^h(t))
\right\},
\qquad i=1,2.
\end{equation}
Let $\vec\mu_{\partial}^h
\in[\mathbb V^h(\partial_2\Gamma^h(t))]^d$
be a time-independent approximation of the prescribed co-normal $\vec\mu$ on
$\partial_2\Gt$.

We denote by $\vec\nu^h$ the elementwise constant unit normal to
$\Gamma^h(t)$.  We then set the discrete projection operator
  \[
  P_{\Gamma^h}:=\Id-\vec\nu^h\otimes\vec\nu^h
  =\nabs\vec\id
  \]
on each element.  For functions $f$ and $g$ that are piecewise continuous with possible jumps across interelement boundaries,
we introduce the $L^2$-inner product and the mass-lumped inner product over
$\Gamma^h(t)$ by
\begin{subequations}
\label{eq:surface-inner-products}
\renewcommand{\theHequation}{\theparentequation.\alph{equation}}
\begin{align}
  \ipd{f,~g}_{\Gamma^h}
  &:={}
  \sum_{j=1}^J
  \int_{\sigma_j^h(t)}f\,g\,\dH^{d-1},
  \label{eq:l2mass}
  \\
  \ipd{f,~g}_{\Gamma^h}^h
  &:={}
  \frac{1}{d}\sum_{j=1}^J\mathcal{H}^{d-1}(\sigma_j^h(t))
  \sum_{k=0}^{d-1}
  \lim_{\sigma_j^h(t)\ni\vec x\to\vec q^h_{j_k}}
  f(\vec x)\, g(\vec x),\label{eq:mass-lumping}
\end{align}
\end{subequations}
where $\{\vec q^h_{j_k}\}_{k=0}^{d-1}$ are the vertices of
$\sigma_j^h(t)$, and
$\mathcal{H}^{d-1}(\sigma_j^h(t))=\int_{\sigma_j^h(t)}1\,\dH^{d-1}$ is its
surface area.  Both inner products in \eqref{eq:surface-inner-products}
extend naturally to vector- and tensor-valued functions.  We define
$\ipd{\cdot,\cdot}_{\partial_2\Gamma^h}^h$ analogously as the mass-lumped
inner product over $\partial_2\Gamma^h(t)$.

\subsection{Discrete material derivatives and transport identities}

In what follows, we adopt the notation and conventions of
\cite[Sections~3.4 and~3.5]{Barrett20}.  Let
$\{\phi_k^h(\cdot,t)\}_{k=1}^K$ denote the finite element basis of
$\bV^h(\Gamma^h(t))$. We first introduce the parametrization of
$\Gamma^h(t)$ as
\begin{equation}\label{eq:sd-parametrization}
  \vec u^h(\vec q,t)
  :=
  \sum_{k=1}^K\vec q_k^h(t)\phi_k^h(\vec q,0),
  \qquad \vec q\in\Gamma^h(0),
\end{equation}
so that $\vec u^h(\cdot,t)\in[\bV^h(\Gamma^h(0))]^d$ and
$\Gamma^h(t)=\vec u^h(\Gamma^h(0),t)$.
We define
\begin{equation}
\mathcal G_T^h
=
\bigcup\nolimits_{t\in[0,T]}
\bigl(\Gamma^h(t)\times\{t\}\bigr),
\end{equation}
as the evolving polyhedral surface.  We assume that each
$\vec q_k^h(t)$, $k=1,\ldots,K$, is a $C^1$ function in time.  The discrete analogue
of the full velocity $\vec w$ is then defined as
\begin{equation}\label{eq:sd-induced-velocity}
\vec w^h(\vec z,t)
=
\sum_{k=1}^K
\left[\frac{\rd}{\rd t}\vec q_k^h(t)\right]\phi^h_k(\vec z,t)
\qquad\forall\,(\vec z,t)\in\mathcal G_T^h.
\end{equation}
Equivalently,
\begin{equation}\label{eq:sd-parametric-velocity}
  \vec w^h(\vec u^h(\vec q,t),t)
  =
  \partial_t\vec u^h(\vec q,t)
  \qquad\forall\,\vec q\in\Gamma^h(0).
\end{equation}

For $j=1,\ldots,J$, we set
\begin{equation}
\mathcal S_{j,T}^h:=
\bigcup\nolimits_{t\in[0,T]}
\bigl(\sigma_j^h(t)\times\{t\}\bigr),
\end{equation}
which is a $C^1$ evolving hypersurface.  Given
$f\in L^\infty(\mathcal{G}_T^h)$ with
$f|_{\mathcal S_{j,T}^h}\in C^1(\mathcal S_{j,T}^h)$, we define its discrete
material derivative elementwise by
\begin{equation}
\bigl(\partial_t^{\circ,h}f\bigr)(\vec z,t):=\partial_t^\circ f(\vec z,t)\qquad\forall(\vec z,t)\in\mathcal S_{j,T}^h.
\end{equation}
Then
\begin{equation}\label{eq:transported-basis}
  \partial_t^{\circ,h}\phi_k^h(\cdot,t)=0,
  \qquad k=1,\ldots,K.
\end{equation}
For $f(\cdot,t)=\sum_{k=1}^K f_k(t)\,\phi_k^h(\cdot,t)$,
\eqref{eq:transported-basis} gives
\begin{equation}
\partial_t^{\circ,h} f(\vec z,t)
=
\sum_{k=1}^K
\left[\frac{\rd}{\rd t} f_k(t)\right]\phi_k^h(\vec z,t)
\qquad\forall\,(\vec z,t)\in\mathcal G_T^h.
\end{equation}
In particular,
\begin{equation}\label{eq:sd-velocity-material-id}
  \partial_t^{\circ,h}\vec\id=\vec w^h
  \qquad\mbox{on }\mathcal G_T^h.
\end{equation}

We introduce the space--time finite element spaces
\begin{align*}
\mathbb V^h(\mathcal G_T^h)
&:=
\bigl\{
\chi\in C(\mathcal G_T^h):
\chi(\cdot,t)\in\mathbb V^h(\Gamma^h(t))
\quad\forall\,t\in[0,T]
\bigr\},
\\
\mathbb V_T^h(\mathcal G_T^h)
&:=
\bigl\{
\chi\in\mathbb V^h(\mathcal G_T^h):
\partial_t^{\circ,h}\chi\in C(\mathcal G_T^h)
\bigr\}.
\end{align*}

The following lemma is the discrete analogue of the transport theorem.
\begin{lem}[discrete transport identities]\label{lem:sd-transport}
Let $\eta^h,\zeta^h\in\mathbb V_T^h(\mathcal G_T^h)$. Then
\begin{equation}\label{eq:sd-transport}
\frac{\dd}{\dd t}\ipd{\eta^h,~\zeta^h}_{\Gamma^h}^{\star}
=
\ipd{\partial_t^{\circ,h}\eta^h,~\zeta^h}_{\Gamma^h}^{\star}
+
\ipd{\eta^h,~\partial_t^{\circ,h}\zeta^h}_{\Gamma^h}^{\star}
+
\ipd{\eta^h\zeta^h,~\nabs\cdot\vec w^h}_{\Gamma^h}^{\star},
\end{equation}
where $\star$ denotes either the standard or the mass-lumped inner product in
\eqref{eq:surface-inner-products}.  Moreover, \eqref{eq:sd-transport} remains
valid if $\eta^h$ and $\zeta^h$ are $C^1$ on each $\mathcal S_{j,T}^h$.
\end{lem}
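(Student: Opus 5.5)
The identity is local to each simplex, so we will prove it elementwise and then sum over $j=1,\dots,J$.

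Fix $\sigma_j^h(t)$. Because $\vec u^h(\cdot,t)$ is affine on each element of $\Gamma^h(0)$, the map $\sigma_j^h(0)\to\sigma_j^h(t)$ is affine. Hence $\mathcal S^h_{j,T}$ is a $C^1$ evolving flat hypersurface, with a boundary that moves with the velocity $\vec w^h$. On this element we apply the smooth Reynolds transport theorem, i.e.\ the first equality in \eqref{eq:transport}, to the product $f=\eta^h\zeta^h$, which is $C^1$ on $\mathcal S_{j,T}^h$. That first equality has no boundary term: it comes from pulling back to the fixed reference element. Concretely, we write
\[
\int_{\sigma_j^h(t)}f\,\dH^{d-1}=\int_{\sigma_j^h(0)}f(\vec u^h(\vec q,t),t)\,J_j(t)\,\dH^{d-1}(\vec q),
\]
where $J_j(t)$ is the area-ratio Jacobian. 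We then use $\frac{\rd}{\rd t}J_j=J_j\,\nabs\cdot\vec w^h$ on $\sigma_j^h(t)$, together with the product rule $\partial_t^{\circ,h}(\eta^h\zeta^h)=(\partial_t^{\circ,h}\eta^h)\zeta^h+\eta^h\,\partial_t^{\circ,h}\zeta^h$. This gives \eqref{eq:sd-transport} on each element for the standard inner product. Since $\nabs\cdot\vec w^h$ may be discontinuous across elements, summing the elementwise identities gives the claim for $\ipd{\cdot,\cdot}_{\Gamma^h}$, and the piecewise-$C^1$ extension follows in the same way.

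For the mass-lumped inner product, the elementwise contribution is $\frac1d\mathcal H^{d-1}(\sigma_j^h(t))\sum_k (\eta^h\zeta^h)|_{\sigma_j}(\vec q^h_{j_k}(t),t)$. Each vertex value is a function of $t$ whose derivative is the corresponding limit of $\partial_t^{\circ,h}(\eta^h\zeta^h)$. This holds because the vertices move with the parametrization, and by continuity of $\partial_t^{\circ,h}\eta^h$ (or one-sided continuity on the element, in the piecewise-$C^1$ case). The area derivative is $\frac{\rd}{\rd t}\mathcal H^{d-1}(\sigma_j^h(t))=\int_{\sigma_j^h}\nabs\cdot\vec w^h\,\dH^{d-1}=\mathcal H^{d-1}(\sigma_j^h)\,(\nabs\cdot\vec w^h)|_{\sigma_j}$, since $\nabs\cdot\vec w^h$ is constant on each element for piecewise linear $\vec w^h$. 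Hence the derivative of the area factor times the vertex sum is exactly the lumped term $\ipd{\eta^h\zeta^h,\nabs\cdot\vec w^h}^h_{\Gamma^h}$, with elementwise constant $\nabs\cdot\vec w^h$ evaluated at the vertices. The product rule then gives the other two terms.

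The main point needing care is the Jacobian formula $\frac{\rd}{\rd t}J_j=J_j\,\nabs\cdot\vec w^h$ on a flat simplex. We would verify it by writing $J_j=\sqrt{\det(A^TA)}$, where $A(t)$ is the $d\times(d-1)$ edge matrix of $\sigma_j^h(t)$. Differentiating gives $\dot A=(\nabs\vec w^h)A$, and therefore $\frac{\rd}{\rd t}\log J_j=\operatorname{tr}\bigl(P_{\Gamma^h}\nabs\vec w^h\bigr)=\nabs\cdot\vec w^h$. Alternatively, we can cite \cite[Lemma~32 and Section~3.5]{Barrett20} for this step.
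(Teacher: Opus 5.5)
Your proof is correct and follows the paper's route: the paper cites \cite[Theorem~70]{Barrett20} for $\eta^h,\zeta^h\in\mathbb V_T^h(\mathcal G_T^h)$, and gets the piecewise-$C^1$ case by applying the transport identity on each evolving simplex $\sigma_j^h(t)$ and summing over $j$. You additionally supply the elementwise details behind that citation — the pullback with $\frac{\rd}{\rd t}\log J_j=\nabs\cdot\vec w^h$ obtained from the edge matrix, and the vertex-value and area-derivative computation for the mass-lumped product — and these are sound given the assumed nondegeneracy of the simplices.
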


\begin{proof}
For functions in $\mathbb V_T^h(\mathcal G_T^h)$, the result is
\cite[Theorem~70]{Barrett20}.  For piecewise $C^1$ functions, we apply the
transport identity on each evolving simplex $\sigma_j^h(t)$ and sum over
$j=1,\ldots,J$.
\end{proof}

For later use, we have the discrete analogues of \eqref{eq:vidennu} and \eqref{eq:videncomu}; see \cite[Lemmas~72 and~73]{Barrett20}:
\begin{subequations}\label{eqn:dviden}
\renewcommand{\theHequation}{\theparentequation.\alph{equation}}
\begin{align}
\partial_t^{\circ,h}\vec\nu^h
&=-[\nabs\vec w^h]^T\vec\nu^h,
\label{eq:dvidennu}\\
\partial_t^{\circ,h}(\nabs\cdot\vec\varphi^h)
-\nabs\cdot(\partial_t^{\circ,h}\vec\varphi^h)
&=
\bigl[
\nabs\vec w^h-P_{\Gamma^h}\mathcal D(\vec w^h)P_{\Gamma^h}
\bigr]:\nabs\vec\varphi^h,
\label{eq:dvidencomu}
\end{align}
\end{subequations}
almost everywhere on $\Gamma^h(t)$, for
$\vec\varphi^h\in[\mathbb V_T^h(\mathcal G_T^h)]^d$.

Analogously to \eqref{eq:dtva}, the following discrete identities hold:
\begin{subequations}\label{eq:sd-area-volume-derivatives}
\renewcommand{\theHequation}{\theparentequation.\alph{equation}}
\begin{align}
  \frac{\dd}{\dd t}|\Gamma^h(t)|
  &=
  \ipd{1,\nabs\cdot\vec w^h}_{\Gamma^h}
  =
  \ipd{\nabs\vec\id,\nabs\vec w^h}_{\Gamma^h},
  \label{eq:sd-aread}
  \\
  \frac{\dd}{\dd t}\vol(\Gamma^h(t))
  &=
  \ipd{\vec w^h,\vec\nu^h}_{\Gamma^h}
  =
  \ipd{\vec w^h,\vec\nu^h}_{\Gamma^h}^h,
  \label{eq:sd-volumed}
\end{align}
\end{subequations}
Here \eqref{eq:sd-aread} follows from \eqref{eq:sd-transport}, whereas
\eqref{eq:sd-volumed} follows from \cite[Theorem~71]{Barrett20}.  In the open
case, $\vol(\Gamma^h(t))$ denotes the volume enclosed by the union of
$\Gamma^h(t)$ and the fixed cap.

\subsection{The semidiscrete scheme}

Let $\bkap^h\in\mathbb V_T^h(\mathcal G_T^h)$ approximate $\bkap$ and be
determined by
\begin{equation}\label{eq:sdh-bkap-transport}
0=
\ipd{\partial_t^{\circ,h}\bkap^h,\varphi^h}_{\Gamma^h}^h
-
\ipd{\vec w^h\cdot\nabs\bkap^h,\varphi^h}_{\Gamma^h}^h
\qquad\forall\,\varphi^h\in\mathbb V^h(\Gamma^h(t)).
\end{equation}
Given $\vec\chi^h\in[\mathbb V^h(\Gamma^h(t))]^d$, we define the discrete
convection operator
$\mathcal C_h(\bkap^h;\vec\chi^h)\in\mathbb V^h(\Gamma^h(t))$ by
\begin{equation}\label{eq:sd-bkap-projection}
  \ipd{\mathcal C_h(\bkap^h;\vec\chi^h),\varphi^h}_{\Gamma^h}^h
  =
  \ipd{\vec\chi^h\cdot\nabs\bkap^h,\varphi^h}_{\Gamma^h}^h
  \qquad\forall\,\varphi^h\in\mathbb V^h(\Gamma^h(t)).
\end{equation}
Thus $\mathcal C_h(\bkap^h;\vec\chi^h)$ is the mass-lumped projection of the
transport rate $\vec\chi^h\cdot\nabs\bkap^h$.  Taking
$\vec\chi^h=\vec w^h$ in \eqref{eq:sd-bkap-projection} and using
\eqref{eq:sdh-bkap-transport}, we obtain
\[
  \partial_t^{\circ,h}\bkap^h
  =
  \mathcal C_h(\bkap^h;\vec w^h)
  \qquad\mbox{in }\mathbb V^h(\Gamma^h(t)).
\]

We now consider the semidiscrete approximations of the variational formulations
in Section~\ref{sec:vf}.  The discrete analogue of
$\B_\Gamma(\vec k,\bkap;\vec\chi)$ in
\eqref{eq:bending-shape-derivative} is defined as
\begin{align}\label{eq:sd-bending-variation}
\B_h(\vec k^h,\bkap^h;\vec\chi^h)
:={}&
-
\ipd{\nabs\vec k^h,\nabs\vec\chi^h}_{\Gamma^h}
-
\ipd{\nabs\cdot\vec k^h,\nabs\cdot\vec\chi^h}_{\Gamma^h}
+
\ipd{
(\nabs\vec k^h)^T,
\mathcal D(\vec\chi^h)(\nabs\vec\id)^T
}_{\Gamma^h}^h\nonumber
\\
&-
\ipd{
\frac12|\vec k^h|^2+\bkap^h\,\vec\nu^h\cdot\vec k^h,
\nabs\vec\id:\nabs\vec\chi^h
}_{\Gamma^h}^h\nonumber 
\\
&+
\ipd{\bkap^h(\nabs\vec\chi^h)^T\vec\nu^h,\vec k^h}_{\Gamma^h}^h-
\ipd{
\mathcal C_h(\bkap^h;\vec\chi^h)\,\vec\nu^h,
\vec k^h
}_{\Gamma^h}^h .
\end{align}
Then the semidiscrete scheme for the Helfrich system
\eqref{eqn:hweak} is as follows.  Given the initial polyhedral surface
$\Gamma^h(0)$ and
$\bkap_0^h\in\mathbb V^h(\Gamma^h(0))$, for all $t\in(0,T]$ we find
$\Gamma^h(t)=\vec u^h(\Gamma^h(0),t)$ with
$\vec u^h\in[\mathbb V^h(\Gamma^h(0))]^d$, a field
$\bkap^h\in\mathbb V_T^h(\mathcal G_T^h)$ satisfying
$\bkap^h(\cdot,0)=\bkap_0^h$, and
\[
(\vec w^h,\vec k^h,\lambda^h,\vec p^h,\lambda_A^h,\lambda_V^h)
\in[\bV_0^h(\Gamma^h(t))]^d
\times[\bV_{\partial_1}^h(\Gamma^h(t))]^d
\times\bV_0^h(\Gamma^h(t))
\times[\bV_0^h(\Gamma^h(t))]^d\times\bR\times\bR
\]
such that the kinematic relation \eqref{eq:sd-parametric-velocity} holds and
\begin{subequations}\label{eq:sd-helfrich-system}
\renewcommand{\theHequation}{\theparentequation.\alph{equation}}
\begin{align}
0={}&
\ipd{\vec w^h\cdot\vec\nu^h,\vec\chi^h\cdot\vec\nu^h}_{\Gamma^h}^h
+
\B_h(\vec k^h,\bkap^h;\vec\chi^h) +
\ipd{P_{\Gamma^h}\vec\chi^h,\vec p^h}_{\Gamma^h}^h\nonumber\\
		  &\quad
+\alpha\ipd{\nabs\vec\chi^h,\nabs\vec p^h}_{\Gamma^h}+
\lambda_A^h\ipd{\vec k^h+\bkap^h\vec\nu^h,\vec\chi^h}_{\Gamma^h}^h
+
\lambda_V^h\ipd{\vec\chi^h,\vec\nu^h}_{\Gamma^h}^h,
  \label{eq:sdh-u}\\[0.4em]
0={}&\ipd{\vec k^h+\bkap^h\vec\nu^h,\vec\eta^h}_{\Gamma^h}^h
  +
  \ipd{\nabs\vec\id,\nabs\vec\eta^h}_{\Gamma^h}
  -\ipd{\vec\mu_\partial^h,\vec\eta^h}_{\partial_2\Gamma^h},
  \label{eq:sdh-k}\\[0.4em]
0={}&
\ipd{\rho^h\,\vec\nu^h,\vec p^h}_{\Gamma^h}^h,
\label{eq:sdh-pnormal}\\[0.4em]
0={}&
\ipd{P_{\Gamma^h}\vec w^h,\vec\xi^h}_{\Gamma^h}^h
+
\ipd{\lambda^h\vec\nu^h,\vec\xi^h}_{\Gamma^h}^h
+
\alpha\ipd{\nabs\vec w^h,\nabs\vec\xi^h}_{\Gamma^h},
\label{eq:sdh-mdr}\\[0.4em]
0={}&
\ipd{\vec w^h,\vec\nu^h}_{\Gamma^h}^h,
\label{eq:sdh-volume}\\[0.4em]
0={}&
\ipd{\vec k^h+\bkap^h\vec\nu^h,\vec w^h}_{\Gamma^h}^h,
\label{eq:sdh-area}
\end{align}
\end{subequations}
for all $(\vec\chi^h,\vec\eta^h,\rho^h,\vec\xi^h)$ in
\[
[\mathbb V_0^h(\Gamma^h(t))]^d
\times[\mathbb V_{\partial_1}^h(\Gamma^h(t))]^d
\times\mathbb V_0^h(\Gamma^h(t))
\times[\mathbb V_0^h(\Gamma^h(t))]^d,
\]
together with \eqref{eq:sdh-bkap-transport}.

 The semidiscrete volume-preserving Willmore flow is
obtained by setting $\lambda_A^h=0$ and omitting \eqref{eq:sdh-area}.
Similarly, the semidiscrete approximation of the Willmore flow corresponding
to \eqref{eqn:weak} is
obtained by setting
$\lambda_A^h=\lambda_V^h=0$ and omitting
\eqref{eq:sdh-volume}--\eqref{eq:sdh-area}.

For $(\Gamma^h(t),\vec k^h(t),\bkap^h(t))$, we define the semidiscrete bending
energy by
\begin{equation}\label{eq:sd-bending-energy}
  \E_{\bkap^h}^h(\Gamma^h(t))
  :=
  \frac12\ipd{|\vec k^h|^2,1}_{\Gamma^h}^h .
\end{equation}
The following lemma gives the variation of the discrete bending energy.
\begin{lem}[semidiscrete bending variation]\label{lem:sd-bending-variation}
Let
$(\bkap^h,\vec w^h,\vec k^h,\lambda^h,\vec p^h,
\lambda_A^h,\lambda_V^h)$ satisfy
\eqref{eq:sd-helfrich-system} and \eqref{eq:sdh-bkap-transport}, with
$\bkap^h\in\mathbb V_T^h(\mathcal G_T^h)$ and
$\vec k^h\in[\mathbb V_T^h(\mathcal G_T^h)]^d$.
Then 
\begin{equation}\label{eq:sd-energy-variation}
  \frac{\dd}{\dd t}\E_{\bkap^h}^h(\Gamma^h(t))
  =
  \B_h(\vec k^h,\bkap^h;\vec w^h).
\end{equation}
\end{lem}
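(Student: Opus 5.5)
The plan is to mimic the proof of Lemma~\ref{lem:bending-shape-derivative} at the discrete level, using the discrete transport identity (Lemma~\ref{lem:sd-transport}) and the discrete identities \eqref{eqn:dviden}. Take a transported test function $\vec\eta^h\in[\mathbb V_T^h(\mathcal G_T^h)]^d$ with $\partial_t^{\circ,h}\vec\eta^h=\vec0$, i.e.\ with time-independent nodal values, and with $\vec\eta^h$ vanishing on $\partial_1\Gamma^h$. By \eqref{eq:transported-basis} such functions remain admissible in \eqref{eq:sdh-k} for all $t$. Since the boundary vertices are fixed ($\vec w^h\in[\mathbb V_0^h]^d$), the boundary mesh, its measure, and $\vec\mu_\partial^h$ are time-independent. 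Hence $\frac{\dd}{\dd t}\ipd{\vec\mu_\partial^h,\vec\eta^h}_{\partial_2\Gamma^h}=0$, and differentiating \eqref{eq:sdh-k} in time gives
\[
\frac{\dd}{\dd t}\ipd{\vec k^h,\vec\eta^h}_{\Gamma^h}^h
=-\frac{\dd}{\dd t}\ipd{\nabs\vec\id,\nabs\vec\eta^h}_{\Gamma^h}
-\frac{\dd}{\dd t}\ipd{\bkap^h\vec\nu^h,\vec\eta^h}_{\Gamma^h}^h .
\]

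Each term is then expanded as follows.

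\emph{Left-hand side.} Lemma~\ref{lem:sd-transport} (mass-lumped version) gives
\[
\ipd{\partial_t^{\circ,h}\vec k^h,\vec\eta^h}^h_{\Gamma^h}+\ipd{\vec k^h\cdot\vec\eta^h,\nabs\cdot\vec w^h}^h_{\Gamma^h}.
\]

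\emph{First right-hand term.} Since $\nabs\vec\id:\nabs\vec\eta^h=\nabs\cdot\vec\eta^h$ elementwise, we write it as $\frac{\dd}{\dd t}\ipd{\nabs\cdot\vec\eta^h,1}_{\Gamma^h}$. Then apply Lemma~\ref{lem:sd-transport} in its piecewise-$C^1$ form together with \eqref{eq:dvidencomu} and $\partial_t^{\circ,h}\vec\eta^h=\vec0$. Exactly as in \eqref{eq:continuous-right-divergence}, this yields
\[
\ipd{\nabs\vec\eta^h,\nabs\vec w^h}+\ipd{\nabs\cdot\vec\eta^h,\nabs\cdot\vec w^h}-\ipd{(\nabs\vec\eta^h)^T,\mathcal D(\vec w^h)(\nabs\vec\id)^T},
\]
all in the exact inner product. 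Because the integrand is elementwise constant, the last term may equally be written with the mass-lumped product, matching \eqref{eq:sd-bending-variation}.

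\emph{Second right-hand term.} Write $\partial_t^{\circ,h}(\bkap^h\vec\nu^h)=(\partial_t^{\circ,h}\bkap^h)\vec\nu^h-\bkap^h(\nabs\vec w^h)^T\vec\nu^h$ using \eqref{eq:dvidennu}, and apply the transport formula for piecewise $C^1$ functions with mass lumping. The key point is that mass lumping only evaluates at vertices, and at the vertices we have $\partial_t^{\circ,h}\bkap^h=\mathcal C_h(\bkap^h;\vec w^h)$ by \eqref{eq:sdh-bkap-transport} and \eqref{eq:sd-bkap-projection}. This gives
\[
\ipd{\mathcal C_h(\bkap^h;\vec w^h)\vec\nu^h,\vec\eta^h}^h-\ipd{\bkap^h(\nabs\vec w^h)^T\vec\nu^h,\vec\eta^h}^h+\ipd{\bkap^h\vec\nu^h\cdot\vec\eta^h,\nabs\cdot\vec w^h}^h.
\]
The vertex values of $\vec\nu^h$ are taken elementwise here, so the mass-lumped transport identity has to be applied element by element. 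This is justified by the last sentence of Lemma~\ref{lem:sd-transport}.

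Next, freeze a time $t_0$ and choose the transported $\vec\eta^h$ whose nodal values equal those of $\vec k^h(t_0)$. This choice is admissible since $\vec k^h\in[\mathbb V_{\partial_1}^h]^d$. Combining the three expansions produces the discrete analogue of \eqref{eq:cmck}, with $\ipd{\partial_t^{\circ,h}\vec k^h,\vec k^h}^h$ on the left. Finally, Lemma~\ref{lem:sd-transport} applied to \eqref{eq:sd-bending-energy} gives
\[
\frac{\dd}{\dd t}\E^h_{\bkap^h}=\ipd{\partial_t^{\circ,h}\vec k^h,\vec k^h}^h+\tfrac12\ipd{|\vec k^h|^2,\nabs\cdot\vec w^h}^h.
\]
Substituting, and using $\nabs\cdot\vec w^h=\nabs\vec\id:\nabs\vec w^h$, the terms in $|\vec k^h|^2$ and $\bkap^h\vec\nu^h\cdot\vec k^h$ combine into the fourth term of \eqref{eq:sd-bending-variation}. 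This gives $\B_h(\vec k^h,\bkap^h;\vec w^h)$.

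The main obstacle is bookkeeping of which inner product (exact versus mass-lumped) each term carries. The goal is to reproduce exactly the mixture in \eqref{eq:sd-bending-variation}. Care is needed for products of elementwise-constant quantities ($\vec\nu^h$, $\nabs\vec w^h$) with piecewise linears under mass lumping, and in checking that the commutator term is elementwise constant so that the two products coincide.
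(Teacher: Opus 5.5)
Your proposal is correct and follows essentially the same route as the paper's proof. Both take a transported test function with $\partial_t^{\circ,h}\vec\eta^h=\vec0$, differentiate \eqref{eq:sdh-k}, and expand the three terms via Lemma~\ref{lem:sd-transport} and \eqref{eqn:dviden} using $\partial_t^{\circ,h}\bkap^h=\mathcal C_h(\bkap^h;\vec w^h)$; they then set $\vec\eta^h(t_0)=\vec k^h(t_0)$ and combine with the mass-lumped transport of \eqref{eq:sd-bending-energy}. Your observation that the $\mathcal D(\vec w^h)$ term is elementwise constant, so the exact and mass-lumped products coincide there, supplies a detail the paper leaves implicit in \eqref{eq:sd-right-divergence}.
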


\begin{proof}
The argument follows that of
Lemma~\ref{lem:bending-shape-derivative} and is analogous to  the proof of \cite[Theorem 3.1]{pwfade}.  Fix a time $t$ and choose a
transported test function
$\vec\eta^h\in[\mathbb V_{\partial_1}^h(\Gamma^h(t))]^d$ with
$\partial_t^{\circ,h}\vec\eta^h=\vec0$.  The boundary term is time-independent;
hence
\[
  \frac{\dd}{\dd t}
  \ipd{\vec\mu_\partial^h,\vec\eta^h}_{\partial_2\Gamma^h}=0.
\]
Differentiating \eqref{eq:sdh-k} and applying \eqref{eq:sd-transport} with
the mass-lumped inner product gives
\begin{equation}\label{eq:sd-diff-curvature}
\ipd{\partial_t^{\circ,h}\vec k^h,\vec\eta^h}_{\Gamma^h}^h
+
\ipd{\vec k^h\cdot\vec\eta^h,\nabs\cdot\vec w^h}_{\Gamma^h}^h
=
-\frac{\dd}{\dd t}
\ipd{\nabs\vec\id,\nabs\vec\eta^h}_{\Gamma^h}
-\frac{\dd}{\dd t}
\ipd{\bkap^h\vec\nu^h,\vec\eta^h}_{\Gamma^h}^h .
\end{equation}
We recall
$\nabs\vec\id:\nabs\vec\eta^h=\nabs\cdot\vec\eta^h$ on each simplex.
Using the standard transport formula and \eqref{eq:dvidencomu} then yields
\begin{equation}\label{eq:sd-right-divergence}
\begin{aligned}
\frac{\dd}{\dd t}
\ipd{\nabs\vec\id,\nabs\vec\eta^h}_{\Gamma^h}
={}&
\ipd{\nabs\vec w^h,\nabs\vec\eta^h}_{\Gamma^h}
+
\ipd{\nabs\cdot\vec\eta^h,\nabs\cdot\vec w^h}_{\Gamma^h}
\\
&-
\ipd{
(\nabs\vec\eta^h)^T,
\mathcal D(\vec w^h)(\nabs\vec\id)^T
}_{\Gamma^h}^h .
\end{aligned}
\end{equation}
Similarly, 
\eqref{eq:dvidennu}, \eqref{eq:sd-bkap-projection}, and
\eqref{eq:sdh-bkap-transport} give
\begin{equation}\label{eq:sd-right-normal}
\begin{aligned}
\frac{\dd}{\dd t}
\ipd{\bkap^h\vec\nu^h,\vec\eta^h}_{\Gamma^h}^h
={}&
\ipd{
\mathcal C_h(\bkap^h;\vec w^h)\vec\nu^h,
\vec\eta^h
}_{\Gamma^h}^h
-
\ipd{
\bkap^h(\nabs\vec w^h)^T\vec\nu^h,
\vec\eta^h
}_{\Gamma^h}^h
\\
&\quad+
\ipd{
\bkap^h\vec\nu^h\cdot\vec\eta^h,
\nabs\cdot\vec w^h
}_{\Gamma^h}^h .
\end{aligned}
\end{equation}
At the fixed time $t$, choose the transported test function above such that
$\vec\eta^h(t)=\vec k^h(t)$.  Combining
\eqref{eq:sd-diff-curvature}--\eqref{eq:sd-right-normal}, and applying the
mass-lumped transport formula to \eqref{eq:sd-bending-energy}, we obtain
\eqref{eq:sd-energy-variation}.
\end{proof}

The following theorem gives the energy dissipation law on semidiscrete level. 

\begin{thm}[semidiscrete conservation and energy stability]
\label{prop:semidiscrete-energy}
Let
$(\bkap^h,\vec w^h,\vec k^h,\lambda^h,\vec p^h,
\lambda_A^h,\lambda_V^h)$ satisfy
\eqref{eq:sd-helfrich-system} and \eqref{eq:sdh-bkap-transport}, with
$\bkap^h\in\mathbb V_T^h(\mathcal G_T^h)$ and
$\vec k^h\in[\mathbb V_T^h(\mathcal G_T^h)]^d$.  Then
\begin{equation}
  \frac{\dd}{\dd t}|\Gamma^h(t)|=0,
  \qquad
  \frac{\dd}{\dd t}\vol(\Gamma^h(t))=0,
\end{equation}
and the semidiscrete bending energy satisfies
\begin{equation}\label{eq:sd-energy-law}
  \frac{\dd}{\dd t}\E_{\bkap^h}^h(\Gamma^h(t))
  +
  \ipd{
  \vec w^h\cdot\vec\nu^h,
  \vec w^h\cdot\vec\nu^h
  }_{\Gamma^h}^h
  =
  0 .
\end{equation}   
\end{thm}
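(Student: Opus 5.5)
The plan is to mimic the proof of Theorem~\ref{thm:continuous-energy}: use the semidiscrete transport identities for the conservation statements, and test the system \eqref{eq:sd-helfrich-system} with the discrete solution itself for the energy law. All the geometric work is already contained in Lemma~\ref{lem:sd-bending-variation}, so only algebraic cancellations remain.

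\textbf{Volume.} By \eqref{eq:sd-volumed}, $\frac{\dd}{\dd t}\vol(\Gamma^h(t))=\ipd{\vec w^h,\vec\nu^h}_{\Gamma^h}^h$. This vanishes by \eqref{eq:sdh-volume}.

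\textbf{Area.} This is the step needing care, because \eqref{eq:sd-aread} gives the area derivative as $\ipd{\nabs\vec\id,\nabs\vec w^h}_{\Gamma^h}$, whereas the constraint \eqref{eq:sdh-area} is written with the mass-lumped curvature term. I bridge them with the curvature equation \eqref{eq:sdh-k}, taking $\vec\eta^h=\vec w^h$. This is admissible because $\vec w^h\in[\mathbb V_0^h(\Gamma^h(t))]^d\subset[\mathbb V_{\partial_1}^h(\Gamma^h(t))]^d$. Since $\vec w^h=\vec 0$ on $\partial\Gamma^h(t)$, the boundary term $\ipd{\vec\mu_\partial^h,\vec w^h}_{\partial_2\Gamma^h}$ vanishes. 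Hence
\[
\frac{\dd}{\dd t}|\Gamma^h(t)|=\ipd{\nabs\vec\id,\nabs\vec w^h}_{\Gamma^h}=-\ipd{\vec k^h+\bkap^h\vec\nu^h,\vec w^h}_{\Gamma^h}^h=0
\]
by \eqref{eq:sdh-area}.

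\textbf{Energy law.} I choose the test functions $\vec\chi^h=\vec w^h$ in \eqref{eq:sdh-u}, $\vec\xi^h=\vec p^h$ in \eqref{eq:sdh-mdr}, and $\rho^h=\lambda^h$ in \eqref{eq:sdh-pnormal}. The last choice is legitimate because $\lambda^h\in\mathbb V_0^h(\Gamma^h(t))$ lies in the test space of \eqref{eq:sdh-pnormal}.
\begin{itemize}
\item From \eqref{eq:sdh-pnormal}, $\ipd{\lambda^h\vec\nu^h,\vec p^h}_{\Gamma^h}^h=0$.
\item Then \eqref{eq:sdh-mdr} reduces to $\ipd{P_{\Gamma^h}\vec w^h,\vec p^h}_{\Gamma^h}^h+\alpha\ipd{\nabs\vec w^h,\nabs\vec p^h}_{\Gamma^h}=0$.
\item This is exactly the pair of relaxed-MDR multiplier terms that appears in \eqref{eq:sdh-u} with $\vec\chi^h=\vec w^h$. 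The match uses the symmetry of the Frobenius inner product in the $\alpha$-term and the fact that both $\ipd{P_{\Gamma^h}\cdot,\cdot}$ terms have the same form. So these terms drop out.
\item The $\lambda_A^h$ and $\lambda_V^h$ terms in \eqref{eq:sdh-u} vanish by \eqref{eq:sdh-area} and \eqref{eq:sdh-volume}.
\end{itemize}
What remains is
\[
\ipd{\vec w^h\cdot\vec\nu^h,\vec w^h\cdot\vec\nu^h}_{\Gamma^h}^h+\B_h(\vec k^h,\bkap^h;\vec w^h)=0.
\]
Lemma~\ref{lem:sd-bending-variation} identifies the second term with $\frac{\dd}{\dd t}\E_{\bkap^h}^h(\Gamma^h(t))$, which yields \eqref{eq:sd-energy-law}. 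The regularity hypotheses $\bkap^h\in\mathbb V_T^h(\mathcal G_T^h)$ and $\vec k^h\in[\mathbb V_T^h(\mathcal G_T^h)]^d$ are exactly those required by that lemma.

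I expect no real obstacle. The only points needing care are the mismatch between the exact and mass-lumped inner products in the area identity, handled by the curvature equation, and the admissibility of each test function. The volume-preserving and unconstrained Willmore variants follow by the same argument with the corresponding multipliers set to zero.
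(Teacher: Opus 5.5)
Your proposal is correct and follows the paper's proof essentially step by step. Volume conservation comes from \eqref{eq:sd-volumed} and \eqref{eq:sdh-volume}. Area conservation comes from testing \eqref{eq:sdh-k} with $\vec\eta^h=\vec w^h$, dropping the boundary term because $\vec w^h=\vec 0$ on $\partial\Gamma^h$, and invoking \eqref{eq:sdh-area}. The energy law comes from the test choices $\vec\chi^h=\vec w^h$, $\vec\xi^h=\vec p^h$, $\rho^h=\lambda^h$, followed by Lemma~\ref{lem:sd-bending-variation}. Your admissibility checks ($\vec w^h\in[\mathbb V_0^h]^d\subset[\mathbb V_{\partial_1}^h]^d$ and $\lambda^h\in\mathbb V_0^h$) are correct and make explicit what the paper leaves implicit.
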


\begin{proof}
The volume conservation follows immediately from
\eqref{eq:sd-area-volume-derivatives} and
\eqref{eq:sdh-volume}.  For the surface area, we choose
$\vec\eta^h=\vec w^h$ in \eqref{eq:sdh-k} and obtain
\[
  \ipd{\nabs\vec\id,\nabs\vec w^h}_{\Gamma^h}
  =
  -
  \ipd{\vec k^h+\bkap^h\vec\nu^h,\vec w^h}_{\Gamma^h}^h
  +
  \ipd{\vec\mu_\partial^h,\vec w^h}_{\partial_2\Gamma^h}
  =
  0,
\]
where the boundary term vanishes because $\vec w^h=\vec0$ on
$\partial\Gamma^h$, and the last equality follows from \eqref{eq:sdh-area}.
Hence
$\frac{\dd}{\dd t}|\Gamma^h(t)|=0$.

It remains to prove the energy identity.  Taking $\vec\chi^h=\vec w^h$ in
\eqref{eq:sdh-u}, $\vec\xi^h=\vec p^h$ in
\eqref{eq:sdh-mdr}, and $\rho^h=\lambda^h$ in
\eqref{eq:sdh-pnormal}, the relaxed-MDR multiplier terms cancel:
\[
  \ipd{P_{\Gamma^h}\vec w^h,\vec p^h}_{\Gamma^h}^h
  +
  \alpha\ipd{\nabs\vec w^h,\nabs\vec p^h}_{\Gamma^h}
  =
  -\ipd{\lambda^h\vec\nu^h,\vec p^h}_{\Gamma^h}^h
  =
  0 .
\]
The area and volume multiplier terms vanish by
\eqref{eq:sdh-area} and \eqref{eq:sdh-volume}.  Therefore
\eqref{eq:sdh-u} gives
\[
  \ipd{\vec w^h\cdot\vec\nu^h,\vec w^h\cdot\vec\nu^h}_{\Gamma^h}^h
  +
  \B_h(\vec k^h,\bkap^h;\vec w^h)
  =
  0.
\]
Using Lemma~\ref{lem:sd-bending-variation} with
\eqref{eq:sd-energy-variation} yields \eqref{eq:sd-energy-law}. 
\end{proof}

Under the same regularity assumptions, for the semidiscrete Willmore system
obtained from
\eqref{eq:sd-helfrich-system} by omitting
\eqref{eq:sdh-volume}--\eqref{eq:sdh-area} and setting
$\lambda_A^h=\lambda_V^h=0$, the same argument gives
\[
  \frac{\dd}{\dd t}\E_{\bkap^h}^h(\Gamma^h(t))
  +
  \ipd{
  \vec w^h\cdot\vec\nu^h,
  \vec w^h\cdot\vec\nu^h
  }_{\Gamma^h}^h
  =
  0 .
\]

\section{Fully discrete scheme}\label{sec:fully-discrete}

We employ the uniform partition of the time interval
\[
[0,T]=\bigcup\nolimits_{i=1}^M[t_{i-1},t_i],
\qquad \ttau=T/M,
\qquad t_m=m\ttau.
\]
Let $\Gamma^0$ be a polyhedral approximation of the initial surface
$\Gamma(0)$:
\[
  \Gamma^0=\bigcup\nolimits_{j=1}^J\overline{\sigma_j^0},
  \quad
  \mathcal T^0=\{\sigma_j^0\}_{j=1}^J,
  \quad
  \mathcal Q^0=\{\vec q_k^0\}_{k=1}^K,
\]
where $\mathcal T^0$ is a collection of mutually disjoint $(d-1)$-simplices
in $\bR^d$, and $\mathcal Q^0$ is the set of globally labeled vertices.  We
introduce the scalar reference finite element space
\[
  \mathbb V^0
  :=
  \bigl\{
  \chi^h\in C(\Gamma^0):
  \chi^h|_{\sigma}\ \hbox{is affine for all }
  \sigma\in\mathcal T^0
  \bigr\}.
\]

For $m\geq 1$, let
$\vec u^m\in[\mathbb V^0]^d$ be the approximation of
$\vec u(\cdot,t_m)$.  This defines the
polyhedral surface
$\Gamma^m=\vec u^m(\Gamma^0)$, which has the same connectivity as $\Gamma^0$:
\[
  \Gamma^m=\bigcup\nolimits_{j=1}^J\overline{\sigma_j^m},
  \quad
  \mathcal T^m=\{\sigma_j^m\}_{j=1}^J,
  \quad
  \mathcal Q^m=\{\vec q_k^m\}_{k=1}^K,
\]
where $\mathcal T^m$ and $\mathcal Q^m$ are the collections of simplices and
vertices of $\Gm$, respectively.  For the open-surface case, we impose
$\partial_i\Gamma^m=\partial_i\Gamma^0$ as an approximation of the fixed
boundary $\partial_i\Gamma(0)$, $i=1,2$.

Let $\{\vec q_{j_k}^m\}_{k=0}^{d-1}$ be the consistently oriented vertices of
$\sigma_j^m$.  The geometric unit normal is defined elementwise by
\begin{align}\label{eq:geonormal}
\vec\nu_j^m:=\vec\nu^m|_{\sigma_j^m}
=
\frac{\vec N(\sigma_j^m)}{|\vec N(\sigma_j^m)|},
\qquad 
\vec N(\sigma_j^m)
=
(\vec q_{j_1}^m-\vec q_{j_0}^m)\wedge\cdots\wedge
(\vec q_{j_{d-1}}^m-\vec q_{j_0}^m),
\end{align}
where $\wedge$ denotes the generalized wedge product producing the oriented
normal.  The discrete tangential projection on $\Gamma^m$ is
\[
  P_{\Gamma^m}:=\Id-\vec\nu^m\otimes\vec\nu^m .
\]

On $\Gamma^m$ we introduce the finite element spaces
\begin{subequations}
\renewcommand{\theHequation}{\theparentequation.\alph{equation}}
\begin{align*}
\mathbb V^m
&:=
\bigl\{
\chi^h\in C(\Gamma^m):
\chi^h|_{\sigma}\ \hbox{is affine for all }
\sigma\in\mathcal T^m
\bigr\}.\\
\mathbb V_0^m
&:=
\mathbb V^m\cap\bV_0(\Gamma^m),
\qquad
\mathbb V_1^m
:=
\mathbb V^m\cap\bV_{\partial_1}(\Gamma^m).
\end{align*}
\end{subequations}
Let $I_h^m:C(\Gamma^m)\to\mathbb V^m$ denote the standard interpolation
operator.  We denote by $\ipd{\cdot,\cdot}_{\Gamma^m}$ the $L^2$ inner
product over $\Gamma^m$, and by $\ipd{\cdot,\cdot}_{\Gamma^m}^h$ the corresponding
mass-lumped inner product, defined as in \eqref{eq:mass-lumping} with
$\Gamma^h(t)$ replaced by $\Gamma^m$.  We write
\begin{equation}
 \norm{\eta}_{L^2(\Gm)}^2:=\ipd{\eta,\eta}_{\Gm},
 \qquad
 \norm{\eta}_{L_h^2(\Gm)}^2:=\ipd{\eta,\eta}_{\Gm}^h .
\end{equation}

\subsection{Fully discrete scheme for Willmore flow}

For a candidate parametrization
$\vec u^{m+1}\in[\mathbb V^0]^d$ at the next time level, we define the fully
discrete velocity on $\Gamma^m$ by
\begin{equation}
  \vec w^{m+1}
  :=
  \frac{\vec u^{m+1}\circ(\vec u^m)^{-1} -\vec\id|_{\Gm}}{\ttau}\qquad\mbox{on}\qquad\Gm.
\end{equation}   

Let $\bkap^m$ and $\vec k^m$ approximate
$\bkap(\cdot,t_m)$ and $\vec k(\cdot,t_m)$, respectively.  For
$\bkap^m\in\mathbb V^m$ and $\vec\eta^h\in[\mathbb V^m]^d$, define
$\mathcal C_h^m(\bkap^m;\vec\eta^h)\in\mathbb V^m$ by
\begin{equation}\label{eq:fd-bkap-projection}
\ipd{\mathcal C_h^m(\bkap^m;\vec\eta^h),\chi^h}_{\Gm}^h
=
\ipd{\vec\eta^h\cdot\nabs\bkap^m,\chi^h}_{\Gm}^h
\qquad\forall\,\chi^h\in\mathbb V^m.
\end{equation}
We introduce
\begin{align}
  \B_h^m(\vec k,\bkap^m;\vec\eta)
  :={}&
  \ipd{\nabs\vec k,\nabs\vec\eta}_{\Gm}
  +
  \mathcal R_h^m(\vec\eta),
\label{eq:Bmd}
\end{align}
where the lagged lower-order terms are collected in
\begin{align}
  \mathcal R_h^m(\vec\eta)
  :={}&
  \ipd{\divs\vec k^m,\divs\vec\eta}_{\Gm}
  -
  \ipd{(\nabs\vec k^m)^T,\D(\vec\eta)(\nabs\vec\id)^T}_{\Gm}^h
  \nonumber\\
  &+
  \ipd{
  (\vec k^m+\bkap^m\vec\nu^m)\cdot\vec k^m-\frac12|\vec k^m|^2,
  \nabs\vec\id:\nabs\vec\eta
  }_{\Gm}^h
  \nonumber\\
  &-
  \ipd{\bkap^m\vec k^m,(\nabs\vec\eta)^T\vec\nu^m}^h_{\Gm}
  +
  \ipd{
  \mathcal C_h^m(\bkap^m;\vec\eta)\vec\nu^m,
  \vec k^m
  }_{\Gm}^h .
\end{align}
The sign convention in \eqref{eq:Bmd} is chosen so that
$-\B_h^m(\vec k^{m+1},\bkap^m;\cdot)$ is the linearly implicit approximation
of the semidiscrete bending variation
$\B_h(\vec k^h,\bkap^h;\cdot)$ in \eqref{eq:sd-bending-variation}.

We now present the fully discrete parametric approximation.  Given the
initial polyhedral surface $\Gamma^0$, let
$\bkap^0=I_h^0\bkap_0\in\mathbb V^0$.  If an analytic expression for the
initial shifted curvature is available, we set
$\vec k^0=I_h^0\vec k_0$.  Otherwise, we compute
$\vec k^0\in[\mathbb V_1^0]^d$ from
\begin{equation}\label{eq:fd-initial-curvature}
0=
\ipd{\vec k^0+\bkap^0\vec\nu^0,\vec\eta^h}_{\Gamma^0}^h
+
\ipd{\nabs\vec\id,\nabs\vec\eta^h}_{\Gamma^0}
-\ipd{\vec\mu_\partial^h,\vec\eta^h}_{\partial_2\Gamma^0}
\qquad\forall\,\vec\eta^h\in[\mathbb V_1^0]^d.
\end{equation}

For $m=0,\ldots,M-1$, given $(\Gamma^m,\vec k^m,\bkap^m)$, we find
\[
  (\vec w^{m+1},\vec k^{m+1},\lambda^{m+1},\vec p^{m+1})
  \in
  [\mathbb V_0^m]^d\times[\mathbb V_1^m]^d
  \times\mathbb V_0^m\times[\mathbb V_0^m]^d
\]
with
$\vec u^{m+1}=(\vec\id|_{\Gm}+\ttau\,\vec w^{m+1})\circ\vec u^m$ such that
\begin{subequations}\label{eqn:fd-willmore}
\renewcommand{\theHequation}{\theparentequation.\alph{equation}}
\begin{align}
0={}&
  \ipd{
  \vec w^{m+1}\cdot\vec\nu^m,
  \vec\chi^h\cdot\vec\nu^m
  }_{\Gm}^h
  -
  \B_h^m(\vec k^{m+1},\bkap^m;\vec\chi^h)
  \nonumber\\
  &\quad+
  \ipd{P_{\Gamma^m}\vec\chi^h,\vec p^{m+1}}_{\Gm}^h
  +
  \alpha\ipd{\nabs\vec\chi^h,\nabs\vec p^{m+1}}_{\Gm},
\label{eq:fdw-u}\\[0.4em]
0={}&
  \ipd{\vec k^{m+1}+\bkap^m\vec\nu^m,\vec\eta^h}_{\Gm}^h
  +
  \ipd{
  \nabs[\vec\id|_{\Gm}+\ttau\,\vec w^{m+1}],
  \nabs\vec\eta^h
  }_{\Gm}  -\ipd{\vec\mu_\partial^h,\vec\eta^h}_{\partial_2\Gamma^m},
\label{eq:fdw-curvature}\\[0.4em]
0={}&
  \ipd{\rho^h\vec\nu^m,\vec p^{m+1}}_{\Gm}^h,
\label{eq:fdw-pnormal}\\[0.4em]
0={}&
  \ipd{P_{\Gamma^m}\vec w^{m+1},\vec\xi^h}_{\Gm}^h
  +
  \ipd{\lambda^{m+1}\vec\nu^m,\vec\xi^h}_{\Gm}^h
  +
  \alpha\ipd{\nabs\vec w^{m+1},\nabs\vec\xi^h}_{\Gm},
\label{eq:fdw-mdr}
\end{align}
\end{subequations}
for all $(\vec\chi^h,\vec\eta^h,\rho^h,\vec\xi^h)
\in[\mathbb V_0^m]^d\times[\mathbb V_1^m]^d
\times\mathbb V_0^m\times[\mathbb V_0^m]^d$.  We then set
$\Gamma^{m+1}=\vec u^{m+1}(\Gamma^0)$.  Let
\[
F^m:=\vec\id|_{\Gamma^m}+\ttau\vec w^{m+1}:
\Gamma^m\to\Gamma^{m+1}.
\]
On $\Gamma^m$, we compute $\bkap^{m+1}\in\mathbb V^m$ from
\begin{equation}\label{eq:fd-transport-bkap}
0=
\ipd{
\frac{\bkap^{m+1}-\bkap^m}{\ttau},\chi^h
}_{\Gamma^m}^h
-
\ipd{
\vec w^{m+1}\cdot\nabs\bkap^m,\chi^h
}_{\Gamma^m}^h
\qquad\forall\,\chi^h\in\mathbb V^m.
\end{equation}
Equivalently, $\bkap^{m+1}=
\bkap^m+\ttau\mathcal C_h^m(\bkap^m;\vec w^{m+1})$.

Both $\vec k^{m+1}$ and $\bkap^{m+1}$ are therefore defined on $\Gamma^m$. For use at the next time level, they are push-forward under $F^m$ according to
\[
\vec k_{\Gamma^{m+1}}^{m+1}\circ F^m
=
\vec k_{\Gamma^m}^{m+1},
\qquad
\bkap_{\Gamma^{m+1}}^{m+1}\circ F^m
=
\bkap_{\Gamma^m}^{m+1},
\]
where the subscripts only indicate the supporting surface.  With a slight abuse of notation, we still use the same symbols
$\vec k^{m+1}$ and $\bkap^{m+1}$ for their representations on $\Gamma^{m+1}$.

For the solvability analysis, we follow~\cite{BGN08parametric} and introduce
the vertex normal vector $\vec\nu_{h,p}^{m}\in[\mathbb V^m]^d$.  For each
vertex $\vec q_k^m\in\mathcal Q^m$, let
$\mathcal I_k^m:=\{j:\vec q_k^m\in\overline{\sigma_j^m}\}$ denote the
index set of the simplices sharing $\vec q_k^m$.
We assume that all simplices in $\mathcal T^m$ are nondegenerate and define
\begin{equation}\label{eq:vertex-normal-explicit}
\vec\nu_{h,p}^m(\vec q_k^m)
:=
\frac{\displaystyle\sum_{j\in\mathcal I_k^m}\mathcal{H}^{d-1}(\sigma_j^m)\,\vec\nu_j^m}
{\displaystyle\sum_{j\in\mathcal I_k^m}\mathcal{H}^{d-1}(\sigma_j^m)}
\qquad k=1,\ldots,K.
\end{equation}
Equivalently,
$\vec\nu_{h,p}^{m}$ can be regarded as the mass-lumped $L^2$--projection of
$\vec\nu^m$ onto $[\mathbb V^m]^d$, i.e.,
\begin{equation} \label{eq:nuhomegah}
\ipd{\vec\nu_{h,p}^m, \vec\chi^h}_{\Gm}^h
= \ipd{\vec\nu^m,~\vec\chi^h}_{\Gm}^h
= \ipd{\vec\nu^m,~\vec\chi^h}_{\Gm}
\quad\forall\vec\chi^h\in[\mathbb V^m]^d,
\end{equation}
where the last equality follows from the exactness of the mass-lumped rule for
elementwise affine integrands.  It follows that
\begin{equation}\label{eq:vpiden}
\ipd{\chi\,\vec\nu_{h,p}^m,~\vec\eta^h}_{\Gm}^h
= \ipd{\chi\,\vec\nu^m,~\vec\eta^h}_{\Gm}^h
\qquad\forall\chi\in \mathbb V^m,\quad\vec\eta^h\in[\mathbb V^m]^d.
\end{equation}

Let
\begin{equation}
\mathcal Q_{\rm f}^m
:=
\{\vec q\in\mathcal Q^m:\vec q\notin\partial\Gamma^m\},
\end{equation}
denote the set of free vertices of the mesh $\mathcal T^m$.  We use the following
mesh assumptions in the solvability results:
\begin{enumerate}[label=$(\mathbf{A\arabic*})$, ref=$\mathbf{A\arabic*}$]
\item \label{assumpI} $\mathcal H^{d-1}(\sigma) > 0$ for all $\sigma \in \mathcal T^m$;
\item \label{assumpII} $\dim\operatorname{span}\left(
\bigl\{\vec\nu_{h,p}^{m}(\vec q)\bigr\}_{\vec q\in \mathcal Q_{\rm f}^m}\right)=d$;
\item \label{assumpIII} $\vec\nu_{h,p}^{m}(\vec q)\neq \vec 0$ for all
$\vec q\in\mathcal Q_{\rm f}^m$.
\end{enumerate}

\begin{lem}\label{lem:a0}
Suppose that assumptions~\ref{assumpI} and \ref{assumpIII} hold.  If
$\vec a^h\in[\mathbb V_0^m]^d$ satisfies
\begin{equation}\label{eq:projcon}
\ipd{P_{\Gamma^m}\vec a^h,\vec a^h}_{\Gm}^h=0,
\end{equation}
and
\begin{equation}\label{eq:norcon}
\ipd{\rho^h\vec\nu^m,\vec a^h}_{\Gm}^h=0
\qquad\forall\,\rho^h\in\mathbb V_0^m,
\end{equation}
then $\vec a^h=\vec0\in[\mathbb V_0^m]^d$.
\end{lem}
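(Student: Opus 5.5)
Both hypotheses reduce to vertexwise statements once the mass-lumped inner product is expanded, so the plan is to show that $\vec a^h$ has no tangential part and no normal part at each free vertex.

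\textbf{Tangential part.} Expanding \eqref{eq:projcon} with \eqref{eq:mass-lumping} gives
\[
0=\ipd{P_{\Gamma^m}\vec a^h,\vec a^h}_{\Gm}^h
=\frac1d\sum_{j=1}^J\mathcal H^{d-1}(\sigma_j^m)\sum_{k=0}^{d-1}\bigl|P_{\Gamma^m}|_{\sigma_j^m}\,\vec a^h(\vec q^m_{j_k})\bigr|^2 ,
\]
since $P_{\Gamma^m}$ is symmetric and idempotent on each simplex. Every summand is nonnegative, and \ref{assumpI} makes every weight positive, so each summand vanishes. Hence at every vertex $\vec q_k^m$ and for every $j\in\mathcal I_k^m$ we have $\vec a^h(\vec q_k^m)=\bigl(\vec a^h(\vec q_k^m)\cdot\vec\nu_j^m\bigr)\vec\nu_j^m$, i.e. $\vec a^h(\vec q_k^m)$ is parallel to every element normal adjacent to $\vec q_k^m$.

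\textbf{Normal part.} Take $\rho^h=\phi_k\in\mathbb V_0^m$, the nodal basis function of a free vertex $\vec q_k^m\in\mathcal Q_{\rm f}^m$. In \eqref{eq:norcon} only the vertex $\vec q_k^m$ contributes, giving
\[
0=\frac1d\sum_{j\in\mathcal I_k^m}\mathcal H^{d-1}(\sigma_j^m)\,\vec\nu_j^m\cdot\vec a^h(\vec q_k^m)
=\frac1d\Bigl(\sum_{j\in\mathcal I_k^m}\mathcal H^{d-1}(\sigma_j^m)\Bigr)\,\vec\nu_{h,p}^m(\vec q_k^m)\cdot\vec a^h(\vec q_k^m),
\]
by \eqref{eq:vertex-normal-explicit}. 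Since the total area factor is positive by \ref{assumpI}, this yields $\vec\nu_{h,p}^m(\vec q_k^m)\cdot\vec a^h(\vec q_k^m)=0$.

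\textbf{Combining the two.} Fix a free vertex and write $\vec a:=\vec a^h(\vec q_k^m)$. If $\vec a\neq\vec0$, the tangential step gives $\vec\nu_j^m=\pm\vec a/|\vec a|$ for all $j\in\mathcal I_k^m$. Set $s_j:=\vec\nu_j^m\cdot\vec a/|\vec a|\in\{\pm1\}$. Then $\vec\nu_{h,p}^m(\vec q_k^m)$ is a positive multiple of $\bigl(\sum_j\mathcal H^{d-1}(\sigma_j^m)\,s_j\bigr)\,\vec a/|\vec a|$. The normal step says this is orthogonal to $\vec a$, so it is the zero vector, which contradicts \ref{assumpIII}. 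Therefore $\vec a=\vec0$ at every free vertex. At boundary vertices $\vec a^h=\vec0$ already, because $\vec a^h\in[\mathbb V_0^m]^d$. Hence $\vec a^h=\vec0$.

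The only delicate point is the combination step: the tangential information forces all adjacent element normals onto a single line, and \ref{assumpIII} is exactly what excludes cancellation in their area-weighted sum. Assumption \ref{assumpII} is not needed for this lemma.
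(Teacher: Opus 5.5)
Your proof is correct and follows essentially the same route as the paper. Both arguments expand the mass-lumped form of \eqref{eq:projcon} to show that each free nodal value is parallel to all adjacent element normals, then test \eqref{eq:norcon} with the nodal hat function to force $\vec\nu_{h,p}^m(\vec q_k^m)=\vec0$, which contradicts \ref{assumpIII}.
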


\begin{proof}
Let $P_j^m:=\Id-\vec\nu_j^m\otimes\vec\nu_j^m$ on $\sigma_j^m$.  Since
$P_j^m$ is symmetric and idempotent, expanding \eqref{eq:projcon} gives
\[
0
=
\frac{1}{d}\sum_{j=1}^J\mathcal{H}^{d-1}(\sigma_j^m)
\sum_{\ell=0}^{d-1}
\left|P_j^m\vec a^h(\vec q_{j_\ell}^m)\right|^2.
\]
By assumption~\ref{assumpI}, all weights are positive, and hence
\begin{equation}\label{eq:pjakm}
0=P_j^m\vec a^h(\vec q_k^m)=P_j^m\vec a_k^m
\qquad\forall\,j\in\mathcal I_k^m,\quad k=1,\ldots,K,
\end{equation}
where we set $\vec a_k^m:=\vec a^h(\vec q_k^m)$. 
Suppose, on the contrary, that $\vec a_k^m\neq\vec0$ at a fixed free vertex
$\vec q_k^m\in\mathcal Q_{\rm f}^m$.  Equation~\eqref{eq:pjakm} then implies
\[
\vec a_k^m=(\vec a_k^m\cdot\vec\nu_j^m)\vec\nu_j^m
\qquad\forall\,j\in\mathcal I_k^m.
\]
Since $|\vec\nu_j^m|=1$, this implies
\[
\vec\nu_j^m
=
\frac{\vec\nu_j^m\cdot\vec a_k^m}{|\vec a_k^m|^2}\,\vec a_k^m
\qquad\forall\,j\in\mathcal I_k^m.
\]
Using the definition \eqref{eq:vertex-normal-explicit}, we thus obtain
\begin{equation}\label{eq:vhpnew}
\vec\nu_{h,p}^m(\vec q_k^m)
=
\frac{
\displaystyle\sum_{j\in\mathcal I_k^m}
\mathcal H^{d-1}(\sigma_j^m)\,(\vec\nu_j^m\cdot\vec a_k^m)}
{|\vec a_k^m|^2\displaystyle\sum_{j\in\mathcal I_k^m}\mathcal H^{d-1}(\sigma_j^m)}
\,\vec a_k^m.
\end{equation}

Choosing $\rho_k^h\in\mathbb V_0^m$ with
$\rho_k^h(\vec q_i^m)=\delta_{ki}$ in \eqref{eq:norcon} gives
\[
0
=
\ipd{\rho_k^h\vec\nu^m,\vec a^h}_{\Gm}^h
=
\frac1d\sum_{j\in\mathcal I_k^m}
\mathcal{H}^{d-1}(\sigma_j^m)\,\vec\nu_j^m\cdot\vec a_k^m,
\]
which, on recalling \eqref{eq:vhpnew}, implies that
$\vec\nu_{h,p}^m(\vec q_k^m)=\vec0$.  This contradicts
assumption~\ref{assumpIII}.  Hence $\vec a_k^m=\vec0$.  Since the vertex
$\vec q_k^m$ was arbitrary and the boundary values already vanish,
$\vec a^h=\vec0$ in $[\mathbb V_0^m]^d$.
\end{proof}

The following theorem establishes unique solvability of the introduced scheme.
\begin{thm}[existence and uniqueness]\label{thm:fd-willmore-solvability}
Assume that $\Gamma^m$ is connected, assumptions~\ref{assumpI}--\ref{assumpIII}
hold, and $\alpha\ge0$.
Then the fully discrete scheme \eqref{eqn:fd-willmore} has a unique solution.
\end{thm}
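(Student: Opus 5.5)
Since \eqref{eqn:fd-willmore} is a square linear system for the unknowns $(\vec w^{m+1},\vec k^{m+1},\lambda^{m+1},\vec p^{m+1})$ in a finite-dimensional space (unknowns and test functions range over the same spaces), existence is equivalent to uniqueness. It therefore suffices to show that the homogeneous system has only the trivial solution. This is the system obtained by dropping all data: the terms $\bkap^m\vec\nu^m$, $\nabs\vec\id$, $\vec\mu^h_\partial$ in \eqref{eq:fdw-curvature}, and the lagged terms $\mathcal R_h^m$ in $\B_h^m$, which are linear in $\vec\chi^h$ but independent of the unknowns and so act as a right-hand side. Denote a homogeneous solution by $(\vec w,\vec k,\lambda,\vec p)\in[\mathbb V_0^m]^d\times[\mathbb V_1^m]^d\times\mathbb V_0^m\times[\mathbb V_0^m]^d$. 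It satisfies
\[
(\vec w\cdot\vec\nu^m,\vec\chi\cdot\vec\nu^m)^h_{\Gm}-(\nabs\vec k,\nabs\vec\chi)_{\Gm}+(P_{\Gamma^m}\vec\chi,\vec p)^h_{\Gm}+\alpha(\nabs\vec\chi,\nabs\vec p)_{\Gm}=0,
\]
$(\vec k,\vec\eta)^h_{\Gm}+\ttau(\nabs\vec w,\nabs\vec\eta)_{\Gm}=0$, $(\rho\vec\nu^m,\vec p)^h_{\Gm}=0$, and $(P_{\Gamma^m}\vec w,\vec\xi)^h_{\Gm}+(\lambda\vec\nu^m,\vec\xi)^h_{\Gm}+\alpha(\nabs\vec w,\nabs\vec\xi)_{\Gm}=0$.

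Energy testing proceeds as follows. Choose $\vec\chi=\vec w$, $\vec\eta=\vec k$ (admissible, since $\vec w\in[\mathbb V_0^m]^d\subset[\mathbb V_1^m]^d$), $\vec\xi=\vec p$, and $\rho=\lambda$. The last two equations give $(P_{\Gamma^m}\vec w,\vec p)^h+\alpha(\nabs\vec w,\nabs\vec p)=-(\lambda\vec\nu^m,\vec p)^h=0$, so the $\vec p$ terms drop from the first equation. The first equation then yields $\|\vec w\cdot\vec\nu^m\|^2_{L^2_h}=(\nabs\vec k,\nabs\vec w)$. Testing the curvature equation with $\vec\eta=\vec k$ gives $\|\vec k\|^2_{L_h^2}=-\ttau(\nabs\vec w,\nabs\vec k)$. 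Adding $\ttau$ times the first identity to the second gives $\ttau\|\vec w\cdot\vec\nu^m\|^2_{L^2_h}+\|\vec k\|^2_{L^2_h}=0$. Hence $\vec k=\vec0$ (mass lumping is a norm under \ref{assumpI}) and $\vec w\cdot\vec\nu^m=0$ at each vertex of each element. Then the curvature equation gives $(\nabs\vec w,\nabs\vec\eta)=0$ for all $\vec\eta\in[\mathbb V_1^m]^d$. Taking $\vec\eta=\vec w$ shows $\nabs\vec w=0$, so $\vec w$ is constant on the connected $\Gamma^m$. In the open case $\vec w=\vec0$ follows from the boundary values. In the closed case a constant $\vec w$ satisfies $\vec w\cdot\vec\nu_j^m=0$ for all $j$, hence $\vec w\cdot\vec\nu^m_{h,p}(\vec q)=0$ at all vertices, and \ref{assumpII} forces $\vec w=\vec0$. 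From the $\vec w$-equation, $(\lambda\vec\nu^m,\vec\xi)^h=0$ for all $\vec\xi$; by \eqref{eq:vpiden} this reads $\lambda(\vec q_k)\vec\nu_{h,p}^m(\vec q_k)=\vec0$ at free vertices, and \ref{assumpIII} gives $\lambda=0$.

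The step I expect to be the main obstacle is showing $\vec p=\vec0$. With $\vec w=\vec k=\vec0$, the first equation becomes $(P_{\Gamma^m}\vec\chi,\vec p)^h+\alpha(\nabs\vec\chi,\nabs\vec p)=0$ for all $\vec\chi\in[\mathbb V_0^m]^d$. Taking $\vec\chi=\vec p$ gives $(P_{\Gamma^m}\vec p,\vec p)^h+\alpha\|\nabs\vec p\|^2=0$, and both terms are nonnegative because mass-lumped $P_{\Gamma^m}$ is positive semidefinite elementwise. Hence $(P_{\Gamma^m}\vec p,\vec p)^h_{\Gm}=0$, and together with $(\rho\vec\nu^m,\vec p)^h=0$ for all $\rho\in\mathbb V^m_0$, Lemma~\ref{lem:a0} yields $\vec p=\vec0$. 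This holds also for $\alpha=0$, which is exactly why the lemma is needed. Care is required in the closed case, where $\mathbb V_0^m=\mathbb V^m$ but the vertex-normal argument is unchanged. The homogeneous system thus has only the trivial solution, which proves existence and uniqueness.
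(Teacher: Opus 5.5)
Your proposal is correct and follows essentially the same route as the paper. You reduce to the homogeneous system and test with $\vec\chi=\vec w$, $\vec\eta=\vec k$, $\vec\xi=\vec p$, $\rho=\lambda$ to cancel the relaxed-MDR cross terms and obtain $\vec k=\vec 0$, $\vec w\cdot\vec\nu^m=0$. You then use $\vec\eta=\vec w$, connectivity and \ref{assumpII} for $\vec w=\vec 0$, \eqref{eq:vpiden} with \ref{assumpI} and \ref{assumpIII} for $\lambda=0$, and finally $\vec\chi=\vec p$ with Lemma~\ref{lem:a0} for $\vec p=\vec 0$. The only slip is cosmetic: the inclusion $[\mathbb V_0^m]^d\subset[\mathbb V_1^m]^d$ justifies the later test $\vec\eta=\vec w$, not $\vec\eta=\vec k$, which is admissible simply because $\vec k\in[\mathbb V_1^m]^d$.
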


\begin{proof}
The system is linear, square, and finite dimensional, so it is enough to show
that its homogeneous system has only the zero solution.  Dropping all known
terms, we seek
\[
(\vec w,\vec k,\lambda,\vec p)
\in
[\mathbb V_0^m]^d\times[\mathbb V_1^m]^d
\times\mathbb V_0^m\times[\mathbb V_0^m]^d
\]
such that
\begin{subequations}\label{eqn:fdw-homogeneous}
\renewcommand{\theHequation}{\theparentequation.\alph{equation}}
\begin{align}
0={}&
\ipd{\vec w\cdot\vec\nu^m,\vec\chi^h\cdot\vec\nu^m}_{\Gm}^h
-
\ipd{\nabs\vec k,\nabs\vec\chi^h}_{\Gm}
+
\ipd{P_{\Gamma^m}\vec\chi^h,\vec p}_{\Gm}^h
+
\alpha\ipd{\nabs\vec\chi^h,\nabs\vec p}_{\Gm},
\label{eq:fdw-hom-u}\\
0={}&
\ipd{\vec k,\vec\eta^h}_{\Gm}^h
+
\ttau\ipd{\nabs\vec w,\nabs\vec\eta^h}_{\Gm},
\label{eq:fdw-hom-curvature}\\
0={}&
\ipd{\rho^h\vec\nu^m,\vec p}_{\Gm}^h,
\label{eq:fdw-hom-pnormal}\\
0={}&
\ipd{P_{\Gamma^m}\vec w,\vec\xi^h}_{\Gm}^h
+
\ipd{\lambda\vec\nu^m,\vec\xi^h}_{\Gm}^h
+
\alpha\ipd{\nabs\vec w,\nabs\vec\xi^h}_{\Gm},
\label{eq:fdw-hom-mdr}
\end{align}
\end{subequations}
for all $(\vec\chi^h,\vec\eta^h,\rho^h,\vec\xi^h)$ in the corresponding
test spaces.

Taking $\vec\xi^h=\vec p$ in \eqref{eq:fdw-hom-mdr} and
$\rho^h=\lambda$ in \eqref{eq:fdw-hom-pnormal} gives
\begin{equation}\label{eq:fdw-hom-cross}
\ipd{P_{\Gamma^m}\vec w,\vec p}_{\Gm}^h
+
\alpha\ipd{\nabs\vec w,\nabs\vec p}_{\Gm}
=0.
\end{equation}
Next, take $\vec\chi^h=\vec w$ in \eqref{eq:fdw-hom-u} and
$\vec\eta^h=\vec k$ in \eqref{eq:fdw-hom-curvature}.  Using
\eqref{eq:fdw-hom-cross}, we obtain
\[
\norm{\vec w\cdot\vec\nu^m}_{L_h^2(\Gm)}^2
+
\frac1{\ttau}\norm{\vec k}_{L_h^2(\Gm)}^2
=0.
\]
Hence $\vec k=\vec0$ and $\vec w\cdot\vec\nu^m=0$ in the mass-lumped norm.
Taking $\vec\eta^h=\vec w$ in \eqref{eq:fdw-hom-curvature} then gives
$\norm{\nabs\vec w}_{L^2(\Gm)}=0$, so $\vec w$ is constant on the connected
surface.  If $\partial\Gamma^m\neq\emptyset$, the boundary condition
$\vec w\in[\mathbb V_0^m]^d$ immediately yields $\vec w=\vec0$.  If
$\partial\Gamma^m=\emptyset$, the identity
$\vec w\cdot\vec\nu^m=0$ in the mass-lumped norm implies
\[
\vec w\cdot\vec\nu_{h,p}^m(\vec q)=0
\qquad\forall\,\vec q\in\mathcal Q_{\rm f}^m.
\]
Assumption~\ref{assumpII} then also yields $\vec w=\vec0$.

With $\vec w=\vec0$, equation \eqref{eq:fdw-hom-mdr} reduces to
\[
\ipd{\lambda\vec\nu^m,\vec\xi^h}_{\Gm}^h=0
\qquad\forall\,\vec\xi^h\in[\mathbb V_0^m]^d.
\]
Choose $\vec\xi^h$ such that
\begin{equation}
\vec\xi^h(\vec q)=\left\{\begin{array}{ll}
\lambda(\vec q)\vec\nu_{h,p}^m(\vec q)\qquad &\mbox{if}\quad\vec q\in\mathcal Q_{\rm f}^m;\\[0.4em]
\vec 0\qquad &\mbox{otherwise}
\end{array}\right.
\end{equation} 
Using
\eqref{eq:vpiden}, the resulting identity is
\[
\sum_{\vec q_k^m\in\mathcal Q_{\rm f}^m}
m_k^m\lambda(\vec q_k^m)^2
|\vec\nu_{h,p}^m(\vec q_k^m)|^2=0,
\qquad
m_k^m:=\frac1d\sum_{j\in\mathcal I_k^m}\mathcal H^{d-1}(\sigma_j^m).
\]
Assumptions~\ref{assumpI} and \ref{assumpIII} therefore imply
$\lambda=0$.

Finally, taking $\vec\chi^h=\vec p$ in \eqref{eq:fdw-hom-u} yields
\[
\ipd{P_{\Gamma^m}\vec p,\vec p}_{\Gm}^h
+
\alpha\norm{\nabs\vec p}_{L^2(\Gm)}^2
=0.
\]
Since $P_{\Gamma^m}$ is symmetric and idempotent and $\alpha\ge0$, both
terms are nonnegative; in particular,
$\ipd{P_{\Gamma^m}\vec p,\vec p}_{\Gm}^h=0$.
Together with \eqref{eq:fdw-hom-pnormal}, Lemma~\ref{lem:a0} gives
$\vec p=\vec0$.  Hence the homogeneous system has only the trivial solution,
which proves existence and uniqueness.
\end{proof}

\subsection{Fully discrete scheme for Helfrich flow}

We define
\begin{equation}\label{eq:fd-area-coefficient}
\theta_A^m
:=
I_h^m(\vec k^m\cdot\vec\nu_{h,p}^m+\bkap^m)
\in\mathbb V^m.
\end{equation}
The direction $\theta_A^m\vec\nu^m$ is a nodal normal-projection
approximation of the semidiscrete area-gradient direction
$\vec k^m+\bkap^m\vec\nu^m$.  These two directions are not algebraically
identical on a polyhedral surface.  The projected form is used in the fully
discrete scheme primarily to obtain a unique-solvability condition for the two global constraint multipliers. 

The fully discrete scheme for the Helfrich flow is as follows.
For $m=0,\ldots,M-1$, given $(\Gamma^m,\vec k^m,\bkap^m)$, we find
\[
  (\vec w^{m+1},\vec k^{m+1},\lambda^{m+1},\vec p^{m+1},
  \lambda_A^{m+1},\lambda_V^{m+1})
  \in
  [\mathbb V_0^m]^d\times[\mathbb V_1^m]^d
  \times\mathbb V_0^m\times[\mathbb V_0^m]^d
  \times\bR\times\bR
\]
with
$\vec u^{m+1}=(\vec\id|_{\Gm}+\ttau\,\vec w^{m+1})\circ\vec u^m$
such that
\begin{subequations}\label{eqn:fd-helfrich}
\renewcommand{\theHequation}{\theparentequation.\alph{equation}}
\begin{align}
0={}&
  \ipd{
  \vec w^{m+1}\cdot\vec\nu^m,
  \vec\chi^h\cdot\vec\nu^m
  }_{\Gm}^h
  -
  \B_h^m(\vec k^{m+1},\bkap^m;\vec\chi^h)
  \nonumber\\
  &\quad+
  \ipd{P_{\Gamma^m}\vec\chi^h,\vec p^{m+1}}_{\Gm}^h
  +
  \alpha\ipd{\nabs\vec\chi^h,\nabs\vec p^{m+1}}_{\Gm}
  \nonumber\\
  &\quad+
  \lambda_A^{m+1}
  \ipd{\theta_A^m\vec\nu^m,\vec\chi^h}_{\Gm}^h
  +
  \lambda_V^{m+1}
  \ipd{\vec\nu^m,\vec\chi^h}_{\Gm}^h,
\label{eq:fdh-u}\\
0={}&
  \ipd{\vec k^{m+1}+\bkap^m\vec\nu^m,\vec\eta^h}_{\Gm}^h
  +
  \ipd{
  \nabs(\vec\id|_{\Gm}+\ttau\,\vec w^{m+1}),
  \nabs\vec\eta^h
  }_{\Gm} -\ipd{\vec\mu_\partial^h,~\vec\eta^h}_{\partial_2\Gamma^m},
\label{eq:fdh-curvature}\\
0={}&
  \ipd{\rho^h\vec\nu^m,\vec p^{m+1}}_{\Gm}^h,
\label{eq:fdh-pnormal}\\
0={}&
  \ipd{P_{\Gamma^m}\vec w^{m+1},\vec\xi^h}_{\Gm}^h
  +
  \ipd{\lambda^{m+1}\vec\nu^m,\vec\xi^h}_{\Gm}^h
  +
  \alpha\ipd{\nabs\vec w^{m+1},\nabs\vec\xi^h}_{\Gm},
\label{eq:fdh-mdr}\\
0={}&
  \ipd{\vec w^{m+1},\vec\nu^m}_{\Gm}^h,
\label{eq:fdh-volume}\\
0={}&
  \ipd{\theta_A^m\vec\nu^m,\vec w^{m+1}}_{\Gm}^h.
\label{eq:fdh-area}
\end{align}
\end{subequations}
for all $(\vec\chi^h,\vec\eta^h,\rho^h,\vec\xi^h)
\in[\mathbb V_0^m]^d\times[\mathbb V_1^m]^d
\times\mathbb V_0^m\times[\mathbb V_0^m]^d$.
We then set $\Gamma^{m+1}=\vec u^{m+1}(\Gamma^0)$, determine
$\bkap^{m+1}$ by \eqref{eq:fd-transport-bkap}, and identify
$\vec k^{m+1}$ and $\bkap^{m+1}$ on $\Gamma^{m+1}$ by the push-forward
convention under $F^m$ described above.

\begin{thm}[existence and uniqueness]\label{thm:fd-helfrich-solvability}
Assume that $\Gamma^m$ is connected, assumptions~\ref{assumpI}--\ref{assumpIII}
hold, and $\alpha\ge0$.
Assume, in addition, that
\begin{enumerate}[label=$(\mathbf{B\arabic*})$, ref=$\mathbf{B\arabic*}$]
\item \label{assumpB} there exist
$\vec q_i^m,\vec q_j^m\in\mathcal Q_{\rm f}^m$ such that
\[
\theta_A^m(\vec q_i^m)\neq\theta_A^m(\vec q_j^m).
\]
\end{enumerate}
Then the fully discrete scheme \eqref{eqn:fd-helfrich} has a unique solution.
\end{thm}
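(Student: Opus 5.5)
As in Theorem~\ref{thm:fd-willmore-solvability}, the system \eqref{eqn:fd-helfrich} is linear, square and finite dimensional. It therefore suffices to show that the homogeneous system, with all data dropped, has only the trivial solution $(\vec w,\vec k,\lambda,\vec p,\lambda_A,\lambda_V)$. The homogeneous equations are \eqref{eqn:fdw-homogeneous} with two changes: \eqref{eq:fdw-hom-u} is augmented by the terms $\lambda_A\ipd{\theta_A^m\vec\nu^m,\vec\chi^h}_{\Gm}^h+\lambda_V\ipd{\vec\nu^m,\vec\chi^h}_{\Gm}^h$, and the two scalar constraints $\ipd{\vec w,\vec\nu^m}_{\Gm}^h=0$ and $\ipd{\theta_A^m\vec\nu^m,\vec w}_{\Gm}^h=0$ are added.

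\emph{Energy step.} I would first repeat the energy argument. Take $\vec\xi^h=\vec p$ in the MDR equation and $\rho^h=\lambda$ in the normality equation to obtain \eqref{eq:fdw-hom-cross}. Then take $\vec\chi^h=\vec w$ in the momentum equation and $\vec\eta^h=\vec k$ in the curvature equation. The multiplier terms vanish by the two homogeneous constraints, so
\[
\norm{\vec w\cdot\vec\nu^m}_{L_h^2(\Gm)}^2+\tfrac1{\ttau}\norm{\vec k}_{L_h^2(\Gm)}^2=0 .
\]
Exactly as before, this gives $\vec k=\vec0$ and $\vec w=\vec0$, using connectedness together with either the boundary condition or \ref{assumpII}. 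Then $\lambda=0$ follows as in Theorem~\ref{thm:fd-willmore-solvability}, via the choice $\vec\xi^h=\lambda\vec\nu_{h,p}^m$ at free vertices and \ref{assumpI}, \ref{assumpIII}.

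\emph{Remaining unknowns.} What is left is $(\vec p,\lambda_A,\lambda_V)$ satisfying
\[
0=\ipd{P_{\Gamma^m}\vec\chi^h,\vec p}_{\Gm}^h+\alpha\ipd{\nabs\vec\chi^h,\nabs\vec p}_{\Gm}+\ipd{(\lambda_A\theta_A^m+\lambda_V)\vec\nu^m,\vec\chi^h}_{\Gm}^h
\]
for all $\vec\chi^h\in[\mathbb V_0^m]^d$, together with $\ipd{\rho^h\vec\nu^m,\vec p}_{\Gm}^h=0$ for all $\rho^h\in\mathbb V_0^m$. Choose $\vec\chi^h=\vec p$. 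Since $\theta_A^m\in\mathbb V^m$, the function $\psi:=I_h^m(\lambda_A\theta_A^m+\lambda_V)$ agrees nodally with $\lambda_A\theta_A^m+\lambda_V$. The mass-lumped product $\ipd{\psi\vec\nu^m,\vec p}_{\Gm}^h$ only sees nodal values, and $\vec p$ vanishes on $\partial\Gamma^m$, so $\psi$ may be replaced by a function in $\mathbb V_0^m$ with the same free-vertex values. The normality equation then kills this term. What remains is $\ipd{P_{\Gamma^m}\vec p,\vec p}_{\Gm}^h+\alpha\norm{\nabs\vec p}^2=0$, and Lemma~\ref{lem:a0} yields $\vec p=\vec0$.

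\emph{Multipliers.} Now $\ipd{\psi\vec\nu^m,\vec\chi^h}_{\Gm}^h=0$ for all $\vec\chi^h\in[\mathbb V_0^m]^d$. Choose $\vec\chi^h$ equal to $\psi(\vec q)\vec\nu_{h,p}^m(\vec q)$ at free vertices and zero elsewhere, and use \eqref{eq:vpiden}. This gives $\sum_{\mathcal Q_{\rm f}^m}m_k^m\psi(\vec q_k^m)^2|\vec\nu_{h,p}^m(\vec q_k^m)|^2=0$, so by \ref{assumpI} and \ref{assumpIII}, $\lambda_A\theta_A^m(\vec q)+\lambda_V=0$ at every free vertex. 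Evaluating at the two vertices from \ref{assumpB} and subtracting gives $\lambda_A(\theta_A^m(\vec q_i^m)-\theta_A^m(\vec q_j^m))=0$, hence $\lambda_A=0$ and then $\lambda_V=0$. This requires $\mathcal Q_{\rm f}^m\neq\emptyset$, which \ref{assumpB} guarantees.

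\emph{Main obstacle.} The delicate step is showing $\vec p=\vec 0$ before the multipliers are known, that is, verifying that the $\lambda_A,\lambda_V$ terms drop out when testing with $\vec p$. This relies on their being of the form (nodal scalar)$\,\times\vec\nu^m$ in the lumped product. That structure is exactly why the paper uses the projected direction $\theta_A^m\vec\nu^m$ rather than $\vec k^m+\bkap^m\vec\nu^m$.
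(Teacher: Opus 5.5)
Your proposal is correct and follows essentially the same route as the paper. The shared steps are: the energy test giving $\vec k=\vec0$, $\vec w=\vec0$ and $\lambda=0$; testing with $\vec\chi^h=\vec p$ and cancelling the $\lambda_A,\lambda_V$ terms through the normality equation, using free-vertex modifications in $\mathbb V_0^m$ (the paper's $\rho_A^h,\rho_V^h$); Lemma~\ref{lem:a0} for $\vec p=\vec0$; and the nodal choice $\zeta^h\vec\nu_{h,p}^m$ together with \ref{assumpB} to conclude $\lambda_A=\lambda_V=0$.
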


\begin{proof}
It suffices to consider the corresponding homogeneous system: find
\[
(\vec w,\vec k,\lambda,\vec p,\lambda_A,\lambda_V)
\in[\mathbb V_0^m]^d\times[\mathbb V_1^m]^d
\times\mathbb V_0^m\times[\mathbb V_0^m]^d\times\bR\times\bR
\]
such that
\begin{subequations}
\renewcommand{\theHequation}{\theparentequation.\alph{equation}}
\begin{align}
\label{eq:homof1}
0={}&
\ipd{\vec w\cdot\vec\nu^m,\vec\chi^h\cdot\vec\nu^m}_{\Gm}^h
-
\ipd{\nabs\vec k,\nabs\vec\chi^h}_{\Gm}
+
\ipd{P_{\Gamma^m}\vec\chi^h,\vec p}_{\Gm}^h
+
\alpha\ipd{\nabs\vec\chi^h,\nabs\vec p}_{\Gm}\nonumber\\
&+\lambda_A\ipd{\theta_A^m\vec\nu^m,~\vec\chi^h}_{\Gm}^h + \lambda_V\ipd{\vec\nu^m,~\vec\chi^h}_{\Gm}^h.\\
\label{eq:homofvc}
0={}&\ipd{\vec w,~\vec\nu^m}_{\Gm}^h.\\
0={}&\ipd{\theta_A^m\vec\nu^m,~\vec w}_{\Gm}^h.
\label{eq:homofac}
\end{align}
\end{subequations}    
together with \eqref{eq:fdw-hom-curvature}, \eqref{eq:fdw-hom-pnormal}, and
\eqref{eq:fdw-hom-mdr}.  Choosing $\vec\chi^h=\vec w$ in
\eqref{eq:homof1}, $\vec\eta^h=\vec k$ in
\eqref{eq:fdw-hom-curvature}, $\rho^h=\lambda$ in
\eqref{eq:fdw-hom-pnormal}, and $\vec\xi^h=\vec p$ in
\eqref{eq:fdw-hom-mdr} leads to
 \[
\norm{\vec w\cdot\vec\nu^m}_{L_h^2(\Gm)}^2
+
\frac1{\ttau}\norm{\vec k}_{L_h^2(\Gm)}^2
=0.
\]
Here the terms involving $\lambda_A$ and $\lambda_V$ vanish by
\eqref{eq:homofac} and \eqref{eq:homofvc}, respectively.  As in the proof of
Theorem~\ref{thm:fd-willmore-solvability}, we therefore obtain
\[
\vec k=\vec0,\qquad \vec w=\vec0,\qquad \lambda=0.
\]

Next, taking
$\vec\chi^h=\vec p$ in \eqref{eq:homof1} with
$\vec w=\vec0$ and $\vec k=\vec0$ gives
\begin{equation}
0=\ipd{P_{\Gamma^m}\vec p,\vec p}_{\Gm}^h
+
\alpha\ipd{\nabs\vec p,\nabs\vec p}_{\Gm}+\lambda_A\ipd{\theta_A^m\vec\nu^m,~\vec p}_{\Gm}^h + \lambda_V\ipd{\vec\nu^m,~\vec p}_{\Gm}^h.
\end{equation}
Let $\rho_A^h,\rho_V^h\in\mathbb V_0^m$ be defined by their nodal values as
\[
\rho_A^h(\vec q)=\theta_A^m(\vec q),\qquad
\rho_V^h(\vec q)=1
\quad\text{for }\vec q\in\mathcal Q_{\rm f}^m,
\]
with both functions set to zero at the boundary vertices.  Since $\vec p$
also vanishes at the boundary vertices, choosing $\rho^h=\rho_A^h$ and
$\rho^h=\rho_V^h$ in \eqref{eq:fdw-hom-pnormal} cancels the area and volume
multiplier terms, respectively.  Hence
\[
\ipd{P_{\Gamma^m}\vec p,\vec p}_{\Gm}^h
+
\alpha\norm{\nabs\vec p}_{L^2(\Gm)}^2
=0.
\]
By the same nonnegativity argument as in the proof of
Theorem~\ref{thm:fd-willmore-solvability}, Lemma~\ref{lem:a0} then gives
$\vec p=\vec0$.

It remains to determine $\lambda_A$ and $\lambda_V$, which satisfy 
\[
\ipd{(\lambda_A\theta_A^m+\lambda_V)\vec\nu^m,\vec\chi^h}_{\Gm}^h=0
\qquad\forall\,\vec\chi^h\in[\mathbb V_0^m]^d.
\]
Set $\zeta^h=\lambda_A\theta_A^m+\lambda_V$ and choose $\vec\chi^h$ such that
\begin{equation}
\vec\chi^h(\vec q)=\left\{\begin{array}{ll}
\zeta^h(\vec q)\vec\nu_{h,p}^m(\vec q)\qquad &\mbox{if}\quad\vec q\in\mathcal Q_{\rm f}^m;\\[0.4em]
\vec 0\qquad &\mbox{otherwise}
\end{array}\right.
\end{equation}
By \eqref{eq:vpiden} and assumptions~\ref{assumpI} and \ref{assumpIII}, this
implies
\[
\lambda_A\theta_A^m(\vec q)+\lambda_V=0
\qquad\forall\,\vec q\in\mathcal Q_{\rm f}^m,
\]
which gives $\lambda_A=0$, and hence $\lambda_V=0$.  Thus the homogeneous system
has only the trivial solution.
\end{proof}
\begin{rem}
If assumption~\ref{assumpB} is violated, i.e., no such pair of free vertices
exists, then the nodal vectors
$\bigl(\theta_A^m(\vec q)\bigr)_{\vec q\in\mathcal Q_{\rm f}^m}$ and
$\bigl(1\bigr)_{\vec q\in\mathcal Q_{\rm f}^m}$ are linearly dependent.
Consequently, the discrete area and volume constraints are linearly
dependent.  We may then set either $\lambda_A^{m+1}$ or
$\lambda_V^{m+1}$ to zero.
\end{rem}

For later reference, we denote the fully discrete bending energy at time
$t_m$ by
\begin{equation}\label{eq:fd-bending-energy}
\E_h^m
:=
\frac12\ipd{|\vec k^m|^2,1}_{\Gamma^m}^h.
\end{equation}

Unlike the semidiscrete formulation in
Theorem~\ref{prop:semidiscrete-energy}, an unconditional fully discrete
energy estimate for the proposed scheme does not appear to be possible.

\section{Numerical results}\label{sec:numerics}

In this section, we first describe the solution strategies for the two linear systems arising from the fully discrete schemes \eqref{eqn:fd-willmore} for Willmore flow and \eqref{eqn:fd-helfrich} for Helfrich flow. We then present a variety of numerical experiments. Unless stated otherwise, the initial surface is closed, the spontaneous curvature is a constant, and the relaxed-MDR parameter is set to $\alpha=10$. All simulations are carried out without any heuristic remeshing strategy.

\subsection{Solution strategies}\label{subsec:solution-strategies}

We begin by describing the algebraic structure of the linear systems that
arise at each time step.  Let $\{\vec\phi_i\}_{i=1}^{N_0}$,
$\{\vec\psi_i\}_{i=1}^{N_1}$, and $\{\varphi_i\}_{i=1}^{n_0}$ be bases of
$[\mathbb V_0^m]^d$, $[\mathbb V_1^m]^d$, and $\mathbb V_0^m$,
respectively.  We introduce the matrices
\begin{align*}
(M_\nu)_{ij}
&:=
\ipd{
\vec\phi_j\cdot\vec\nu^m,
\vec\phi_i\cdot\vec\nu^m
}_{\Gamma^m}^h,\qquad C_{ij}
:=
\ipd{P_{\Gamma^m}\vec\phi_j,\vec\phi_i}_{\Gamma^m}^h
+\alpha\ipd{\nabs\vec\phi_j,\nabs\vec\phi_i}_{\Gamma^m},
\\
(M_k)_{ij}
&:=
\ipd{\vec\psi_j,\vec\psi_i}_{\Gamma^m}^h,\qquad 
B_{ij}
:=
\ipd{\nabs\vec\psi_j,\nabs\vec\phi_i}_{\Gamma^m},
\qquad 
D_{ij}
:=
\ipd{\varphi_i\vec\nu^m,\vec\phi_j}_{\Gamma^m}^h.
\end{align*}
The scheme \eqref{eqn:fd-willmore} can then be written in the matrix form
$K_W\mathbf x=\mathbf f$:
\begin{equation}\label{eq:willmore-block-system} 
\begin{pmatrix}
M_\nu &-B&0&C\\
-B^T&-\frac{1}{\ttau}M_k&0&0\\
0&0&0&D\\
C&0&D^T&0
\end{pmatrix}\begin{pmatrix}
\mathbf w\\ \mathbf k\\ \boldsymbol\lambda\\ \mathbf p
\end{pmatrix}=
\begin{pmatrix}
\mathbf f_w\\ \mathbf f_k\\ \mathbf0\\ \mathbf0
\end{pmatrix},
\end{equation}
where $\mathbf w$, $\mathbf k$, $\boldsymbol\lambda$, and $\mathbf p$ are the
coefficient vectors with respect to the chosen bases of the corresponding
finite element spaces.  Multiplying \eqref{eq:fdw-curvature} by
$-1/\ttau$ yields the symmetric matrix $K_W$.  By
Theorem~\ref{thm:fd-willmore-solvability}, this matrix is nonsingular under
the stated geometric assumptions.  In practice, we solve
\eqref{eq:willmore-block-system} using a sparse direct factorization.

For the Helfrich scheme \eqref{eqn:fd-helfrich}, the two additional
Lagrange multipliers lead to the block form
\begin{equation}\label{eq:helfrich-block-system}
\begin{pmatrix}
K_W&Q\\
Q^T&0
\end{pmatrix}
\begin{pmatrix}
\mathbf x\\ \boldsymbol\Lambda
\end{pmatrix}
=
\begin{pmatrix}
\mathbf f\\ \mathbf0
\end{pmatrix},
\qquad
\boldsymbol\Lambda
:=
\begin{pmatrix}\lambda_A^{m+1}\\ \lambda_V^{m+1}\end{pmatrix},
\end{equation}
where $Q$ contains the contributions from the two constraints.  Taking the
Schur complement of \eqref{eq:helfrich-block-system} yields
\begin{align*}
\bigl(Q^{T}K_W^{-1}Q\bigr)\boldsymbol\Lambda
= {}& Q^{T}K_W^{-1}\mathbf f,
\\
\mathbf x
= {}& K_W^{-1}(\mathbf f-Q\boldsymbol\Lambda),
\end{align*}
which can be solved efficiently after factorizing $K_W$.

\subsection{Willmore flow}
\label{subsec:numerical-willmore}

\Example{1}{spherical convergence}

We begin with a convergence experiment for a sphere under Willmore flow.  A
sphere of radius $r(t)$ is an exact solution of \eqref{eq:willmore} provided
that
\begin{equation}\label{eq:Rtime}
r^\prime(t)
=-\frac{\bkap}{r(t)}\left(\frac{2}{r(t)}+\bkap\right),
\qquad r(0)=r_0\in\bR_{>0}.
\end{equation}
For $\bkap\neq0$, set $r(t)=z(t)-\frac{2}{\bkap}$, $z_0=r_0+\frac{2}{\bkap}$, and $\varkappa(t)=-\frac{2}{r(t)}$. Then $z(t)$ is determined implicitly by
\[
\frac12\bigl(z^2(t)-z_0^2\bigr)
-\frac{4}{\bkap}\bigl(z(t)-z_0\bigr)
+\frac{4}{\bkap^2}\ln\frac{z(t)}{z_0}
+\bkap^2t=0.
\]

We choose the initial surface to be the unit sphere centered at the origin,
with $r_0=1$, $\bkap=-1$, and $T=1$.  The sphere expands towards the
stationary sphere of radius $2$.  We initialize the shifted curvature vector
by $\vec k^0=I_h^0\vec k(\cdot,0)=-\vec\id|_{\Gamma^0}$.  At a vertex
$\vec q_k^m\in\mathcal Q^m$, we define the position and shifted-curvature
errors by
\renewcommand{\theHequation}{\theparentequation.\alph{equation}}
\begin{align}
e^m_{\vec u}(\vec q_k^m) = \vec q_k^m - r(t_m)\vec n_k^m,\qquad
e^m_{\vec k}(\vec q_k^m)= \vec k^m(\vec q_k^m) +
\left(\frac{2}{r(t_m)}+\bkap\right)\vec n_k^m,
\end{align}
where $\vec n_k^m:=\vec q_k^m/|\vec q_k^m|$ is the radial direction.  We
measure both the nodal $L^\infty$ errors and the mass-lumped $L^2$ errors over
$[0,T]$:
\begin{subequations}\label{eq:spherical-errors}
\renewcommand{\theHequation}{\theparentequation.\alph{equation}}
\begin{align}
\norm{e_{*}}_{{L^\infty}}
&:=
\max_{1\leq m\leq M}\max_{1\leq k\leq K}
\left|e_{*}^m(\vec q_k^m)
\right|,
\label{eq:spherical-linf}
\\
\norm{e_{*}}_{L^2}
&:=
\max_{1\leq m\leq M}\norm{\vec e_{*}^m}_{L_h^2(\Gamma^m)},
\label{eq:spherical-l2}
\end{align}
\end{subequations}
for $*=\vec u,\vec k$.  Table~\ref{tab:errors} reports the errors and the
corresponding experimental orders of convergence (EOC).  The position errors converge at second order in both norms. The mass-lumped $L^2$ error of the shifted curvature vector also exhibits approximately second-order
convergence, whereas its $L^\infty$ error converges at a lower rate on the finer meshes.

\begin{table*}[!htbp]
\centering
\caption{Errors and experimental orders of convergence for an expanding
sphere under Willmore flow with $\bkap=-1$ and $T=1$.}
\label{tab:errors}
\resizebox{\textwidth}{!}{%
\begin{tabular}{cc@{\qquad}rr@{\quad}rr@{\qquad}rr@{\quad}rr}
\toprule
&
& \multicolumn{4}{c}{Position error}
& \multicolumn{4}{c}{Shifted curvature-vector error} \\
\cmidrule(lr){3-6}\cmidrule(lr){7-10}
$(J,K)$ & $\ttau$
& $\norm{e_{\vec u}}_{L^\infty}$ & EOC
& $\norm{e_{\vec u}}_{L^2}$ & EOC
& $\norm{e_{\vec k}}_{L^\infty}$ & EOC
& $\norm{e_{\vec k}}_{L^2}$ & EOC \\
\midrule
$(80,42)$ & $4.00\mathrm{E}{-2}$
& $1.19\mathrm{E}{-1}$ & -- & $6.12\mathrm{E}{-1}$ & --
& $2.39\mathrm{E}{-1}$ & -- & $8.72\mathrm{E}{-1}$ & -- \\
$(320,162)$ & $1.00\mathrm{E}{-2}$
& $2.68\mathrm{E}{-2}$ & 2.15 & $1.35\mathrm{E}{-1}$ & 2.18
& $6.50\mathrm{E}{-2}$ & 1.88 & $2.14\mathrm{E}{-1}$ & 2.02 \\
$(1280,642)$ & $2.50\mathrm{E}{-3}$
& $6.61\mathrm{E}{-3}$ & 2.02 & $3.29\mathrm{E}{-2}$ & 2.04
& $2.15\mathrm{E}{-2}$ & 1.60 & $5.43\mathrm{E}{-2}$ & 1.98 \\
$(5120,2562)$ & $6.25\mathrm{E}{-4}$
& $1.65\mathrm{E}{-3}$ & 2.00 & $8.17\mathrm{E}{-3}$ & 2.01
& $8.80\mathrm{E}{-3}$ & 1.29 & $1.51\mathrm{E}{-2}$ & 1.85 \\
\bottomrule
\end{tabular}
}
\end{table*}

\Example{2}{tangential motion}

We next investigate how the relaxed-MDR-selected tangential velocity affects
the mesh distribution.
We monitor the triangulation using the mesh-quality indicator
\[
q_{\rm min}^m=\min_{\sigma\in\mathcal T^m} q_\sigma\qquad\mbox{with}\quad q_\sigma = \frac{4\sqrt3\mathcal H^2(\sigma)}
{|\ell_1|^2+|\ell_2|^2+|\ell_3|^2},
\qquad 0<q_\sigma\le1,
\]
where $\ell_1$, $\ell_2$, and $\ell_3$ are the edge vectors of the surface
triangle $\sigma$.  Thus, $q_\sigma=1$ for an equilateral triangle, and
$q_\sigma$ decreases as the triangle degenerates.

We first consider an ellipsoid of dimensions $4\times4\times1$, discretized by
$J=5120$ triangles and $K=2562$ vertices.  We set $\bkap=0$,
$\ttau=10^{-3}$, and $T=1$ and choose $\alpha=0$ and $\alpha=10$ in \eqref{eqn:fd-willmore}. We compares the results with that from the BGN scheme \cite{pwfade} with the same initial mesh and time step.  As shown in
Fig.~\ref{fig:ellipsoid-tangential}, all three computations approach a
nearly spherical surface and produce similar energy histories.  Their
mesh behavior, however, differs markedly.  Without tangential motion, the
minimum element quality decreases, whereas the relaxed-MDR and BGN
tangential motions redistribute the vertices and improve the surface mesh.

\begin{figure}[!htbp]
\centering
\includegraphics[width=0.24\textwidth]{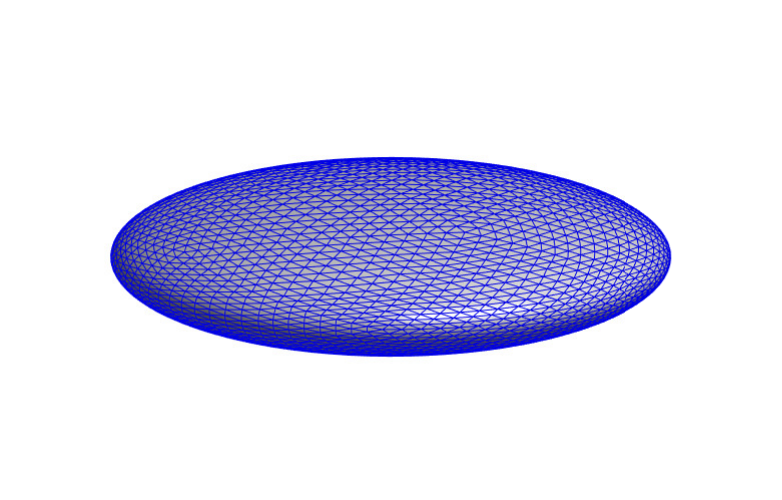}
\includegraphics[width=0.24\textwidth]{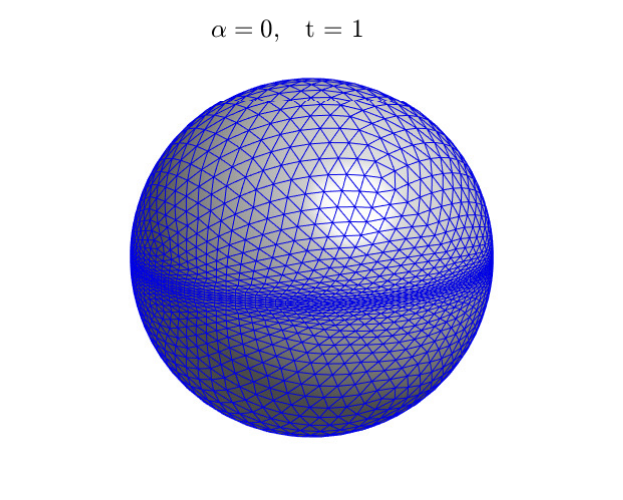}
\includegraphics[width=0.24\textwidth]{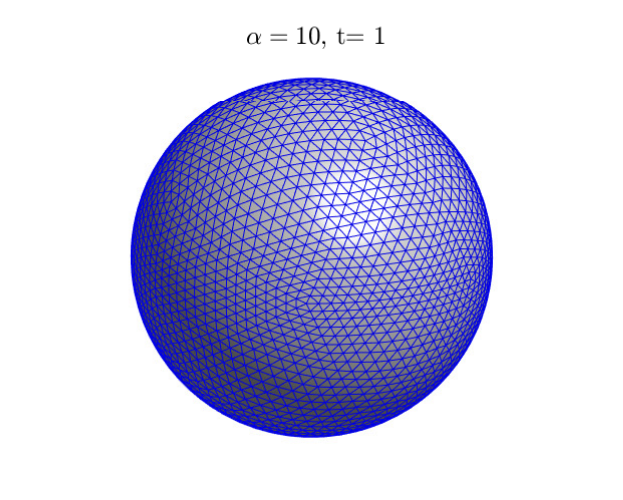}
\includegraphics[width=0.24\textwidth]{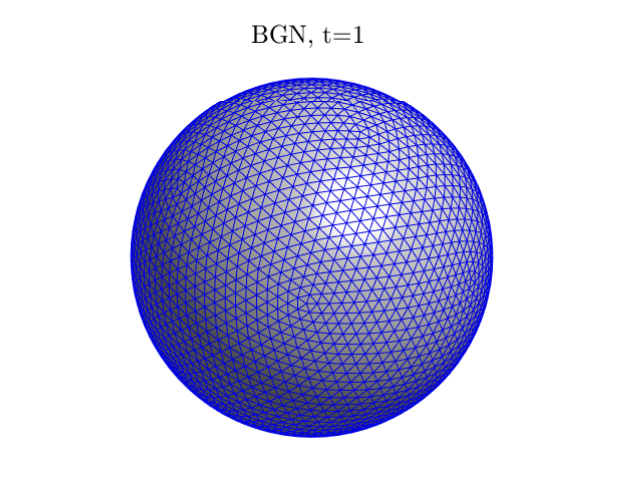}
\includegraphics[width=0.95\textwidth]{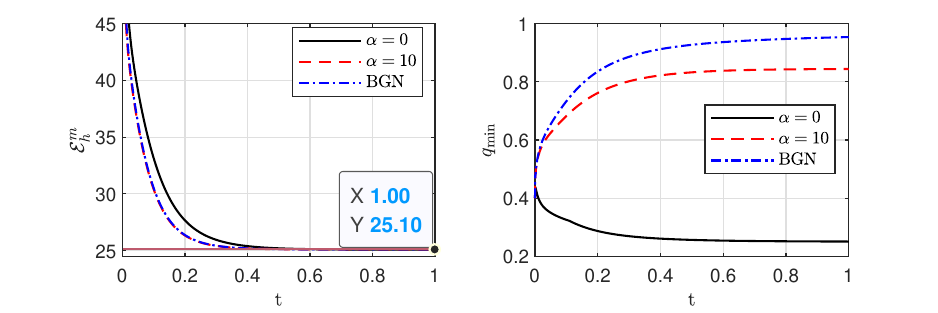}
\caption{Evolution for a $4\times4\times1$ ellipsoid under
Willmore flow with $\bkap=0$, $\ttau=10^{-3}$, and $T=1$.
The upper panels show, from left to right, $\Gamma^0$ and $\Gamma^M$ obtained with $\alpha=0$, $\alpha=10$, and the BGN scheme.  The lower
panels show the discrete bending energy $\E_h^m$ and the minimum mesh quality
$q_{\rm min}^m$.}
\label{fig:ellipsoid-tangential}
\end{figure}

We repeat this comparison for a closed cigar of total length $4$ with two
hemispherical caps of radius $0.5$, so that its full dimensions are
$4\times1\times1$.  The initial mesh consists of $J=5818$ triangles and
$K=2911$ vertices, and we again take $\bkap=0$, $\ttau=10^{-3}$, and $T=1$.
Figure~\ref{fig:cigar-tangential} compares $\alpha=0$ with $\alpha=10$.
The two energy curves are nearly indistinguishable and approach the spherical
Willmore energy $8\pi$.  By contrast, the relaxed-MDR tangential motion
maintains a visibly more uniform mesh and a larger value of
$q_{\rm min}^m$.

\begin{figure}[!htbp]
\centering
\includegraphics[width=0.3\textwidth]{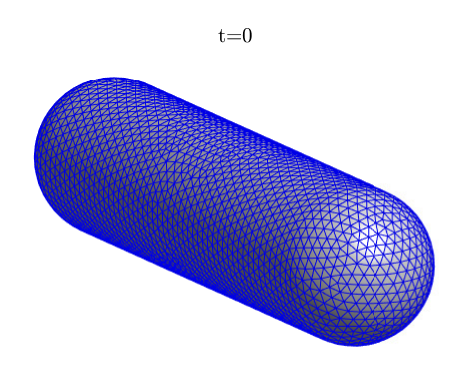}
\includegraphics[width=0.3\textwidth]{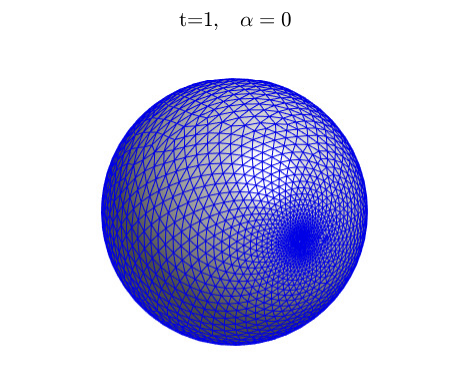}
\includegraphics[width=0.3\textwidth]{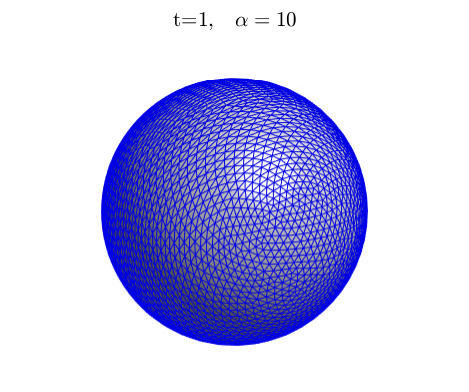}
\includegraphics[width=0.95\textwidth]{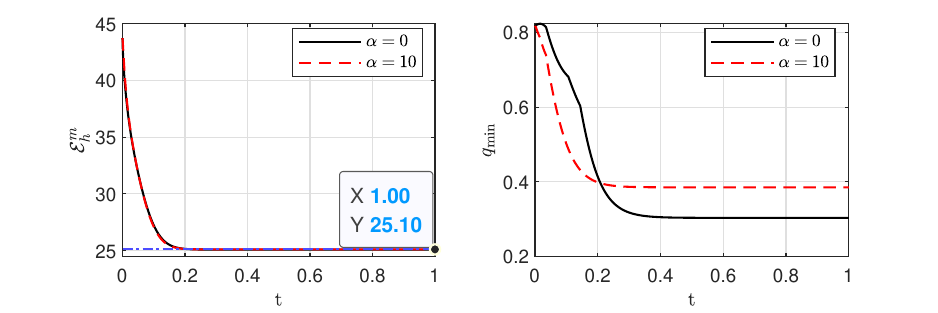}
\caption{Evolution for a $4\times1\times1$ cigar under
Willmore flow with $\bkap=0$, $\ttau=10^{-3}$, and $T=1$.
The upper panels show $\Gamma^0$ (left) and $\Gamma^M$ for
$\alpha=0$ (middle) and $\alpha=10$ (right).  The lower panels show
$\E_h^m$ and $q_{\rm min}^m$; the horizontal reference line in the energy
plot is $8\pi$.}
\label{fig:cigar-tangential}
\end{figure}

We next consider the genus-two experiment studied in
\cite[Fig.~16]{pwfade} and \cite[Fig.~9]{BGN08willmore}.  The initial surface is the boundary of a
$7\times4\times1$ cuboid after removing two $2\times2\times1$ through-holes.
The initial triangulation contains $J=7512$ triangles and $K=3754$ vertices.
We use $\bkap=0$ and $\ttau=10^{-3}$, and run the relaxed-MDR computation
until $T=5$.  The sharp edges are rapidly smoothed while the two holes are
preserved.  On their common time interval, the relaxed-MDR and BGN schemes
produce comparable energy decay.  The BGN mesh, however, deteriorates rapidly,
so that this computation can only be continued to approximately $t=1.2$.
The relaxed-MDR mesh remains nondegenerate up to $T=5$, as shown in
Fig.~\ref{fig:genus-two}.  Here, the final discrete energy is $\E_h^M=43.98$,
remarkably close to the value $44.006$ obtained in \cite{BGN08willmore} on a
much finer mesh with $(J,K)=(79872,39934)$; see \cite{Hsu92,Joshi07} for
related results on genus-two Willmore surfaces.

\begin{figure}[!htbp]
\centering
\includegraphics[width=0.32\textwidth]{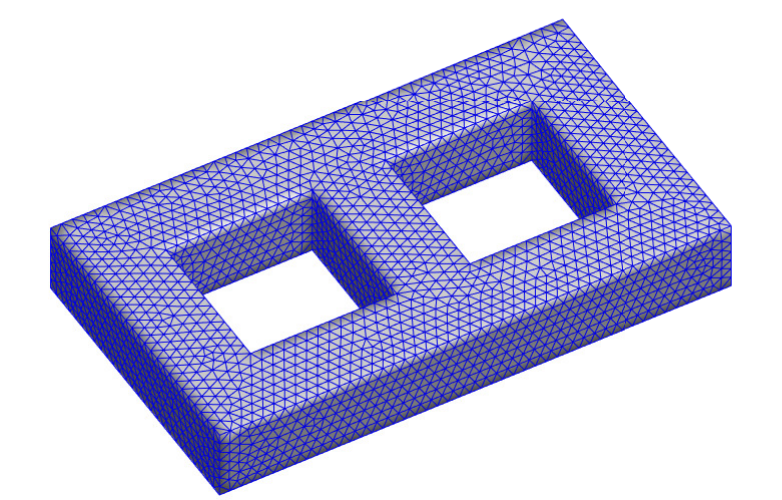}
\includegraphics[width=0.32\textwidth]{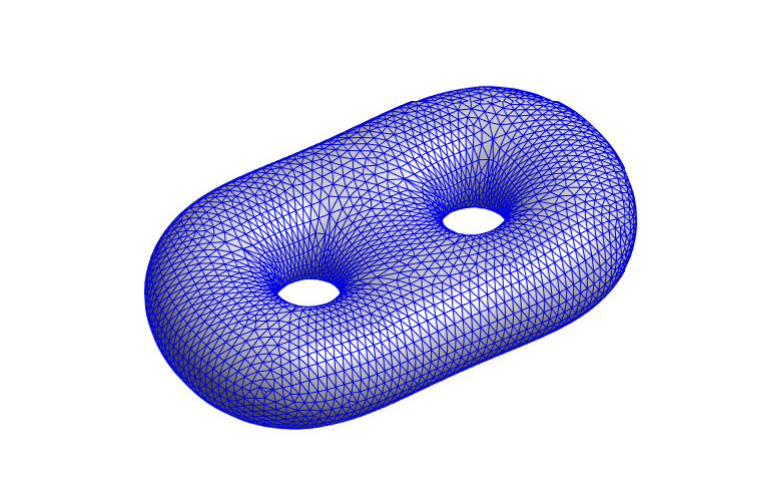}
\includegraphics[width=0.32\textwidth]{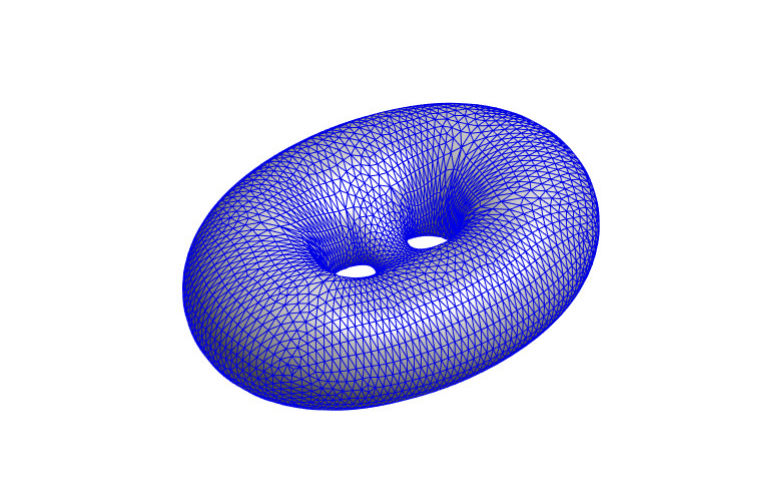}
\includegraphics[width=0.32\textwidth]{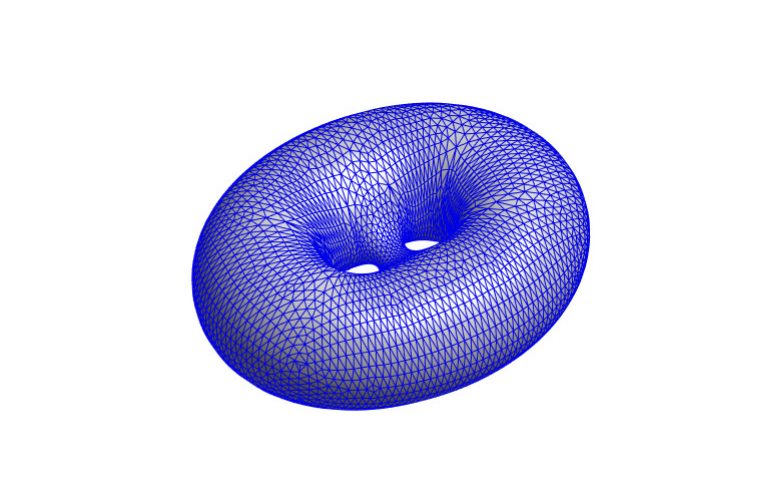}
\includegraphics[width=0.32\textwidth]{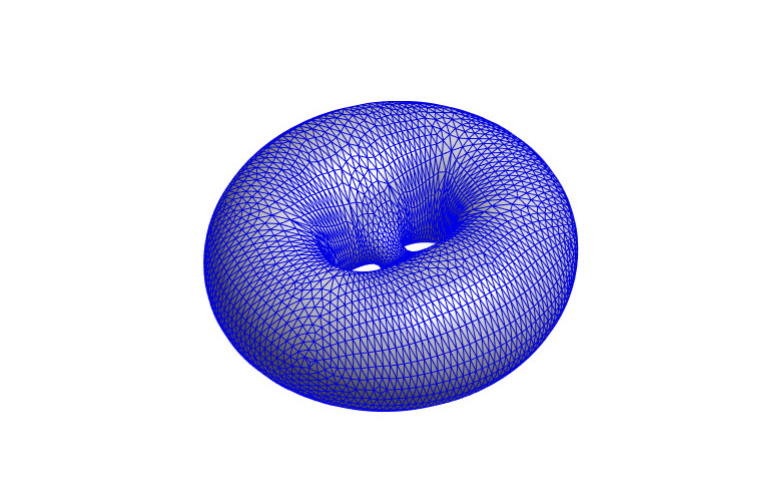}
\includegraphics[width=0.32\textwidth]{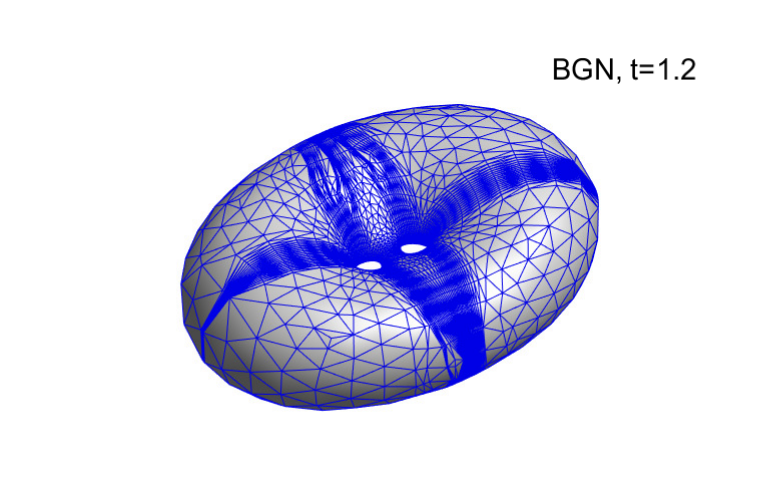}
\includegraphics[width=0.95\textwidth]{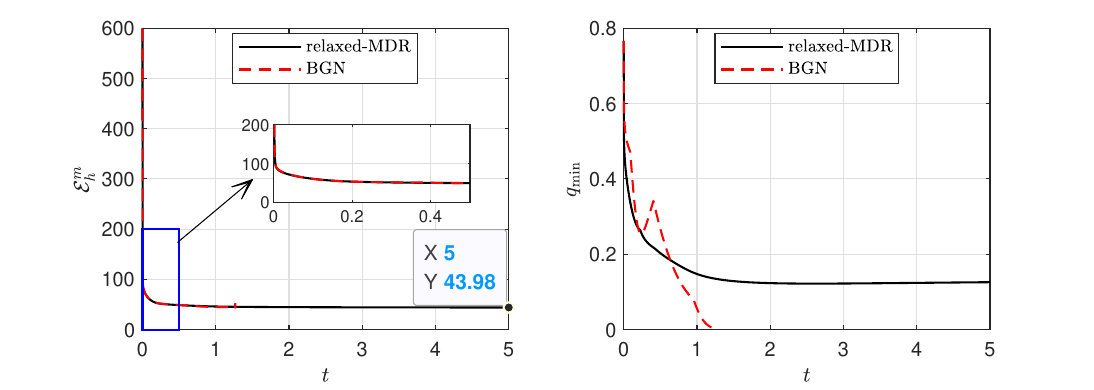}
\caption{Evolution of a genus-two surface under Willmore flow with
$\bkap=0$ and $\ttau=10^{-3}$.  Reading from left to right and top to bottom,
the first five surface panels show the relaxed-MDR solution at
$t=0,0.2,1.2,2$, and $5$; the sixth panel shows the solution at $t=1.2$
computed with the BGN scheme of \cite{pwfade} without additional mesh
regularization.
The lower panels compare $\E_h^m$ and $q_{\rm min}^m$ for the two schemes.}
\label{fig:genus-two}
\end{figure}

As a genus-one example, we consider the torus
\[
\vec u_0(\theta,\varphi)
=
\bigl((2+\cos\varphi)\cos\theta,
      (2+\cos\varphi)\sin\theta,
      \sin\varphi\bigr),
\qquad (\theta,\varphi)\in[0,2\pi)^2.
\]

\begin{figure}[!htbp]
\centering
\includegraphics[width=0.4\textwidth]{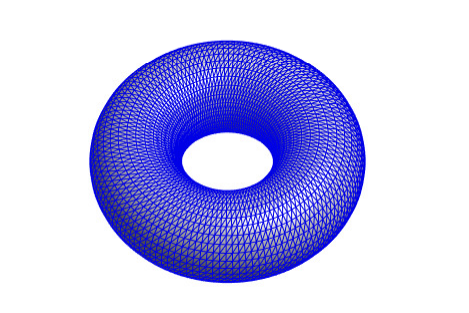}
\includegraphics[width=0.4\textwidth]{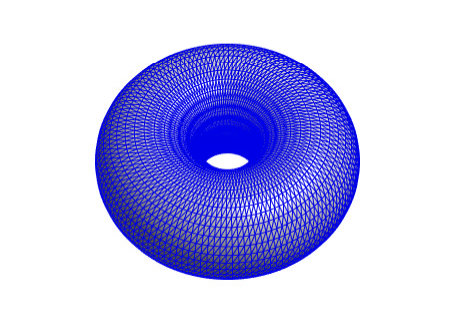}
\includegraphics[width=0.7\textwidth]{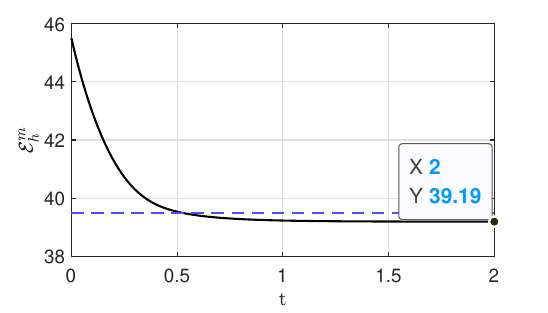}
\caption{Evolution of a torus under Willmore flow, with $\bkap=0$, $\ttau=10^{-3}$, and $T=2$.  The upper panels
show $\Gamma^m$ at $t=0$ and $t=2$, and the lower panel shows $\E_h^m$; the
dashed line marks $4\pi^2$.}
\label{fig:torus}
\end{figure}
The mesh contains $J=8192$ triangles and $K=4096$ vertices.  With
$\bkap=0$, $\ttau=10^{-3}$, and $T=2$, the surface evolves toward a
Clifford torus, while the discrete energy approaches the reference value
$4\pi^2$; see Fig.~\ref{fig:torus}.

\Example{3}{spontaneous curvature effects}

We first examine the effect of a nonzero constant spontaneous curvature by
repeating the cigar experiment from {\bf Example~2} on the same $4\times1\times1$ surface mesh.  We take
$\bkap=-2$,
$\ttau=2\times10^{-4}$, and $T=1$.  The results are reported in Fig.~\ref{fig:cigar-bkap2}.  We then consider the experiment from \cite[Fig.~12]{pwfade}.  The
initial surface is a nonuniform stretched tube of total length $6$, with left
and right spherical-cap radii $1$ and $0.5$, respectively.  We take
$\bkap=-3$, $\ttau=5\times10^{-4}$, and $T=0.3$.  The initial mesh consists
of $J=6020$ triangles and $K=3012$ vertices.  The results are shown in
Fig.~\ref{fig:stretched-bkap3}.  In both examples, the nonzero
spontaneous curvature induces pronounced neck formation and produces several
connected lobes as the discrete bending energy decreases.

\begin{figure}[!htbp]
\centering
\includegraphics[width=0.32\textwidth]{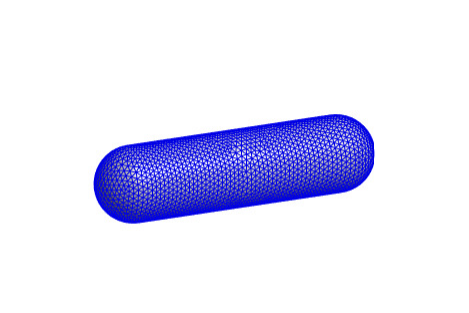}
\includegraphics[width=0.32\textwidth]{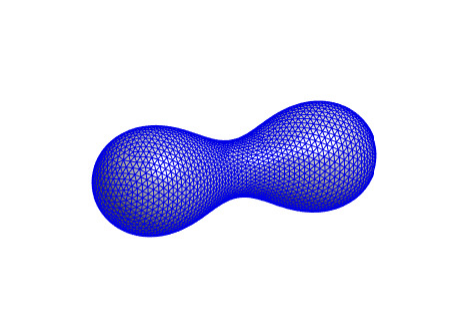}
\includegraphics[width=0.32\textwidth]{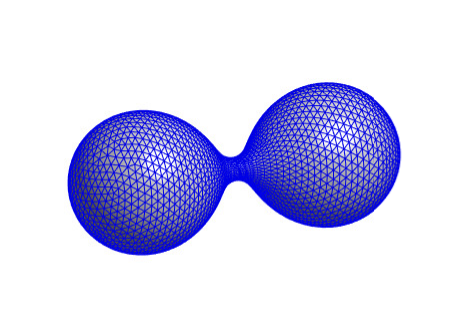}\\
\includegraphics[width=0.7\textwidth]{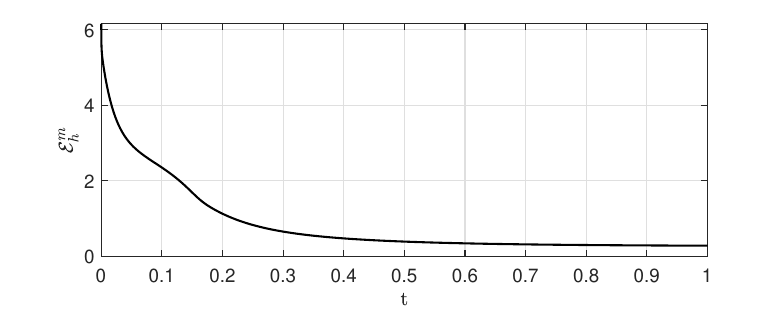}
\caption{Evolution of a $4\times1\times1$ cigar with constant
spontaneous curvature $\bkap=-2$, $\ttau=2\times10^{-4}$, and $T=1$.  The
surface panels show $\Gamma^m$ at $t=0,0.1$, and $1$, and the lower panel
shows the discrete bending energy $\E_h^m$.}
\label{fig:cigar-bkap2}
\end{figure}

\begin{figure}[!htbp]
\centering
\includegraphics[width=0.3\textwidth]{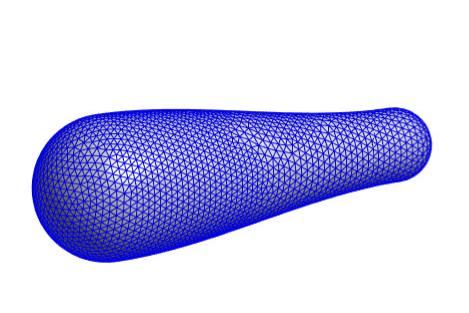}
\includegraphics[width=0.3\textwidth]{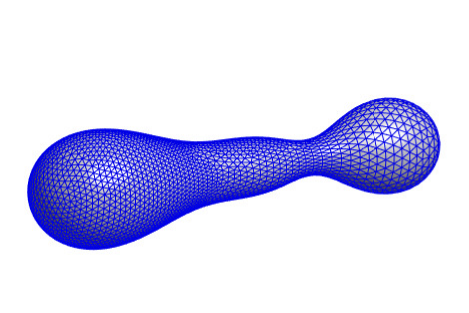}
\includegraphics[width=0.3\textwidth]{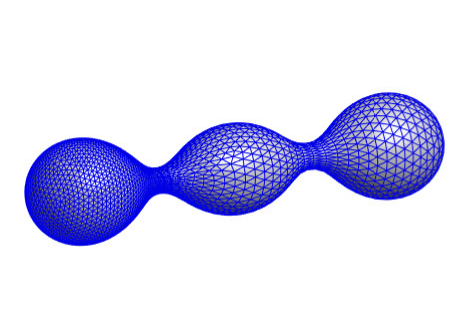}\\
\includegraphics[width=0.6\textwidth]{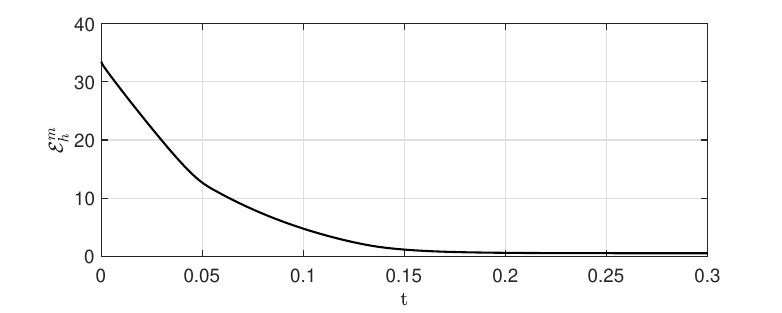}
\caption{Evolution of the nonuniform stretched tube with constant
spontaneous curvature $\bkap=-3$, $\ttau=5\times10^{-4}$, and $T=0.3$.
The surface panels show $\Gamma^m$ at $t=0,0.05$, and $0.3$, and the lower
panel shows $\E_h^m$.}
\label{fig:stretched-bkap3}
\end{figure}

Finally, we consider a unit sphere carrying three localized
spontaneous-curvature patches.  We prescribe 
\begin{equation}\label{eq:three-patch-profile}
\bkap_0(\vec x)
=
-2
-3\sum_{j=0}^2
\exp\left(
-\frac{|\vec x-\vec c_j|^2}{2\sigma^2}
\right),
\qquad
\sigma=0.35,
\end{equation}
where
\begin{equation}\label{eq:three-patch-bkap}
\vec c_j
=
\left(
\cos\frac{2\pi j}{3},
\sin\frac{2\pi j}{3},
0
\right).
\quad j=0,1,2,
\end{equation}
Thus, $\bkap_0\approx-2$ away from the patches and $\bkap_0\approx-5$ at
their centers.  We set $\bkap^0=I_h^0\bkap_0$ and subsequently update
$\bkap^m$ using the discrete transport equation
\eqref{eq:fd-transport-bkap}.  The initial mesh consists of $J=1280$
triangles and $K=642$ vertices, and we take
$\ttau=10^{-4}$ and $T=0.3$. We observe that the  spontaneous curvature breaks spherical symmetry and drives the evolution toward a lower-energy nonspherical state; see
Fig.~\ref{fig:three-patch}.

\begin{figure}[!htbp]
\centering
\includegraphics[width=0.35\textwidth]{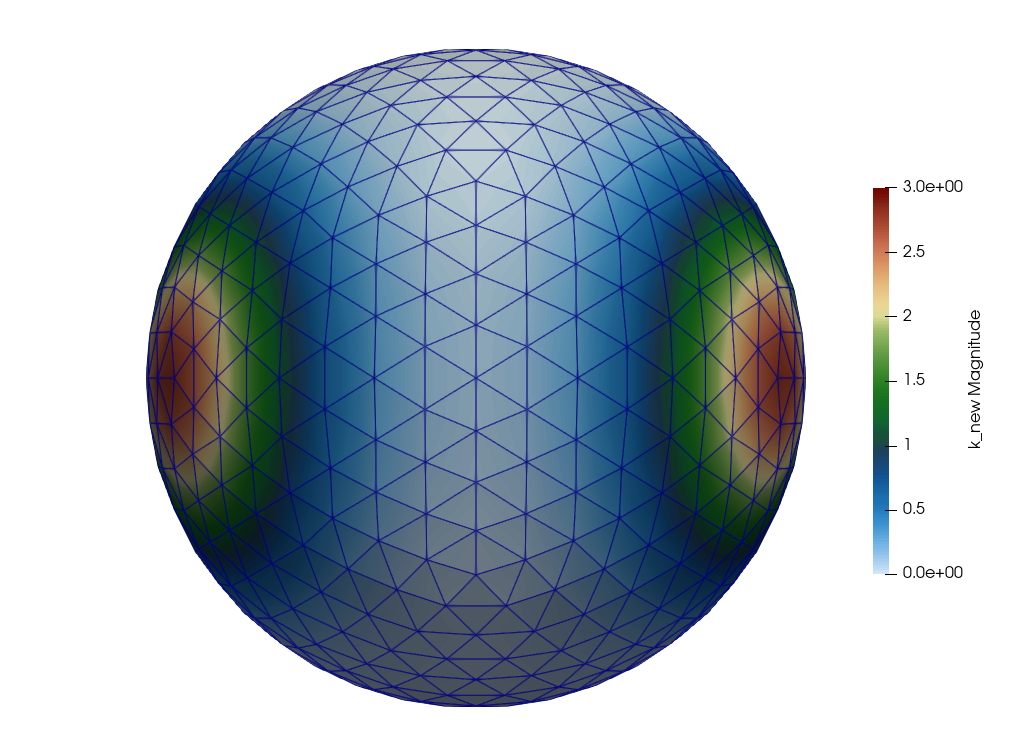}\hspace{0.5cm}
\includegraphics[width=0.35\textwidth]{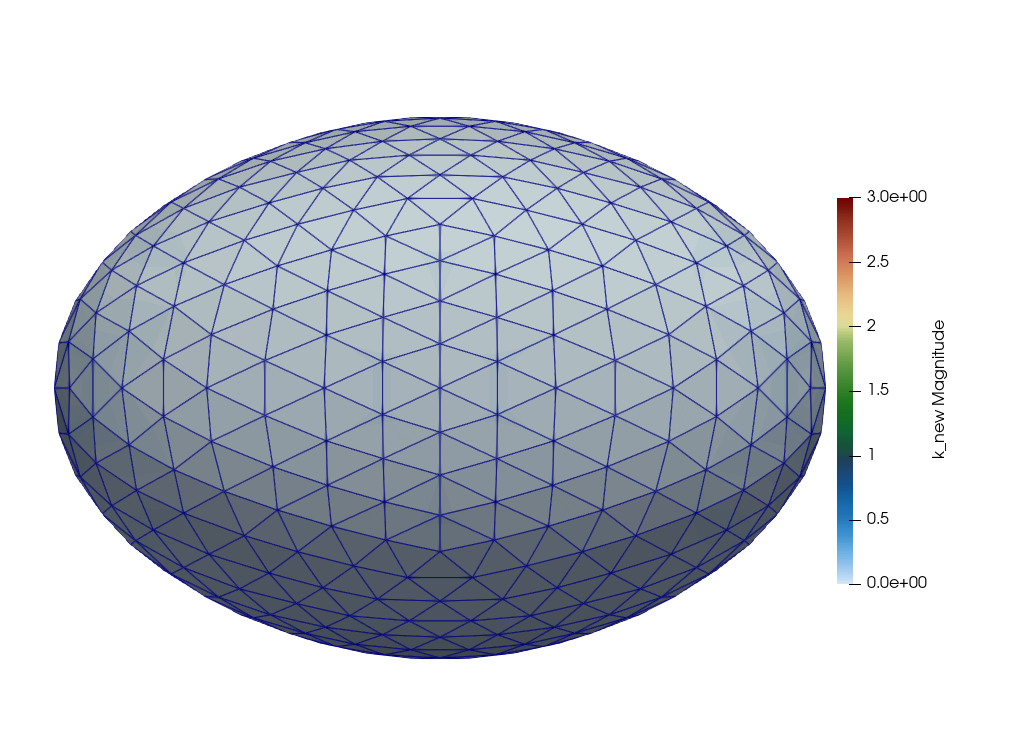}
\includegraphics[width=0.45\textwidth]{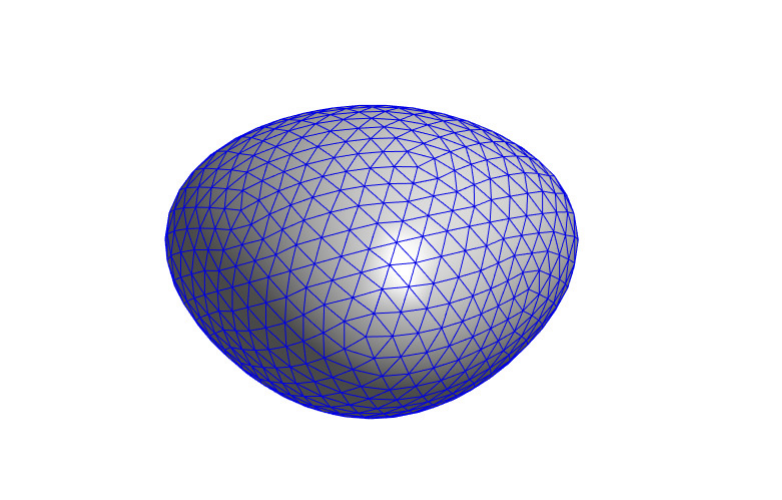}
\includegraphics[width=0.4\textwidth]{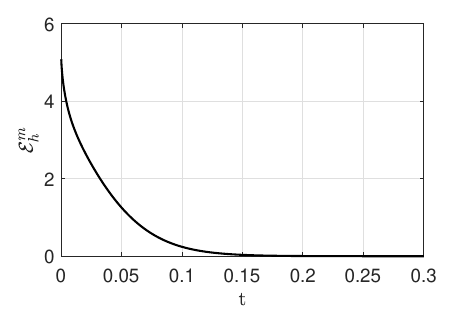}
\caption{Evolution of a unit sphere carrying the three localized
spontaneous-curvature patches defined by
\eqref{eq:three-patch-bkap}--\eqref{eq:three-patch-profile} with $\ttau=10^{-4}$.  The upper
panels show the surface at $t=0$ and $T=0.3$, colored by the magnitude of the
shifted curvature.  The lower panels show $\Gamma^M$ at $T=0.3$ and the time
history of $\E_h^m$.}
\label{fig:three-patch}
\end{figure}

\Example{4}{surface with open boundaries}

We first consider the upper unit hemisphere
\[
\Gamma^0=\{(x,y,z)\in\mathbb R^3:x^2+y^2+z^2=1,\ z\geq0\},
\]
whose boundary is held fixed.  The initial triangulation has
$J=4096$ triangles and $K=2113$ vertices, and we use $\ttau=10^{-3}$.

Under the Navier condition, we take $\bkap=-4$ with $T=0.5$ and
$\bkap=-1$ with $T=1$.  Both surfaces flatten, with the latter approaching a
nearly planar disk with almost zero bending energy; see
Fig.~\ref{fig:open-navier}.  Under the clamped condition, the boundary
co-normal is fixed at its initial value $\vec\mu=(0,0,-1)$.  We take
$\bkap=-4$ with $T=0.5$ and $\bkap=-1.5$ with $T=5$; the corresponding
evolutions are shown in Fig.~\ref{fig:open-clamped}.

\begin{figure}[!htbp]
\centering
\includegraphics[width=0.99\textwidth]{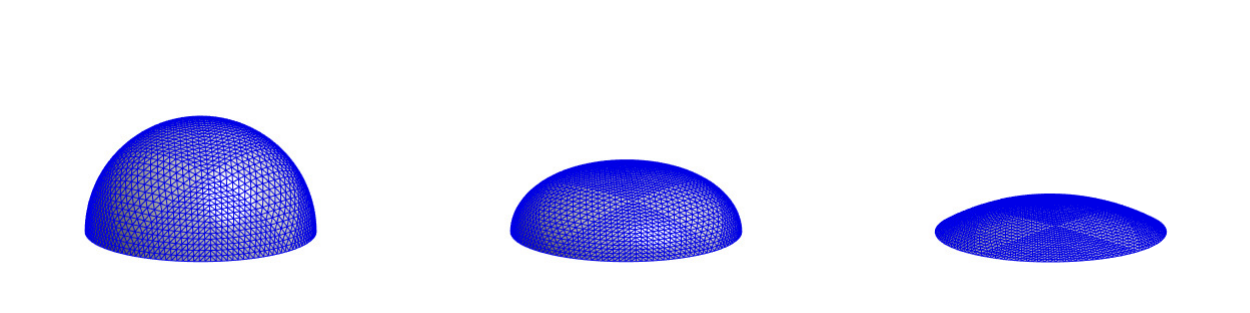}
\includegraphics[width=0.9\textwidth]{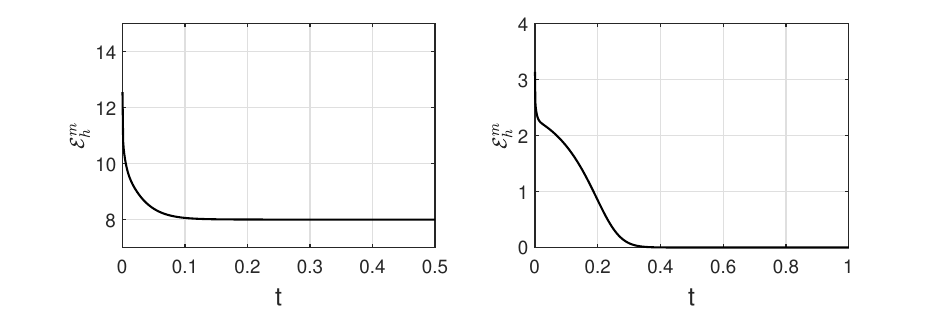}
\caption{Willmore flow of the unit hemisphere with fixed boundary
positions and the Navier boundary condition.  The upper panels show the
initial surface (left), the solution at $t=0.5$ for $\bkap=-4$ (middle), and
the solution at $t=1$ for $\bkap=-1$ (right).  The lower panels show the
corresponding energy histories.  In both cases $\ttau=10^{-3}$.}
\label{fig:open-navier}
\end{figure}

\begin{figure}[!htbp]
\centering
\includegraphics[width=0.99\textwidth]{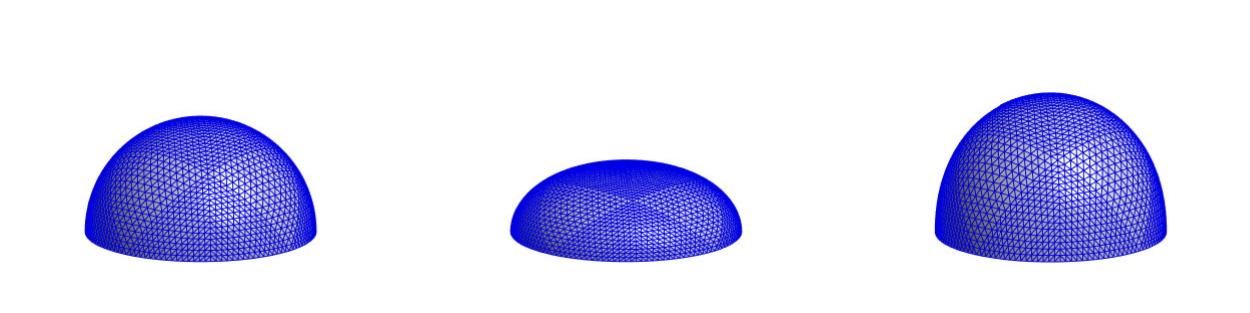}
\includegraphics[width=0.9\textwidth]{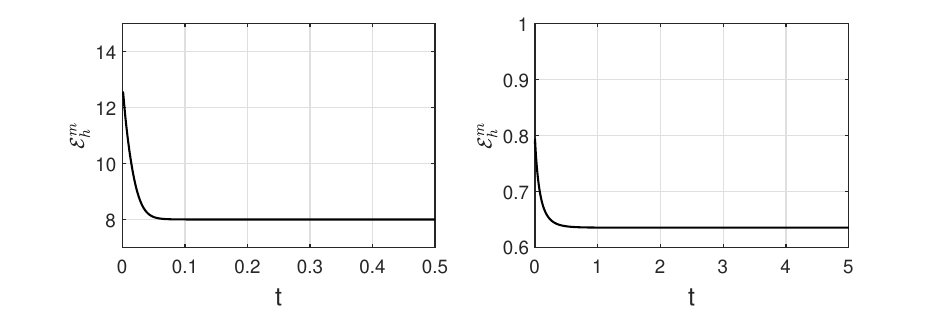}
\caption{Willmore flow of the unit hemisphere with fixed boundary
positions and the clamped boundary condition.  The upper panels show the
initial surface (left), the solution at $t=0.5$ for $\bkap=-4$ (middle), and
the solution at $t=5$ for $\bkap=-1.5$ (right).  The lower panels show the
corresponding energy histories.  In both cases $\ttau=10^{-3}$.}
\label{fig:open-clamped}
\end{figure}

We next consider the catenoid patch parametrized by
\[
\vec u_0(\theta,s)
=
\bigl(\cosh(s)\cos\theta,\cosh(s)\sin\theta,s\bigr),
\qquad (\theta,s)\in[0,2\pi)\times[-1,1].
\]
We impose the Navier condition on the lower boundary $s=-1$ and the clamped
condition on the upper boundary $s=1$.  The transported spontaneous
curvature is prescribed by
\begin{equation}\label{eq:catenoid-bkap}
\bkap_0(\theta,s)=(-2+2\cos\theta)(1-s^2).
\end{equation}
The initial mesh has $J=1600$ triangles and $K=850$ vertices.  We take
$\ttau=5\times10^{-4}$ and $T=2$.  This asymmetric profile breaks rotational
symmetry while the two boundary circles remain fixed.  The energy decreases
rapidly before settling to an approximately constant value; see
Fig.~\ref{fig:catenoid-mixed}.

\begin{figure}[!htbp]
\centering
\includegraphics[width=0.4\textwidth]{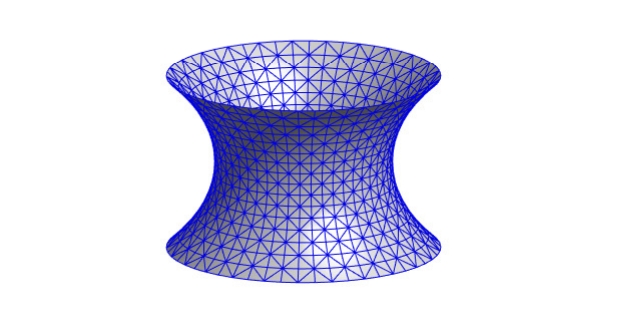}
\includegraphics[width=0.4\textwidth]{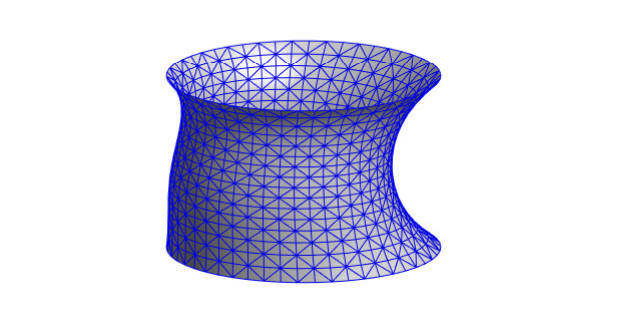}
\includegraphics[width=0.9\textwidth]{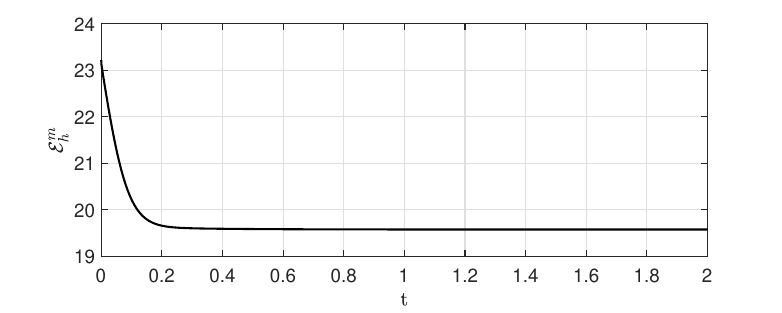}
\caption{Evolution of the catenoid patch with a Navier condition on the
lower boundary, a clamped condition on the upper boundary, and transported
spontaneous curvature given by \eqref{eq:catenoid-bkap}.  The
surface panels show $\Gamma^m$ at $t=0$ and $T=2$, and the lower panel shows
$\E_h^m$.  Here $\ttau=5\times10^{-4}$.}
\label{fig:catenoid-mixed}
\end{figure}

\subsection{Volume-preserving Willmore and Helfrich flows}
\label{subsec:numerical-helfrich}

To quantify the preservation of the geometric constraints, we define the
relative area and volume losses
\[
e_A^m
:=
\frac{|\Gamma^0|-|\Gamma^m|}{|\Gamma^0|},
\qquad
e_V^m
:=
\frac{\vol(\Gamma^0)-\vol(\Gamma^m)}{\vol(\Gamma^0)}.
\]
With this convention, positive values indicate a loss of area or volume,
whereas negative values indicate a gain.

\Example{5}{volume-preserving Willmore flow}

We first impose only the volume constraint on the nonuniform stretched tube
from {\bf Example~3}.  We take $\bkap=-3$,
$\ttau=2\times10^{-4}$, and $T=0.2$.  Figure~\ref{fig:stretched-volume}
shows that neck formation persists under the volume constraint, while the
relative volume loss remains small throughout the evolution.

\begin{figure}[!htbp]
\centering
\includegraphics[width=0.3\textwidth]{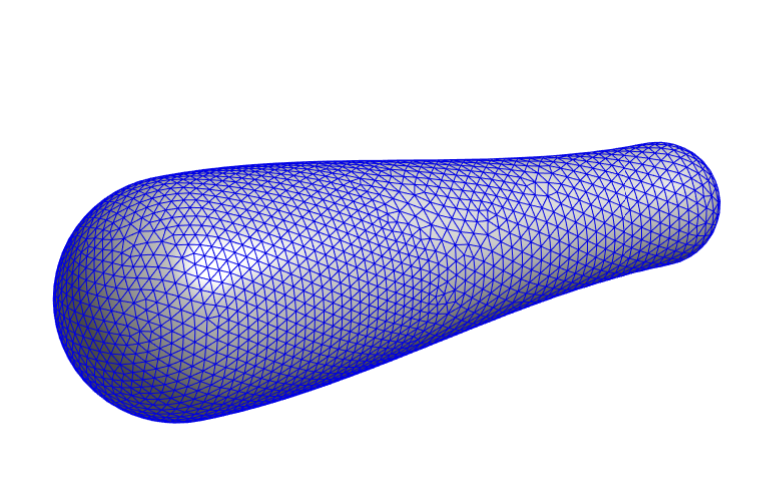}
\includegraphics[width=0.3\textwidth]{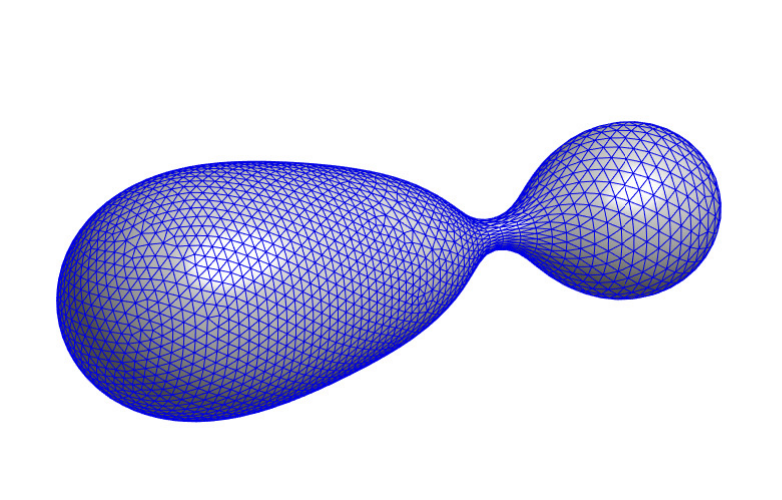}
\includegraphics[width=0.3\textwidth]{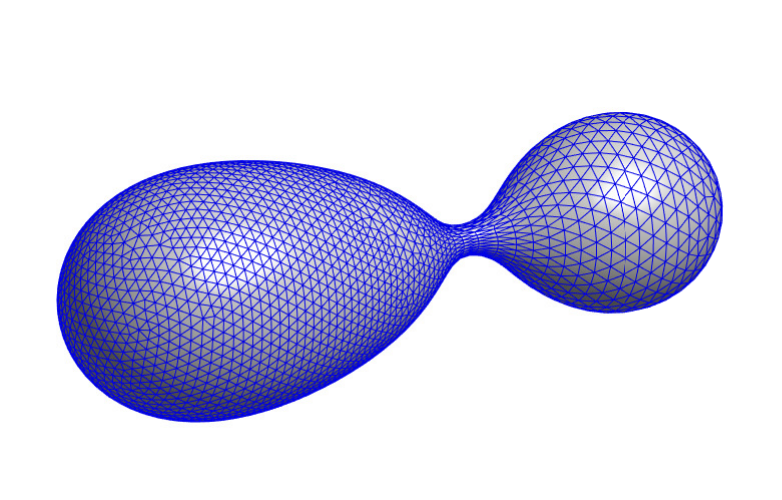}
\includegraphics[width=0.9\textwidth]{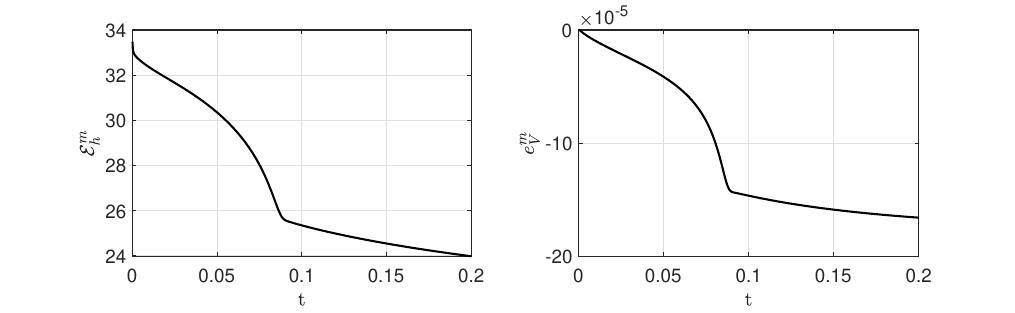}
\caption{Volume-preserving Willmore flow of the nonuniform stretched tube
with $\bkap=-3$, $\ttau=2\times10^{-4}$, and $T=0.2$.  The upper panels show
$\Gamma^m$ at $t=0,0.05$, and $0.1$.  The lower panels show $\E_h^m$ and the
relative volume loss $e_V^m$.}
\label{fig:stretched-volume}
\end{figure}

\Example{6}{Helfrich convergence on a Clifford torus}

We next test the convergence of the Helfrich scheme using the Clifford torus
\[
\vec u_0(\theta,\varphi)
=
\bigl((\sqrt2+\cos\varphi)\cos\theta,
      (\sqrt2+\cos\varphi)\sin\theta,
      \sin\varphi\bigr),
\qquad (\theta,\varphi)\in[0,2\pi)^2.
\]
With $\bkap=0$ and both the area and volume constraints imposed, this surface
is stationary.  Its exact area, enclosed volume, and bending energy are
\[
|\Gamma|=4\sqrt2\pi^2,
\qquad
\vol(\Gamma)=2\sqrt2\pi^2,
\qquad
\E=4\pi^2.
\]
For a vertex $\vec q=(q_x,q_y,q_z)^T\in\mathcal Q^m$,
we define the geometric and energy errors by
\begin{equation*}
e_\Gamma^m
:=
\max_{\vec q\in\mathcal Q^m}
\left|
\sqrt{
\left(\sqrt{(q_x)^2+(q_y)^2}-\sqrt2\right)^2
+(q_z)^2
}-1
\right|,
\qquad 
e_E^m
:=\frac{|\E_h^m-4\pi^2|}{4\pi^2}.
\end{equation*}
The numerical errors and convergence orders are reported in Table~\ref{tab:clifford-errors}. Both the geometric errors and the relative energy errors exhibit approximately second-order convergence, while the relative violations of the two geometric constraints decrease as the spatial and temporal discretization parameters are refined.

\begin{table*}[!htbp]
\centering
\caption{Errors and experimental orders of convergence for the stationary
Clifford torus under Helfrich flow with $\bkap=0$ and $T=1$.}
\label{tab:clifford-errors}
\resizebox{\textwidth}{!}{%
\begin{tabular}{cc@{\qquad}rr@{\qquad}rr@{\qquad}rr}
\toprule
&
& \multicolumn{2}{c}{Geometric error}
& \multicolumn{2}{c}{Relative energy error}
& \multicolumn{2}{c}{Constraint errors} \\
\cmidrule(lr){3-4}\cmidrule(lr){5-6}\cmidrule(lr){7-8}
$(J,K)$ & $\ttau$
& $\max_m e_\Gamma^m$ & EOC
& $\max_m e_E^m$ & EOC
& $\max_m|e_A^m|$ & $\max_m|e_V^m|$ \\
\midrule
$(256,128)$ & $4.00\mathrm{E}{-2}$
& $5.12\mathrm{E}{-2}$ & --
& $9.02\mathrm{E}{-2}$ & --
& $4.54\mathrm{E}{-5}$ & $7.83\mathrm{E}{-5}$ \\
$(1024,512)$ & $1.00\mathrm{E}{-2}$
& $1.45\mathrm{E}{-2}$ & 1.81
& $2.59\mathrm{E}{-2}$ & 1.80
& $1.35\mathrm{E}{-6}$ & $2.68\mathrm{E}{-6}$ \\
$(4096,2048)$ & $2.50\mathrm{E}{-3}$
& $3.60\mathrm{E}{-3}$ & 2.02
& $6.68\mathrm{E}{-3}$ & 1.95
& $2.59\mathrm{E}{-8}$ & $5.11\mathrm{E}{-8}$ \\
\bottomrule
\end{tabular}
}
\end{table*}

\Example{7}{Helfrich flow on an ellipsoid}

We next consider an
oblate ellipsoid of dimensions $4\times4\times1$, whose
initial mesh contains $J=5120$ triangles and $K=2562$ vertices. We take $\bkap=0$, $\ttau=10^{-3}$ and $T=0.2$.  The
ellipsoid evolves toward a biconcave equilibrium while the relative area and
volume losses remain small; see Fig.~\ref{fig:helfrich-ellipsoid}.

\begin{figure}[!htbp]
\centering
\includegraphics[width=0.3\textwidth]{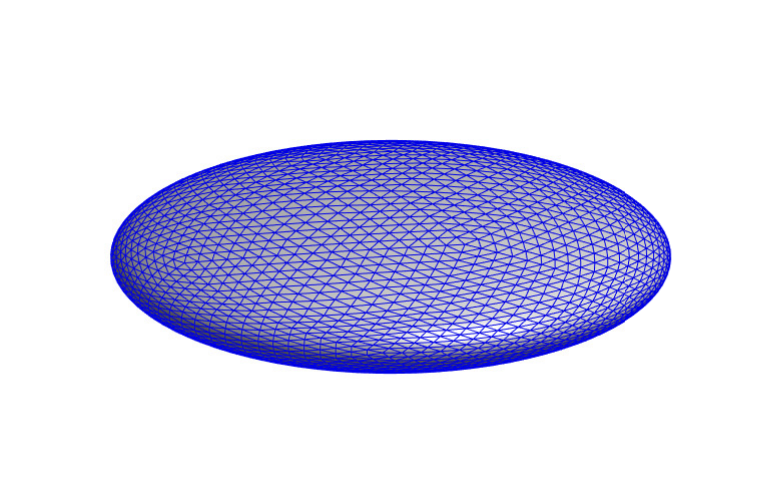}
\includegraphics[width=0.3\textwidth]{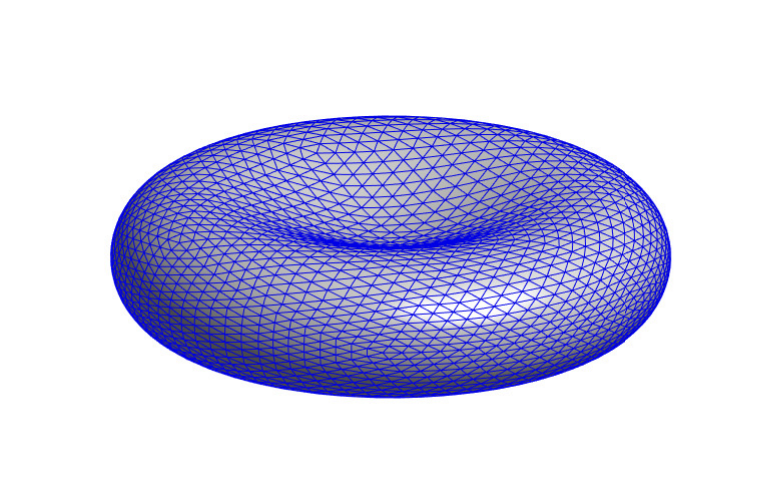}
\includegraphics[width=0.3\textwidth]{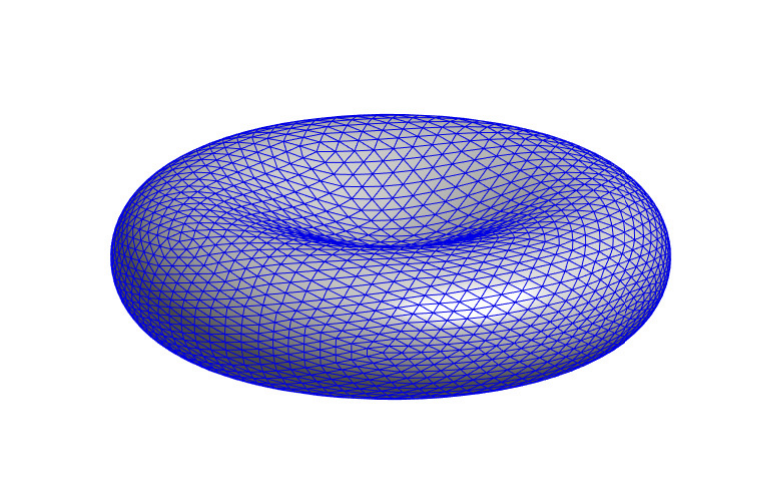}
\includegraphics[width=0.9\textwidth]{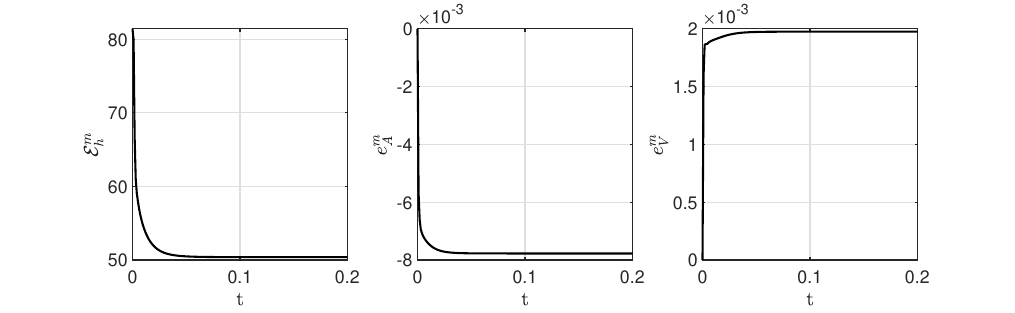}
\caption{Helfrich flow of a $4\times4\times1$ ellipsoid with $\bkap=0$,
$\ttau=10^{-3}$, and $T=0.2$.
The upper panels show $\Gamma^m$ at $t=0,0.05$, and $0.2$.  The lower panels
show $\E_h^m$, $e_A^m$, and $e_V^m$.}
\label{fig:helfrich-ellipsoid}
\end{figure}

On the same ellipsoid and mesh, let $\rho=(x^2+y^2)^{1/2}$ and define the
azimuthal angle $u$ through
\[
\cos u=\frac{x}{\rho}.
\]
We consider the family of spontaneous-curvature profiles
\begin{equation}\label{eq:angular-bkap}
\bkap_0(x,y,z)
=
-2+A\left(\frac{\rho}{2}\right)^n\cos(nu).
\end{equation}
Appropriate choices of the mode number $n$ and amplitude $A$ generate the
desired angular anisotropy.  We take $(n,A)=(3,1.5)$ for the threefold
profile and $(n,A)=(7,1)$ and $(7,2)$ for the two sevenfold profiles.  In all
three cases, $\ttau=10^{-4}$, while the respective final times are
$T=0.5$, $0.25$, and $0.75$.
Figures~\ref{fig:helfrich-threefold}--\ref{fig:helfrich-sevenfold-a2} show how
these profiles influence the large-scale morphology.  For the threefold
profile, the initially oblate ellipsoid develops three pronounced protrusions
and evolves into a three-armed morphology.  For $A=1$, the
sevenfold pattern induced by the spontaneous curvature is retained throughout
the simulation.  For $A=2$, however, it is only transient and successively
gives way to five- and three-armed morphologies.  This symmetry reduction
resembles the instability among nearly degenerate starfish shapes reported in
\cite[Fig.~4]{wintz96starfish}, although here it is driven by spatially
varying spontaneous curvature from the same initial geometry.  

\begin{figure}[!htbp]
\centering
\includegraphics[width=0.24\textwidth]{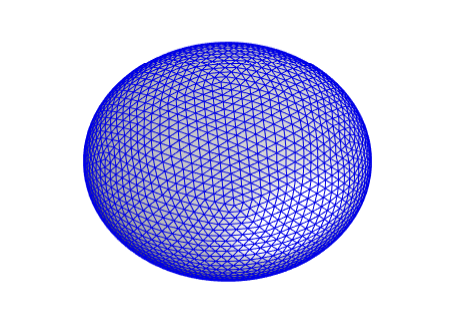}
\includegraphics[width=0.24\textwidth]{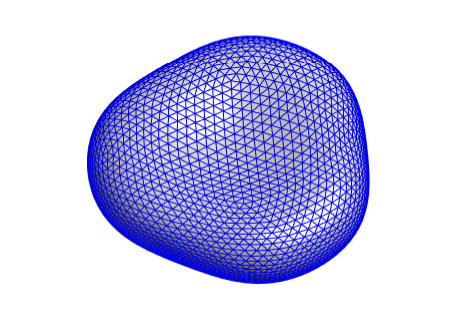}
\includegraphics[width=0.24\textwidth]{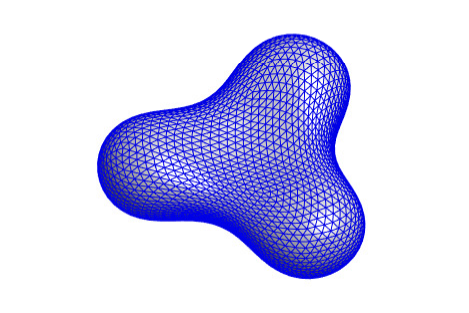}
\includegraphics[width=0.24\textwidth]{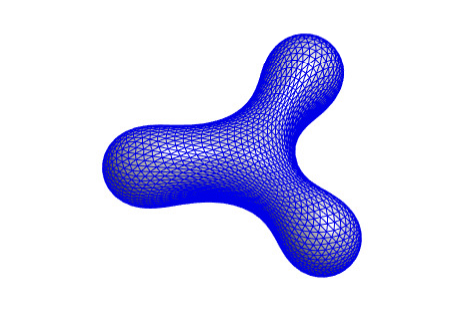}
\includegraphics[width=0.9\textwidth]{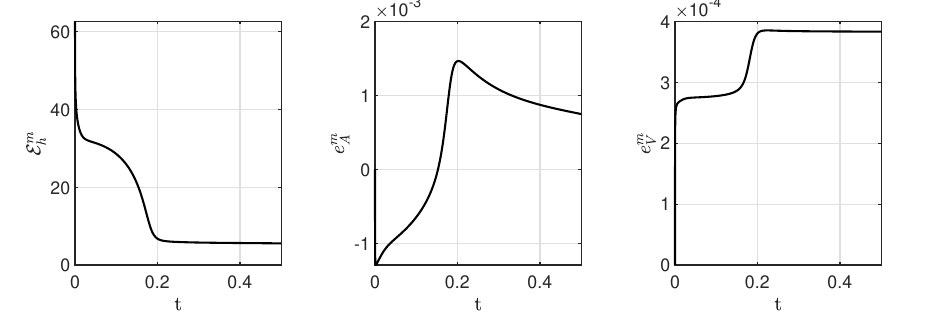}
\caption{Helfrich flow under the threefold spontaneous-curvature profile \eqref{eq:angular-bkap} with $(n,A)=(3,1.5)$, $\ttau=10^{-4}$, and $T=0.5$.
The upper panels show $\Gamma^m$ at $t=0,0.05,0.15$, and $0.5$.  The lower
panels show $\E_h^m$, $e_A^m$, and $e_V^m$.}
\label{fig:helfrich-threefold}
\end{figure}

\begin{figure}[!htbp]
\centering
\includegraphics[width=0.24\textwidth]{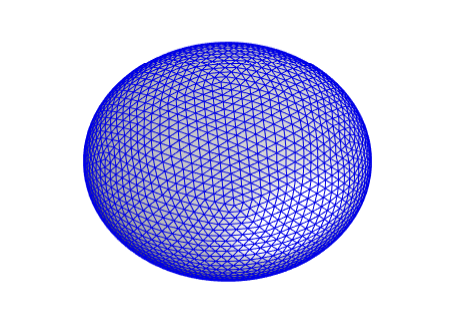}
\includegraphics[width=0.24\textwidth]{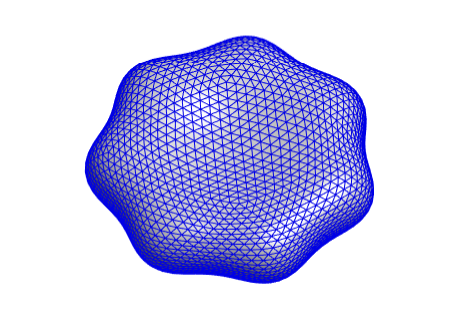}
\includegraphics[width=0.24\textwidth]{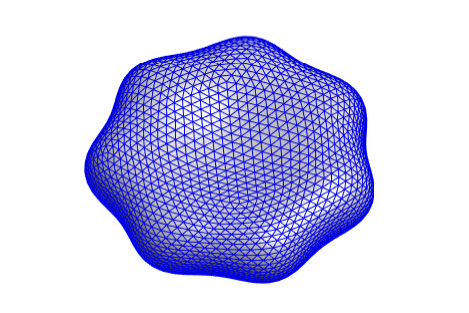}
\includegraphics[width=0.24\textwidth]{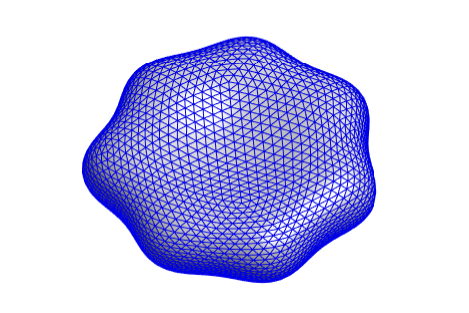}
\includegraphics[width=0.9\textwidth]{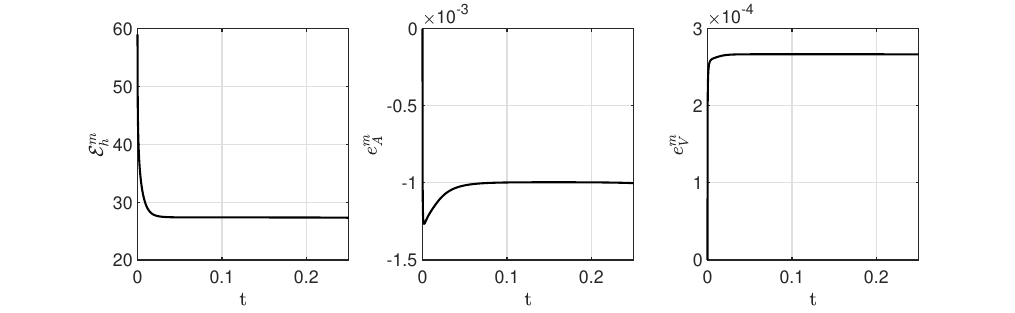}
\caption{Helfrich flow under the sevenfold spontaneous-curvature profile \eqref{eq:angular-bkap} with $(n,A)=(7,1)$, $\ttau=10^{-4}$, and $T=0.25$.
The upper panels show $\Gamma^m$ at $t=0,0.05,0.15$, and $0.25$.  The lower
panels show $\E_h^m$, $e_A^m$, and $e_V^m$.}
\label{fig:helfrich-sevenfold-a1}
\end{figure}

\begin{figure}[!htbp]
\centering
\includegraphics[width=0.3\textwidth]{fig/Hepsoid/Hepsoid7foldA1t00}
\includegraphics[width=0.3\textwidth]{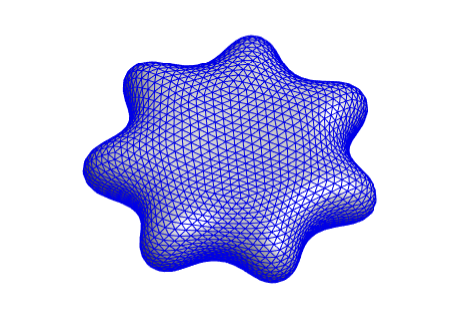}
\includegraphics[width=0.3\textwidth]{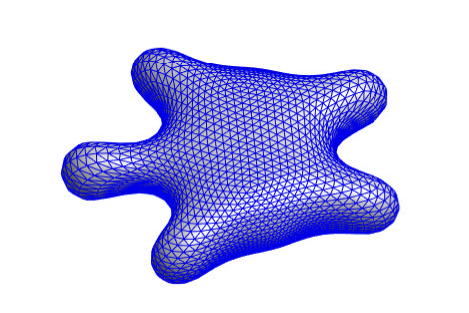} \\
\includegraphics[width=0.3\textwidth]{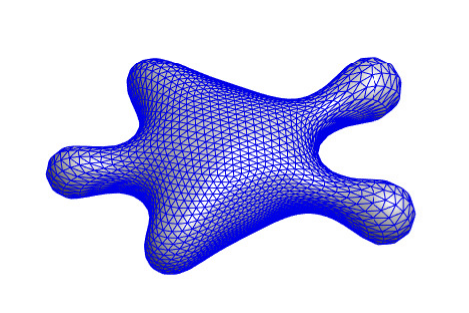}
\includegraphics[width=0.3\textwidth]{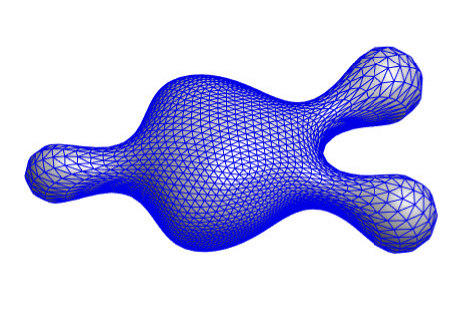}
\includegraphics[width=0.3\textwidth]{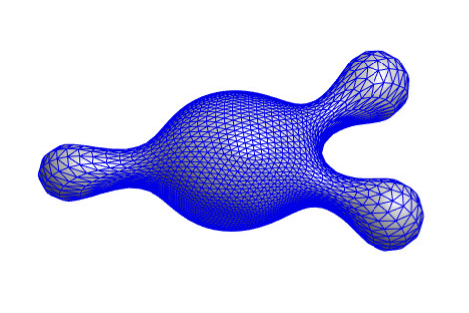}
\includegraphics[width=0.9\textwidth]{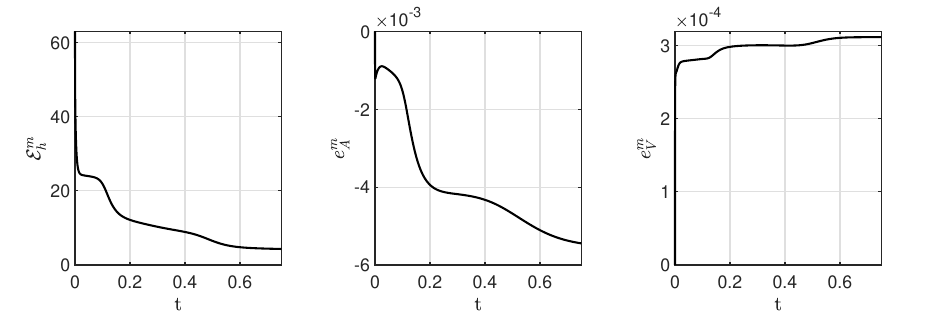}
\caption{Helfrich flow under the sevenfold spontaneous-curvature profile \eqref{eq:angular-bkap} with $(n,A)=(7,2)$, $\ttau=10^{-4}$, and $T=0.75$.
The surface panels show $\Gamma^m$ at $t=0,0.05,0.15,0.35,0.5$, and $0.75$.
The lower panels show $\E_h^m$, $e_A^m$, and $e_V^m$. }
\label{fig:helfrich-sevenfold-a2}
\end{figure}

\FloatBarrier
\Example{8}{Helfrich flow of a rounded flat plate}

Finally, we consider a rounded flat plate of dimensions
$4.8\times5.2\times1$ and use an initial mesh with $J=5120$ triangles and
$K=2562$ vertices.  We set $\bkap=-2$,
$\ttau=10^{-4}$, and $T=0.25$.  As shown in Fig.~\ref{fig:helfrich-flat-plate}, the slight in-plane asymmetry selects the
orientation of the four emerging lobes.  Similar multilobed morphologies were
reported in \cite{wintz96starfish}.  The time histories further demonstrate
energy decay and accurate preservation of the discrete area and volume.

\begin{figure}[!htbp]
\centering
\includegraphics[width=0.24\textwidth]{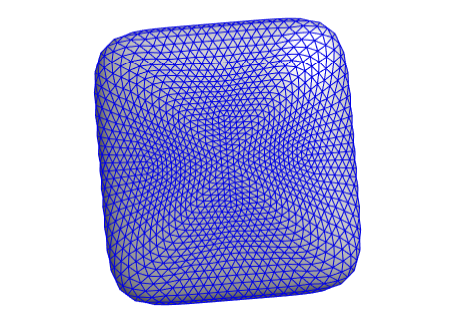}
\includegraphics[width=0.24\textwidth]{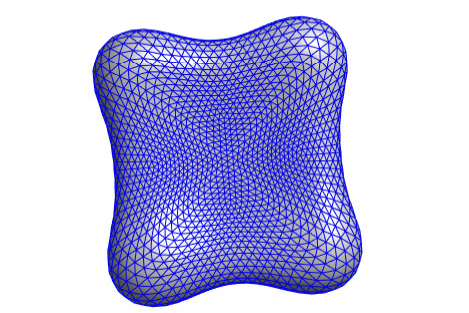}
\includegraphics[width=0.24\textwidth]{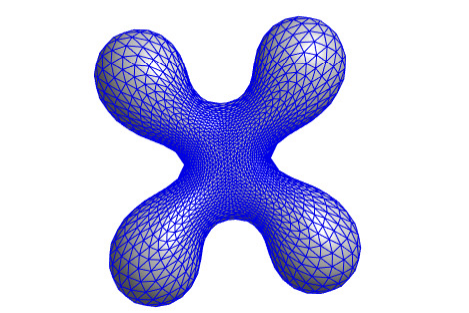}
\includegraphics[width=0.24\textwidth]{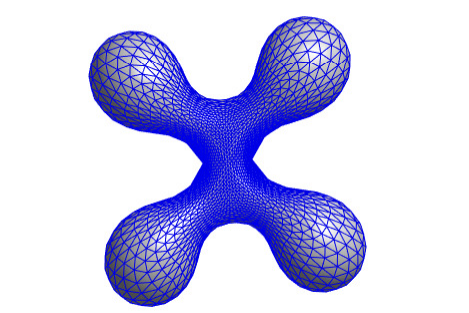}
\includegraphics[width=0.9\textwidth]{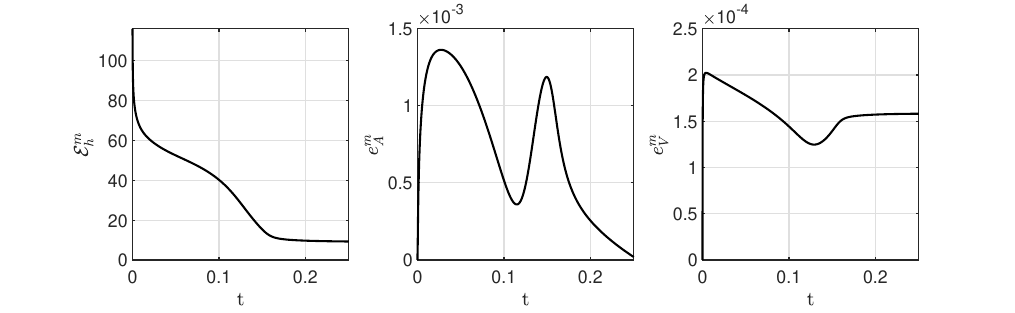}
\caption{Helfrich flow of the rounded
$4.8\times5.2\times1$ flat plate with $\bkap=-2$, $\ttau=10^{-4}$, and
$T=0.25$.  The upper panels show $\Gamma^m$ at $t=0,0.05,0.15$, and $0.25$.
The lower panels show $\E_h^m$, $e_A^m$, and $e_V^m$.}
\label{fig:helfrich-flat-plate}
\end{figure}

\section{Concluding remarks}
\label{sec:con}

We have developed a two-stage constrained Onsager formulation for parametric
approximations of Willmore and Helfrich flows, in which the dissipation, geometric constraints, and tangential mesh redistribution are treated within a unified velocity-based framework.  A weak curvature PDE constraint
first represents the bending-energy variation in curvature-vector form.
Onsager's principle then determines the geometric evolution through
normal-velocity dissipation, while the relaxed-MDR condition independently
selects the tangential motion.  This separation allows the strength of mesh
redistribution to be adjusted without changing the normal gradient-flow
structure.

The framework yields a continuous piecewise linear semidiscretization
that satisfies an exact energy-dissipation identity and exactly preserves area and volume when the corresponding constraints are imposed.  It also accommodates transported,
spatially varying spontaneous curvature and open surfaces with fixed
boundaries under Navier or clamped conditions.  The proposed linearly
implicit time discretizations lead to mixed linear systems that are uniquely
solvable under geometric nondegeneracy assumptions.  The numerical results
corroborate convergence and energy decay, quantify the preservation of the
geometric constraints, and demonstrate effective mesh control for large
deformations, complex topology, and open-surface evolutions.

Establishing unconditional energy stability based on the proposed weak formulation remains open and will be our further study. 

\section*{Acknowledgements}

This work was partially supported by the National Natural Science Foundation of China
(No. 12401572, Q.Z.).

\bibliographystyle{abbrv}
\bibliography{bib}

\begin{thebibliography}{10}

\bibitem{BalzaniR12}
N.~Balzani and M.~Rumpf.
\newblock A nested variational time discretization for parametric {W}illmore
  flow.
\newblock {\em Interfaces Free Bound.}, 14(4):431--454, 2012.

\bibitem{BaoL25}
W.~Bao and Y.~Li.
\newblock An energy-stable parametric finite element method for the planar
  {W}illmore flow.
\newblock {\em SIAM J. Numer. Anal.}, 63(1):103--121, 2025.

\bibitem{BGN07}
J.~W. Barrett, H.~Garcke, and R.~N{\"u}rnberg.
\newblock A parametric finite element method for fourth order geometric
  evolution equations.
\newblock {\em J. Comput. Phys.}, 222(1):441--467, 2007.

\bibitem{BGN08parametric}
J.~W. Barrett, H.~Garcke, and R.~N{\"u}rnberg.
\newblock On the parametric finite element approximation of evolving
  hypersurfaces in $\mathbb{R}^3$.
\newblock {\em J. Comput. Phys.}, 227(9):4281--4307, 2008.

\bibitem{BGN08willmore}
J.~W. Barrett, H.~Garcke, and R.~N{\"u}rnberg.
\newblock Parametric approximation of {W}illmore flow and related geometric
  evolution equations.
\newblock {\em SIAM J. Sci. Comput.}, 31(1):225--253, 2008.

\bibitem{pwfade}
J.~W. Barrett, H.~Garcke, and R.~N{\"u}rnberg.
\newblock Computational parametric {W}illmore flow with spontaneous curvature
  and area difference elasticity effects.
\newblock {\em SIAM J. Numer. Anal.}, 54(3):1732--1762, 2016.

\bibitem{pwfopen}
J.~W. Barrett, H.~Garcke, and R.~N{\"u}rnberg.
\newblock Stable variational approximations of boundary value problems for
  {Willmore} flow with {Gaussian} curvature.
\newblock {\em IMA J. Numer. Anal.}, 37(4):1657--1709, 2017.

\bibitem{Barrett20}
J.~W. Barrett, H.~Garcke, and R.~N{\"u}rnberg.
\newblock Parametric finite element approximations of curvature-driven
  interface evolutions.
\newblock In A.~Bonito and R.~H. Nochetto, editors, {\em Handbook of Numerical
  Analysis}, volume~21, pages 275--423. Elsevier, Amsterdam, 2020.

\bibitem{Blatt09singular}
S.~Blatt.
\newblock A singular example for the {W}illmore flow.
\newblock {\em Analysis (Munich)}, 29(4):407--430, 2009.

\bibitem{BONITO2010}
A.~Bonito, R.~H. Nochetto, and M.~S. Pauletti.
\newblock Parametric {FEM} for geometric biomembranes.
\newblock {\em J. Comput. Phys.}, 229(9):3171--3188, 2010.

\bibitem{Bretin15phase}
E.~Bretin, S.~Masnou, and E.~Oudet.
\newblock Phase-field approximations of the {Willmore} functional and flow.
\newblock {\em Numer. Math.}, 131:115--171, 2015.

\bibitem{Canham1970minimum}
P.~B. Canham.
\newblock The minimum energy of bending as a possible explanation of the
  biconcave shape of the human red blood cell.
\newblock {\em J. Theor. Biol.}, 26(1):61--81, 1970.

\bibitem{ClarenzDDRR04}
U.~Clarenz, U.~Diewald, G.~Dziuk, M.~Rumpf, and R.~Rusu.
\newblock A finite element method for surface restoration with smooth boundary
  conditions.
\newblock {\em Comput. Aided Geom. Design}, 21(5):427--445, 2004.

\bibitem{contri26}
A.~Contri, E.~A. Francis, A.~Massing, and P.~Rangamani.
\newblock Mechanochemical feedback between cell shape and intracellular
  mechanics revealed by a finite-element framework.
\newblock {bioRxiv} preprint 2026.07.03.736361, 2026.

\bibitem{Deckelnick09error}
K.~Deckelnick and G.~Dziuk.
\newblock Error analysis for the elastic flow of parametrized curves.
\newblock {\em Math. Comp}, 78(266):645--671, 2009.

\bibitem{Doi11}
M.~Doi.
\newblock {Onsager}'s variational principle in soft matter.
\newblock {\em J. Phys.: Condens. Matter}, 23(28):284118, 2011.

\bibitem{Doi15}
M.~Doi.
\newblock Onsager principle as a tool for approximation.
\newblock {\em Chinese Phys. B}, 24(2):020505, 2015.

\bibitem{Droske04level}
M.~Droske and M.~Rumpf.
\newblock A level set formulation for {Willmore} flow.
\newblock {\em Interfaces Free Bound.}, 6(3):361--378, 2004.

\bibitem{Du04phase}
Q.~Du, C.~Liu, and X.~Wang.
\newblock A phase field approach in the numerical study of the elastic bending
  energy for vesicle membranes.
\newblock {\em J. Comput. Phys.}, 198(2):450--468, 2004.

\bibitem{Duan24new}
B.~Duan and B.~Li.
\newblock New artificial tangential motions for parametric finite element
  approximation of surface evolution.
\newblock {\em SIAM J. Sci. Comput.}, 46(1):A587--A608, 2024.

\bibitem{Duan2021high}
B.~Duan, B.~Li, and Z.~Zhang.
\newblock High-order fully discrete energy diminishing evolving surface finite
  element methods for a class of geometric curvature flows.
\newblock {\em Ann. Appl. Math.}, 37(4):405--436, 2021.

\bibitem{Dziuk08}
G.~Dziuk.
\newblock Computational parametric {W}illmore flow.
\newblock {\em Numer. Math.}, 111(1):55--80, 2008.

\bibitem{DziukKS02}
G.~Dziuk, E.~Kuwert, and R.~Sch{\"a}tzle.
\newblock Evolution of elastic curves in {${\mathbb R}^n$}: {E}xistence and
  computation.
\newblock {\em SIAM J. Math. Anal.}, 33(5):1228--1245, 2002.

\bibitem{FrankenRW13}
M.~Franken, M.~Rumpf, and B.~Wirth.
\newblock A phase field based {PDE} constrained optimization approach to time
  discrete {W}illmore flow.
\newblock {\em Int. J. Numer. Anal. Model.}, 10(1):116--138, 2013.

\bibitem{Gao26}
G.~Gao, H.~Garcke, B.~Li, and R.~Tang.
\newblock An energy-stable minimal deformation rate scheme for mean curvature
  flow and surface diffusion.
\newblock {\em SIAM J. Sci. Comput.}, 48(1):A103--A131, 2026.

\bibitem{Gao26dualMDR}
G.~Gao, B.~Li, and R.~Tang.
\newblock Dual formulations of geometric curvature flows and their
  discretizations.
\newblock arXiv: 2604.18288, 2026.

\bibitem{GNZ25willmore}
H.~Garcke, R.~N{\"u}rnberg, and Q.~Zhao.
\newblock Stable fully discrete finite element methods with {BGN} tangential
  motion for {Willmore} flow of planar curves.
\newblock {\em J. Sci. Comput.}, 105(2):45, 2025.

\bibitem{GNZ26}
H.~Garcke, R.~N\"{u}rnberg, and Q.~Zhao.
\newblock An energy-stable parametric finite element method for {Willmore} flow
  with normal-tangential velocity splitting.
\newblock {\em SIAM J. Sci. Comput.}, 48(3):A1235--A1259, 2026.

\bibitem{Helfrich73elastic}
W.~Helfrich.
\newblock Elastic properties of lipid bilayers: theory and possible
  experiments.
\newblock {\em Z. Naturforsch. C}, 28(11-12):693--703, 1973.

\bibitem{Hsu92}
L.~Hsu, R.~Kusner, and J.~Sullivan.
\newblock {Minimizing the squared mean curvature integral for surfaces in space
  forms}.
\newblock {\em Experiment. Math.}, 1(3):191--207, 1992.

\bibitem{Hu22evolving}
J.~Hu and B.~Li.
\newblock Evolving finite element methods with an artificial tangential
  velocity for mean curvature flow and {W}illmore flow.
\newblock {\em Numer. Math.}, 152(1):127--181, 2022.

\bibitem{JiangZQSB19}
W.~Jiang, Q.~Zhao, T.~Qian, D.~J. Srolovitz, and W.~Bao.
\newblock Application of {Onsager}'s variational principle to the dynamics of a
  solid toroidal island on a substrate.
\newblock {\em Acta Mater.}, 163:154--160, 2019.

\bibitem{Joshi07}
P.~Joshi and C.~S{\'e}quin.
\newblock Energy minimizers for curvature-based surface functionals.
\newblock {\em Comput. Aided Design Appl.}, 4(5):607--617, 2007.

\bibitem{KovacsLL21}
B.~Kov\'{a}cs, B.~Li, and C.~Lubich.
\newblock A convergent evolving finite element algorithm for {W}illmore flow of
  closed surfaces.
\newblock {\em Numer. Math.}, 149(3):595--643, 2021.

\bibitem{Kuwert01willmore}
E.~Kuwert and R.~Sch{\"a}tzle.
\newblock The {W}illmore flow with small initial energy.
\newblock {\em J. Differ. Geom.}, 57(3):409--441, 2001.

\bibitem{KS02}
E.~Kuwert and R.~Sch{\"a}tzle.
\newblock Gradient flow for the {Willmore} functional.
\newblock {\em Comm. Anal. Geom.}, 10(2):307--339, 2002.

\bibitem{Liu12mesoscale}
J.~Liu, R.~Tourdot, V.~Ramanan, N.~J. Agrawal, and R.~Radhakrishnan.
\newblock Mesoscale simulations of curvature-inducing protein partitioning on
  lipid bilayer membranes in the presence of mean curvature fields.
\newblock {\em Mol. Phys.}, 110(11-12):1127--1137, 2012.

\bibitem{LZ26}
X.~Liu and Q.~Zhao.
\newblock A minimizing-movement framework for geometric gradient flows with
  admissible tangential motion.
\newblock {\em arXiv:2606.18177}, 2026.

\bibitem{LX25}
Y.~Liu and X.~Xu.
\newblock A variational discretization method for mean curvature flows by the
  {Onsager} principle.
\newblock {\em CSIAM Trans. Appl. Math.}, 6(1):63--95, Feb. 2025.

\bibitem{DeTurck17}
C.~M.~Elliott and H.~Fritz.
\newblock On approximations of the curve shortening flow and of the mean
  curvature flow based on the {DeTurck} trick.
\newblock {\em IMA J. Numer. Anal.}, 37(2):543--603, 2017.

\bibitem{Mayer02numerical}
U.~F. Mayer and G.~Simonett.
\newblock A numerical scheme for axisymmetric solutions of curvature-driven
  free boundary problems, with applications to the {W}illmore flow.
\newblock {\em Interfaces Free Bound.}, 4(1):89--109, 2002.

\bibitem{OlischlagerR09}
N.~Olischl{\"a}ger and M.~Rumpf.
\newblock Two step time discretization of {W}illmore flow.
\newblock In E.~R. Hancock, R.~R. Martin, and M.~A. Sabin, editors, {\em
  Mathematics of Surfaces XIII}, pages 278--292, Berlin, 2009. Springer.

\bibitem{Onsager31a}
L.~Onsager.
\newblock Reciprocal relations in irreversible processes. {I}.
\newblock {\em Phys. Rev.}, 37(4):405--426, 1931.

\bibitem{Onsager31b}
L.~Onsager.
\newblock Reciprocal relations in irreversible processes. {II}.
\newblock {\em Phys. Rev.}, 38(12):2265--2279, 1931.

\bibitem{PalmurellaR22}
F.~Palmurella and T.~Rivi\`ere.
\newblock The parametric approach to the {W}illmore flow.
\newblock {\em Adv. Math.}, 400:108257, 2022.

\bibitem{PAN26}
J.~Pan, G.~Dong, H.~Guo, and Z.~Shi.
\newblock When surface evolution meets {Fokker-Planck} equation: A novel
  tangential velocity model for uniform parametrization.
\newblock {\em J. Comput. Phys.}, 549:114604, 2026.

\bibitem{QianWS06}
T.~Qian, X.-P. Wang, and P.~Sheng.
\newblock A variational approach to moving contact line hydrodynamics.
\newblock {\em J. Fluid Mech.}, 564:333--360, 2006.

\bibitem{Rangamani21local}
P.~Rangamani, A.~Behzadan, and M.~Holst.
\newblock Local sensitivity analysis of the “membrane shape equation”
  derived from the {Helfrich} energy.
\newblock {\em Math. Mech. Solids}, 26(3):356--385, 2021.

\bibitem{RumpfSS25preprint}
M.~Rumpf, J.~Sassen, and C.~Smoch.
\newblock A hybrid minimizing movement and neural network approach to
  {W}illmore flow.
\newblock arXiv:2502.14656, 2025.

\bibitem{Rupp23volume}
F.~Rupp.
\newblock The volume-preserving {W}illmore flow.
\newblock {\em Nonlinear Anal.}, 230:113220, 2023.

\bibitem{Rusu05}
R.~E. Rusu.
\newblock An algorithm for the elastic flow of surfaces.
\newblock {\em Interfaces Free Bound.}, 7(3):229--239, 2005.

\bibitem{Schlierf25spont}
M.~Schlierf.
\newblock Spontaneous curvature effects of the {Helfrich} flow: singularities
  and convergence.
\newblock {\em Comm. Partial Diff. Equ.}, 50(3):441--476, 2025.

\bibitem{Seifert97}
U.~Seifert.
\newblock Configurations of fluid membranes and vesicles.
\newblock {\em Adv. Phys.}, 46(1):13--137, 1997.

\bibitem{Simonett05willmore}
G.~Simonett.
\newblock The {W}illmore flow near spheres.
\newblock {\em Differ. Integral Equ.}, 14(8):1005--1014, 2001.

\bibitem{Willmore93}
T.~J. Willmore.
\newblock {\em Riemannian geometry}.
\newblock Oxford University Press, 1993.

\bibitem{wintz96starfish}
W.~Wintz, H.-G. D{\"o}bereiner, and U.~Seifert.
\newblock Starfish vesicles.
\newblock {\em Europhys. Lett.}, 33(5):403--408, 1996.

\bibitem{XuDD16}
X.~Xu, Y.~Di, and M.~Doi.
\newblock Variational method for liquids moving on a substrate.
\newblock {\em Phys. Fluids}, 28(8):087101, 2016.

\bibitem{ZhaoJWSQB24}
Q.~Zhao, W.~Jiang, Y.~Wang, D.~J. Srolovitz, T.~Qian, and W.~Bao.
\newblock Dynamics of small solid particles on substrates of arbitrary
  topography.
\newblock {\em Acta Mater.}, 281:120407, 2024.

\end{thebibliography}

\end{document}